  \theoremstyle{definition}
  \newtheorem{defn}{\protect\definitionname}
 \theoremstyle{definition}
  \newtheorem{example}{\protect\examplename}
\theoremstyle{plain}
\newtheorem{thm}{\protect\theoremname}
  \theoremstyle{remark}
  \newtheorem{rem}{\protect\remarkname}
  \theoremstyle{plain}
  \newtheorem{prop}{\protect\propositionname}
  \providecommand{\definitionname}{Definition}
  \providecommand{\examplename}{Example}
  \providecommand{\propositionname}{Proposition}
  \providecommand{\remarkname}{Remark}
\providecommand{\theoremname}{Theorem}
\begin{document}

\title{Dynamic Rotation and Stretch Tensors \\
from a Dynamic Polar Decomposition\thanks{To appear in the \emph{Journal of the Mechanics and Physics of Solids}} }

\author{George Haller\thanks{Email address: georgehaller@ethz.ch}}

\maketitle
\begin{center}
Institute of Mechanical Systems\\
Department of Mechanical and Process Engineering, \\
ETH Zürich, Leonhardstrasse 21, 8092 Zürich, Switzerland
\par\end{center}
\begin{abstract}
The local rigid-body component of continuum deformation is typically
characterized by the rotation tensor, obtained from the polar decomposition
of the deformation gradient. Beyond its well-known merits, the polar
rotation tensor also has a lesser known dynamical inconsistency: it
does not satisfy the fundamental superposition principle of rigid-body
rotations over adjacent time intervals. As a consequence, the polar
rotation diverts from the observed mean material rotation of fibers
in fluids, and introduces a purely kinematic memory effect into computed
material rotation. Here we derive a generalized polar decomposition
for linear processes that yields a unique, dynamically consistent
rotation component, the dynamic rotation tensor, for the deformation
gradient. The left dynamic stretch tensor is objective, and shares
the principal strain values and axes with its classic polar counterpart.
Unlike its classic polar counterpart, however, the dynamic stretch
tensor evolves in time without spin. The dynamic rotation tensor further
decomposes into a spatially constant mean rotation tensor and a dynamically
consistent relative rotation tensor that is objective for planar deformations.
We also obtain simple expressions for dynamic analogues of Cauchy's
mean rotation angle that characterize a deforming body objectively.
\end{abstract}

\section{Introduction}

In continuum mechanics, the now classic procedure for isolating the
rotational component of the deformation gradient is the polar decomposition.
To describe this decomposition, we consider a deformation field $\mathbf{X}_{t_{0}}^{t}\colon\mathbf{\mathbf{x}}(t_{0})\mapsto\mathbf{\mathbf{x}}(t)$
defined on a spatial domain $\mathcal{B}(t_{0})\subset\mathbb{R}^{3}$
over the time interval $\left[t_{0},t_{1}\right]\subset\mathbb{R}.$
The trajectories $\mathbf{\mathbf{x}}(t)$ depend on the initial time
$t_{0}$ and the initial position $\mathbf{x}_{0}$, but this will
be suppressed for notational simplicity. By the polar decomposition
theorem, the deformation gradient $\mathbf{F}_{\tau}^{t}=\mathbf{\nabla}\mathbf{X}_{\tau}^{t}$
(with $\tau,t\in\left[t_{0},t_{1}\right]$) has unique left and right
decompositions of the form
\begin{equation}
\mathbf{\mathbf{F}}_{\tau}^{t}=\mathbf{R}_{\tau}^{t}\mathbf{U}_{\tau}^{t}=\mathbf{V}_{\tau}^{t}\mathbf{R}_{\tau}^{t},\label{eq:PD}
\end{equation}
with a proper orthogonal matrix $\mathbf{\mathbf{R}}_{\tau}^{t}$
and symmetric, positive definite matrices $\mathbf{\mathbf{U}}_{\tau}^{t}$
and $\mathbf{\mathbf{V}}_{\tau}^{t}$ (Truesdell \& Noll \cite{Truesdell65}).
Although customarily suppressed in their notation, the rotation and
stretch tensors do depend on the time interval $[\tau,t]$. We keep
this dependence in our notation for later purposes. We also emphasize
that we consider general non-autonomous deformation fields for which
the velocity field $\mathbf{\dot{X}}_{t_{0}}^{t}$ may depend explicitly
on the current time $t$, which therefore cannot be set to zero at
arbitrary intermediate configurations for convenience.

In finite-strain theory, the polar rotation tensor $\mathbf{\mathbf{R}}_{\tau}^{t}$
is interpreted as solid-body rotation, while $\mathbf{\mathbf{U}}_{\tau}^{t}$
and $\mathbf{\mathbf{V}}_{\tau}^{t}$ are referred to as right and
left stretch tensors between the times $\tau$ and $t$ (Truesdell
and Noll \cite{Truesdell65}). The tensor $\mathbf{\mathbf{R}}_{\tau}^{t}$
is generally obtained from \eqref{eq:PD} after $\mathbf{U}_{\tau}^{t}$
is computed as the principal square root of the right Cauchy--Green
strain tensor $\mathbf{\mathbf{C}}_{\tau}^{t}=\left(\mathbf{\mathbf{F}}_{\tau}^{t}\right)^{T}\mathbf{\mathbf{F}}_{\tau}^{t}$. 

As Boulanger and Hayes \cite{boulanger} (see also Jaric et al. \cite{jaric06})
point out, there are in fact infinitely many possible rotation-stretch
decompositions of the form $\mathbf{\mathbf{F}}_{\tau}^{t}=\mathbf{\Omega}\mathbf{\Delta}$,
where $\mathbf{\Omega}\in SO(3)$ is a rotation and $\Delta$ is a
non-degenerate tensor whose singular values and singular vectors coincide
with the eigenvalues and eigenvectors of $\mathbf{\mathbf{C}}_{\tau}^{t}$.
Indeed, an infinity of such decompositions can be generated from any
given one by selecting an arbitrary rotation $\mathbf{\mathbf{\Xi}}\in SO(3)$
and letting
\begin{equation}
\mathbf{\mathbf{F}}_{\tau}^{t}=\mathbf{\mathbf{\hat{\Omega}}}\mathbf{\hat{\Delta}},\qquad\mathbf{\mathbf{\mathbf{\hat{\Omega}}}}=\mathbf{\Omega}\mathbf{\Xi},\qquad\mathbf{\hat{\Delta}}=\mathbf{\Xi}^{T}\mathbf{\Delta}.\label{eq:infinitely_many}
\end{equation}
Out of these infinitely many rotation-stretch decompositions, the
left polar decomposition in \eqref{eq:PD} is uniquely obtained by
requiring $\mathbf{\Delta}$ to be symmetric and positive definite.
This convenient choice has important advantages, but is by no means
necessary for capturing the strain invariants of the deformation,
given that $\mathbf{\mathbf{C}}_{s}^{t}=\mathbf{\Delta}^{T}\mathbf{\Delta}$
is always the case, even for a non-symmetric choice of $\mathbf{\Delta}$.
In addition, there is no a priori physical reason why the stretching
component of the deformation gradient should be symmetric. In particular,
requiring $\mathbf{\Delta}=\mathbf{\Delta}^{T}=\mathbf{U}_{s}^{t}$
does not render $\mathbf{\dot{\Delta}\mathbf{\Delta^{-1}}}$ symmetric.
In other words, the evolution of $\mathbf{U}_{\tau}^{t}$ is not spin-free.

The main advantage of the polar decomposition \eqref{eq:PD} is an
appealing geometric interpretation of the particular rotation generated
by $\mathbf{\mathbf{R}}_{\tau}^{t}$. Indeed, $\mathbf{\mathbf{R}}_{\tau}^{t}$
rotates the eigenvectors of $\mathbf{\mathbf{C}}_{\tau}^{t}$ into
eigenvectors of the left Cauchy--Green strain tensor $\mathbf{\mathbf{B}}_{\tau}^{t}=\mathbf{\mathbf{F}}_{\tau}^{t}\left(\mathbf{\mathbf{F}}_{\tau}^{t}\right)^{T}$,
or equivalently, into eigenvectors of $\mathbf{\mathbf{C}}_{t}^{\tau}$
(Truesdell and Noll \cite{Truesdell65}). This property distinguishes
$\mathbf{\mathbf{R}}_{\tau}^{t}$ as a highly plausible geometric
rotation\emph{ }component for the deformation gradient between the
times $\tau$ and $t$. A further remarkable feature of the polar
rotation tensor is that $\mathbf{\mathbf{R}}_{\tau}^{t}$ represents,
among all rotations, the closest fit to $\mathbf{\mathbf{F}}_{\tau}^{t}$
in the Frobenius matrix norm (Grioli \cite{grioli40}, Neff et al.
\cite{neff14}).

These geometric advantages of $\mathbf{\mathbf{R}}_{\tau}^{t}$, relative
to a fixed initial time $\tau$ and a fixed end time $t$, however,
also come with a disadvantage for times evolving within $[\tau,t]$:
polar rotations computed over adjacent time intervals are not additive.
More precisely, for any two sub-intervals $[\tau,s]$ and $[s,t]$
within $\left[\tau,t\right]$, we have 
\begin{equation}
\mathbf{R}_{\tau}^{t}\neq\mathbf{R}_{s}^{t}\mathbf{R}_{\tau}^{s},\label{eq:no_product}
\end{equation}
unless $\mathbf{\mathbf{U}}_{s}^{t}\mathbf{\mathbf{V}}_{\tau}^{s}=\mathbf{\mathbf{V}}_{\tau}^{s}\mathbf{\mathbf{U}}_{s}^{t}$
holds (Ito et al. \cite{Ito04}). $\mathbf{\mathbf{U}}_{s}^{t}$ and
$\mathbf{\mathbf{V}}_{\tau}^{s}$, however, fail to commute even for
the simplest deformations, such as planar rectilinear shear (cf. formula
\eqref{eq:simple_shear_non_commute}). This implies, for instance,
that $\mathbf{R}_{\tau}^{t}$ cannot be obtained from an incremental
computation starting from an intermediate state of the body at time
$s$. We refer to this feature of the polar rotation tensor, summarized
in \eqref{eq:no_product}, as its \emph{dynamical inconsistency} (see
Fig. \ref{fig:inconsistency}). 

\begin{figure}[H]
\begin{centering}
\includegraphics[width=0.5\textwidth]{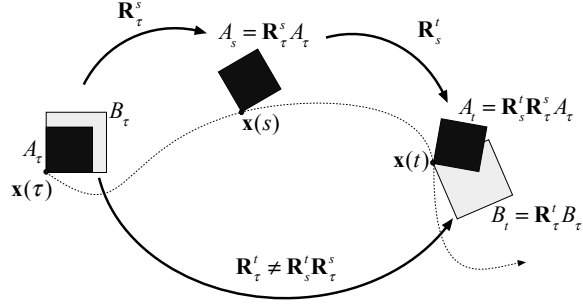}
\par\end{centering}

\caption{The action of the polar rotations $\mathbf{R}_{\tau}^{t}$, $\mathbf{R}_{\tau}^{s}$
and $\mathbf{R}_{\tau}^{t}$, illustrated on two geometric volume
elements $A_{\tau}$ and $B_{\tau}$, based at the same initial point
at time $\tau$. The evolution of $A_{\tau}$ is shown incrementally
under the subsequent polar rotations $\mathbf{R}_{\tau}^{s}$ and
$\mathbf{R}_{s}^{t}$. The evolution of the volume $B_{\tau}$ (with
initial orientation identical to that of $A_{\tau}$) is shown under
the polar rotation $\mathbf{R}_{\tau}^{t}$. All volume elements are
non-material: they only serve to illustrate how orthogonal directions
are rotated by the various polar rotations involved.}
\label{fig:inconsistency}
\end{figure}

The dynamical inconsistency of $\mathbf{R}_{\tau}^{t}$ does not imply
any flaw in the mathematics of polar decomposition. Neither does it
detract from the usefulness of $\mathbf{R}_{\tau}^{t}$ in identifying
a static rotational component of the deformation between two fixed
configurations in a geometrically optimal sense. For configurations
evolving in time, however, the polar decomposition is not an optimal
tool: the polar rotation tensor does not represent a mean material
rotation (cf. below), and the polar stretch tensor is not spin-free.
As we shall see later (cf. Section 3), both of these dynamical disadvantages
stem from the relation \eqref{eq:no_product}, which may be well-known
to experts, but is rarely, if ever, discussed in the literature. This
has led some authors to erroneously assume dynamical consistency for
$\mathbf{R}_{\tau}^{t}$ (see, e.g., Freed \cite{freed08,freed10}). 

In contrast, most textbooks in fluid mechanics introduce a mean material
rotation rate for a deforming volume element. This mean material rotation
rate is defined by Cauchy \cite{cauchy41} as the average rotation
rate of all material line elements emanating from the same point.
Cauchy's mean rotation rate turns out to be one half of the vorticity
at that point (see, e.g., Batchelor \cite{batchelor67}, Tritton \cite{tritton88}
and Vallis \cite{vallis06}). Two-dimensional experiments indeed confirm
that small, rigid objects placed in a fluid rotate at a rate that
is half of the local vorticity (Shapiro \cite{Shapiro61}). There
is, therefore, theoretical and experimental evidence for the existence
of a well-defined and observable mean material rotation in continua
that is free from the dynamical inconsistency \eqref{eq:no_product}.
Yet a connection between this mean material rotation and the finite
deformation gradient has not been established at the level of mathematical
rigor offered by the polar decomposition theorem underlying formula
\eqref{eq:PD}.

Indeed, a close link between the experimentally observed mean material
rotation in fluids (Shapiro \cite{Shapiro61}) and the rotation tensor
$\mathbf{\mathbf{R}}_{\tau}^{t}$ is only known in the limit of infinitesimally
short deformations. To state this, we need the spin tensor field $\mathbf{\mathbf{W}}(\mathbf{\mathbf{x}},t)$
and the rate-of-strain tensor field $\mathbf{D}(\mathbf{\mathbf{x}},t)$,
defined for a general velocity field $\mathbf{\mathbf{\mathbf{v}}}(\mathbf{\mathbf{x}},t)$
as
\begin{equation}
\mathbf{W}=\frac{1}{2}\left[\nabla\mathbf{v}-\left(\nabla\mathbf{v}\right)^{T}\right],\qquad\mathbf{D}=\frac{1}{2}\left[\nabla\mathbf{v}+\left(\nabla\mathbf{v}\right)^{T}\right],\label{eq:WDdef}
\end{equation}
with $\mathbf{\nabla}$ denoting the spatial gradient operation and
the superscript $T$ referring to transposition. With these ingredients,
we have the relationship 
\begin{equation}
\dot{\mathbf{R}}_{\tau}^{t}\vert_{t=\tau}=\mathbf{W(x}(t),t),\qquad\dot{\mathbf{U}}_{\tau}^{t}\vert_{t=\tau}=\dot{\mathbf{V}}_{\tau}^{t}\vert_{t=\tau}=\mathbf{D(x}(t),t),\label{eq:rotationderivative}
\end{equation}
where the dot denotes differentiation with respect to $t$ (Truesdell
\& Noll \cite{Truesdell65}). Using the definition of the vorticity
vector $\mathbf{\boldsymbol{\boldsymbol{\omega}}}=\mathbf{\nabla}\times\mathbf{\mathbf{\mathbf{v}}}$,
one therefore obtains from \eqref{eq:rotationderivative} the formula
\begin{equation}
\dot{\mathbf{R}}_{\tau}^{t}\vert_{t=\tau}\mathbf{e}=-\frac{1}{2}\mathbf{\mathbf{\boldsymbol{\omega}}}\mathbf{(x}(t),t)\times\mathbf{e},\qquad\forall\mathbf{e}\in\mathbb{R}^{3}\label{eq:vort and rot}
\end{equation}
for infinitesimally short deformations.

For deformations over a finite time interval $[\tau,t]$, the simple
relationship between the polar rotation rate and the vorticity is
lost. Only the more complex and less illuminating relationship 
\begin{equation}
\dot{\mathbf{R}}_{\tau}^{t}=\left[\mathbf{\mathbf{W(x}}(t),t)-\frac{1}{2}\mathbf{R}_{\tau}^{t}\left[\dot{\mathbf{U}}_{\tau}^{t}\left(\mathbf{U}_{\tau}^{t}\right)^{-1}-\left(\mathbf{U}_{\tau}^{t}\right)^{-1}\dot{\mathbf{U}}_{\tau}^{t}\right]\left(\mathbf{R}_{\tau}^{t}\right)^{T}\right]\mathbf{R}_{\tau}^{t}\label{eq:matrotrate}
\end{equation}
can be deduced (see, e.g., Truesdell and Rajagopal \cite{Truesdell09}).

An unexpected property of formula \eqref{eq:matrotrate}: it gives
no well-defined material rotation rate $\dot{\mathbf{\mathbf{R}}}_{\tau}^{t}\left(\mathbf{\mathbf{R}}_{\tau}^{t}\right)^{T}$
in a deforming continuum at a given location $\mathbf{x}$ and given
time $t$. Rather, the current polar rotation rate at time $t$ depends
on the starting time $\tau$ of the observation (cf. Appendix A).
This effect is not to be mistaken for the usual \emph{implicit} dependence
of kinematic tensors on the reference configuration, entering through
the dependence of the tensor on the initial conditions of its governing
differential equation. Rather, the effect arises from the \emph{explicit}
dependence of the differential equation \eqref{eq:matrotrate} on
the initial time $\tau$ through $\mathbf{U}_{\tau}^{t}$. In other
words, polar rotations do not form a dynamical system (or process):
they satisfy a nonlinear differential equation with memory (see Appendix
A for details).

Here we develop an alternative to the classic polar decomposition
which is free from these issues. Our \emph{dynamic polar decomposition}
(DPD) applies to general, non-autonomous linear processes, as opposed
to single linear operators. When applied to the deformation gradient,
the DPD yields a unique factorization $\mathbf{\mathbf{F}}_{\tau}^{t}=\mathbf{\mathbf{O}}_{\tau}^{t}\mathbf{\mathbf{M}}_{\tau}^{t}$,
with a \emph{dynamic stretch tensor} $\mathbf{\mathbf{M}}_{\tau}^{t}$
that is free from spin, and a\emph{ dynamic rotation tensor} $\mathbf{\mathbf{O}}_{\tau}^{t}$
that is free from the dynamical inconsistency \eqref{eq:no_product}.
We point out partial connections and analogies between these dynamic
tensors and prior work by Epstein \cite{epstein62}, Noll \cite{noll55}
and Rubinstein and Atluri \cite{rubinstein83} in Remark \ref{remark:prior work}
of Section 3.

The tensor $\mathbf{\mathbf{O}}_{\tau}^{t}$ is, in fact, the only
dynamically consistent rotation tensor out of the infinitely many
possible ones in \eqref{eq:infinitely_many}. Likewise, $\mathbf{\mathbf{M}}_{\tau}^{t}$
is the only spin-free stretch tensor out of the infinitely many possible
ones in \eqref{eq:infinitely_many}. Unlike the tensor pair $(\mathbf{\mathbf{R}}_{\tau}^{t},\mathbf{\mathbf{U}}_{\tau}^{t}),$
the dynamic tensor pair $(\mathbf{\mathbf{O}}_{\tau}^{t},\mathbf{\mathbf{M}}_{\tau}^{t})$
forms a dynamical system.

The dynamic rotation tensor reproduces Cauchy's mean material rotation
rate, giving the rate $\dot{\mathbf{O}}_{\tau}^{t}\left(\mathbf{O}_{\tau}^{t}\right)^{T}=\mathbf{\mathbf{\mathbf{W(x}}}(t),t\mathbf{)}$
for both finite and infinitesimal deformations. This fills the prior
mathematical gap between the deformation gradient and numerical algorithms
that rotate the reference frame incrementally (but not infinitesimally)
at the spin rate (Hughes and Winget \cite{hughes80}, Rubinstein and
Atluri \cite{rubinstein83}) rather than at the polar rotation rate.

The dynamic rotation rate $\dot{\mathbf{O}}_{\tau}^{t}\left(\mathbf{O}_{\tau}^{t}\right)^{T}$
eliminates the discrepancy of the rotation rate formula \eqref{eq:matrotrate}
with Shapiro's experiments, Helmholtz's view on continuum rotation,
and Cauchy's local mean rotation rate over all material fibers. We
also show that $\mathbf{\mathbf{O}}_{\tau}^{t}$ admits a further
factorization into a spatial mean rotation tensor and a dynamically
consistent relative rotation tensor, the latter of which is objective
for planar deformations. Finally, we introduce dynamically consistent
(i.e., temporally additive) rotation angles that extend Cauchy's classic
mean rotation, and illustrate all these concepts on two- and three-dimensional
examples.

\section{Dynamic Polar Decomposition (DPD)}

Several generalizations of the classic polar decomposition to linear
operators on various spaces are available (see, e.g., Douglas \cite{Douglas66},
Conway \cite{Conway90}) These, however, invariably target single
linear operators, as opposed to operator families, and hence exhibit
the dynamic inconsistency \eqref{eq:no_product}. 

The only polar decomposition developed specifically for time-dependent
operator families appears to be the one by Munthe--Kaas et al. \cite{Munthe01}
and Zanna and Munthe--Kaas \cite{Zanna01}). This targets Lie groups,
such as matrix-exponential solutions of linear autonomous systems
of differential equations. The decomposition, however, is approximate
and exists only for small enough $t-\tau$, i.e., for small deformations
in our context. More importantly, the deformation gradient $\mathbf{\mathbf{F}}_{\tau}^{t}$
is generally a two-parameter process (Dafermos \cite{dafermos71}),
not a one-parameter Lie group, even if the underlying deformation
field has a steady velocity field.

In order to modify the classic polar decomposition to one with dynamic
consistency, we first recall the notion of a process. We formulate
the original definition of a nonlinear process here specifically for
linear systems. The definition for nonlinear processes can be recovered
by replacing the product of two linear operators in Definition 1 with
the composition of two general functions that depend on $t$ and $\tau$
as parameters (cf. Appendix A).
\begin{defn}
A two-parameter family $\mathbf{T}_{\tau}^{t}\colon\mathbb{\mathbf{\mathbb{R}}}^{n}\to\mathbb{\mathbf{\mathbb{R}}}^{n}$,
$\tau,t\in\mathbb{\mathbf{\mathbb{R}}},$ of linear operators is a\emph{
linear process} if it is continuously differentiable with respect
to the parameters $\tau$ and $t,$ and satisfies 
\end{defn}
\[
\mathbf{T}_{t}^{t}=\mathbf{I},\qquad\mathbf{T}_{\tau}^{t}=\mathbf{T}_{s}^{t}\mathbf{T}_{\tau}^{s},\qquad\tau,s,t\in\mathbb{R}.
\]

For any linear process, we can write 
\begin{equation}
\dot{\mathbf{T}}_{\tau}^{t}=\frac{d}{d\sigma}\left.\mathbf{T}_{\tau}^{t+\sigma}\right|_{\sigma=0}=\frac{d}{d\sigma}\left.\mathbf{T}_{t}^{t+\sigma}\mathbf{T}_{\tau}^{t}\right|_{\sigma=0}=\frac{d}{d\sigma}\left.\mathbf{T}_{t}^{t+\sigma}\right|_{\sigma=0}\mathbf{T}_{\tau}^{t}.\label{eq:TODE}
\end{equation}
Therefore, any linear process $\mathbf{T}_{\tau}^{t}$ is the unique
solution of a non-autonomous linear initial value problem of the form
\begin{equation}
\mathbf{\dot{Z}}=\mathbf{A}(t)\mathbf{Z},\qquad\mathbf{Z}(\tau)=\mathbf{I};\qquad\mathbf{A}(t):=\frac{d}{d\sigma}\left.\mathbf{T}_{t}^{t+\sigma}\right|_{\sigma=0}.\label{eq:TODE-1}
\end{equation}
Conversely, the solution of any non-autonomous linear initial value
problem $\dot{\mathbf{Z}}=\mathbf{A}(t)\mathbf{Z},\,\,\,\mathbf{Z}(\tau)=\mathbf{I}$
is a linear process by the basic properties of fundamental matrix
solutions of linear differential equations (Arnold \cite{arnold78}). 
\begin{example}
\label{ex:The-deformation-gradient}The deformation gradient $\mathbf{\mathbf{F}}_{\tau}^{t}$
arising from a velocity field $\mathbf{\mathbf{v}}(\mathbf{\mathbf{x}},t)$
is a linear process, as it satisfies the equation of variations
\begin{equation}
\dot{\mathbf{Z}}=\mathbf{\nabla}\mathbf{v}(\mathbf{x}(t),t)\mathbf{Z}\label{eq:vari}
\end{equation}
with initial condition $\mathbf{Z}(\tau)=\mathbf{I}$, along the trajectory
$\mathbf{\mathbf{x}}(t)$. If the velocity field $\mathbf{\mathbf{v}}$
is irrotational ($\nabla\times\mathbf{\mathbf{v}}\equiv0$), then
its spin tensor $\mathbf{\mathbf{W}}$ vanishes, and hence $\dot{\mathbf{Z}}\mathbf{Z}^{-1}=\nabla\mathbf{\mathbf{v}}(\mathbf{\mathbf{x}}(t),t)=\mathbf{D}(\mathbf{\mathbf{x}}(t),t)$
is a symmetric matrix. Similarly, if the velocity field generates
purely rotational motion without Eulerian strain ($\mathbf{D}\equiv\mathbf{0}$),
then $\dot{\mathbf{Z}}\mathbf{Z}^{-1}=\nabla\mathbf{\mathbf{v}}(\mathbf{\mathbf{x}}(t),t)=\mathbf{\mathbf{W}}(\mathbf{\mathbf{x}}(t),t)$
is a skew-symmetric matrix. 
\end{example}
Motivated by Example \ref{ex:The-deformation-gradient}, we introduce
the following definitions for smooth, two-parameter families of operators:
\begin{defn}
\label{def:rotational}Let $\mathrm{Skew\,}(n)$, $\mathrm{Sym\,}(n)$,
and $SO(n)$ denote the set of skew-symmetric, symmetric and proper-orthogonal
linear operators on $\mathbb{\mathbf{\mathbb{R}}}^{n}$, respectively.
Also, let $\mathbf{T}_{\tau}^{t}\colon\mathbb{\mathbb{R}}^{n}\to\mathbb{\mathbb{R}}^{n}$
be a smooth, two-parameter family of linear operators. Then\end{defn}
\begin{description}
\item [{(i)}] $\mathbf{T}_{\tau}^{t}$ is \emph{rotational} if $\dot{\mathbf{T}}_{\tau}^{t}\left[\mathbf{T}_{\tau}^{t}\right]^{-1}\in\mathrm{Skew\,}(n)$
for all $\tau,t\in\mathbb{\mathbb{R}}$, or, equivalently, $\mathbf{T}_{\tau}^{t}\in SO(n)$
for all $\tau,t\in\mathbb{\mathbb{R}};$
\item [{(ii)}] $\mathbf{T}_{\tau}^{t}$ is \emph{irrotational} if $\dot{\mathbf{T}}_{\tau}^{t}\left[\mathbf{T}_{\tau}^{t}\right]^{-1}\in\mathrm{Sym\,}(n)$
for all $\tau,t\in\mathbb{\mathbb{R}}$. 
\end{description}
The equivalence of the two characterizations of time-dependent rotations
in (i) of Definition 2 is broadly known, as discussed, e.g., by Epstein
\cite{epstein66}. The concept of an irrotational linear operator
family in (ii) of Definition 2 serves as a relaxation of the concept
of symmetric operator families. Instead of requiring $\mathbf{T}_{\tau}^{t}$
to be symmetric, we only require $\dot{\mathbf{T}}_{\tau}^{t}\left[\mathbf{T}_{\tau}^{t}\right]^{-1}$
to be symmetric, which guarantees $\mathbf{T}_{\tau}^{t}$ to be the
deformation field of a purely straining linear velocity field. We
then obtain the following result on the decomposition of an arbitrary
smooth linear process $\mathbf{T}_{\tau}^{t}$ into a rotational process
and an irrotational linear transformation family.
\begin{thm}
\label{theo:DPD}{[}Dynamic Polar Decomposition (DPD){]} Any linear
process $\mathbf{T}_{\tau}^{t}\colon\mathbb{\mathbb{R}}^{n}\to\mathbb{\mathbb{R}}^{n}$
admits a unique decomposition of the form
\begin{equation}
\mathbf{T}_{\tau}^{t}=\mathbf{O}_{\tau}^{t}\mathbf{M}_{\tau}^{t}=\mathbf{N}_{\tau}^{t}\mathbf{O}_{\tau}^{t},\label{eq:DPD}
\end{equation}
where $\mathbf{\mathbf{O}}_{\tau}^{t}$ is an $n$-dimensional rotational
process, while $\mathbf{\mathbf{M}}_{\tau}^{t}$ and $\left(\mathbf{\mathbf{N}}_{\tau}^{t}\right)^{T}$
are $n$-dimensional irrotational operator families that have the
same singular values as $\mathbf{T}_{\tau}^{t}$ . Furthermore, with
the operators
\begin{equation}
\mathbf{A}^{-}(t)=\frac{1}{2}\left[\dot{\mathbf{T}}_{\tau}^{t}\mathbf{T}_{t}^{\tau}-\left(\mathbf{T}_{t}^{\tau}\right)^{T}\left(\dot{\mathbf{T}}_{\tau}^{t}\right)^{T}\right],\qquad\mathbf{A}^{+}(t)=\frac{1}{2}\left[\dot{\mathbf{T}}_{\tau}^{t}\mathbf{T}_{t}^{\tau}+\left(\mathbf{T}_{t}^{\tau}\right)^{T}\left(\dot{\mathbf{T}}_{\tau}^{t}\right)^{T}\right],\label{eq:A+-}
\end{equation}
the factors in the decomposition \eqref{eq:DPD} satisfy the linear
differential equations 
\begin{eqnarray}
\dot{\mathbf{O}}_{\tau}^{t} & = & \mathbf{A}^{-}(t)\mathbf{O}_{\tau}^{t},\qquad\qquad\qquad\mathbf{\qquad\,\,\,\,\,\,\,\,\,\,\,\,O}_{\tau}^{\tau}=\mathbf{I},\nonumber \\
\dot{\mathbf{M}}_{\tau}^{t} & = & \left[\mathbf{O}_{t}^{\tau}\mathbf{A}^{+}(t)\mathbf{O}_{\tau}^{t}\right]\mathbf{M}_{\tau}^{t},\qquad\,\,\mathbf{\qquad\,\,\,\,\,\,\,\,\,M}_{\tau}^{\tau}=\mathbf{I},\nonumber \\
\frac{d}{d\tau}\left(\mathbf{N}_{\tau}^{t}\right)^{T} & = & -\left[\mathbf{O}_{\tau}^{t}\mathbf{A}^{+}(\tau)\mathbf{O}_{t}^{\tau}\right]\left(\mathbf{N}_{\tau}^{t}\right)^{T},\qquad\left(\mathbf{N}_{t}^{t}\right)^{T}=\mathbf{I}.\label{eq:ODEs-1}
\end{eqnarray}
Both $\mathbf{A}^{-}(t)$ and $\mathbf{A}^{+}(t)$ are independent
of $\tau$, and hence $\mathbf{A}^{+}(\tau)$ is independent of $t$.\end{thm}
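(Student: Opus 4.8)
The plan is to identify the two generators $\mathbf{A}^{\pm}(t)$ in \eqref{eq:A+-} as the symmetric and skew-symmetric parts of the infinitesimal generator of the process $\mathbf{T}_{\tau}^{t}$, to build $\mathbf{O}_{\tau}^{t}$ by integrating the first line of \eqref{eq:ODEs-1}, to define $\mathbf{M}_{\tau}^{t}$ and $\mathbf{N}_{\tau}^{t}$ as the residual right and left factors, and finally to obtain uniqueness from the uniqueness of the symmetric-plus-skew splitting of a matrix. First I would recall from \eqref{eq:TODE}--\eqref{eq:TODE-1} that $\mathbf{T}_{\tau}^{t}$ is generated by $\mathbf{A}(t)=\dot{\mathbf{T}}_{\tau}^{t}\mathbf{T}_{t}^{\tau}$, using the process identity $\mathbf{T}_{t}^{\tau}=(\mathbf{T}_{\tau}^{t})^{-1}$, and that this $\mathbf{A}(t)$ is independent of $\tau$. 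Comparing with \eqref{eq:A+-} shows directly that $\mathbf{A}^{-}(t)=\frac{1}{2}[\mathbf{A}(t)-\mathbf{A}(t)^{T}]$ and $\mathbf{A}^{+}(t)=\frac{1}{2}[\mathbf{A}(t)+\mathbf{A}(t)^{T}]$ are the skew and symmetric parts of $\mathbf{A}(t)$; both therefore inherit the $\tau$-independence of $\mathbf{A}(t)$, and evaluating the symmetric part at $t=\tau$ proves the closing sentence of the theorem.

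Next I would define $\mathbf{O}_{\tau}^{t}$ as the unique solution of $\dot{\mathbf{O}}_{\tau}^{t}=\mathbf{A}^{-}(t)\mathbf{O}_{\tau}^{t}$, $\mathbf{O}_{\tau}^{\tau}=\mathbf{I}$, and check that it is a rotational process in the sense of Definition \ref{def:rotational}. Orthogonality follows from $\frac{d}{dt}[(\mathbf{O}_{\tau}^{t})^{T}\mathbf{O}_{\tau}^{t}]=(\mathbf{O}_{\tau}^{t})^{T}[\mathbf{A}^{-}(t)^{T}+\mathbf{A}^{-}(t)]\mathbf{O}_{\tau}^{t}=\mathbf{0}$, since $\mathbf{A}^{-}$ is skew and the initial value is $\mathbf{I}$; the determinant stays $+1$ by continuity, so $\mathbf{O}_{\tau}^{t}\in SO(n)$. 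Because $\mathbf{A}^{-}(t)$ depends only on $t$, the family $\mathbf{O}_{\tau}^{t}$ is a fundamental matrix solution and hence a process, as recalled after \eqref{eq:TODE-1}; thus $\mathbf{O}_{\tau}^{t}$ is a rotational process.

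I would then set $\mathbf{M}_{\tau}^{t}=(\mathbf{O}_{\tau}^{t})^{-1}\mathbf{T}_{\tau}^{t}$ and $\mathbf{N}_{\tau}^{t}=\mathbf{T}_{\tau}^{t}(\mathbf{O}_{\tau}^{t})^{-1}$, which realize \eqref{eq:DPD} by construction. Differentiating $\mathbf{M}_{\tau}^{t}$ in $t$ with $\dot{\mathbf{T}}_{\tau}^{t}=[\mathbf{A}^{-}(t)+\mathbf{A}^{+}(t)]\mathbf{T}_{\tau}^{t}$ and $\frac{d}{dt}(\mathbf{O}_{\tau}^{t})^{-1}=-(\mathbf{O}_{\tau}^{t})^{-1}\mathbf{A}^{-}(t)$ makes the $\mathbf{A}^{-}$ terms cancel, leaving the second line of \eqref{eq:ODEs-1}; the resulting log-derivative $(\mathbf{O}_{\tau}^{t})^{T}\mathbf{A}^{+}(t)\mathbf{O}_{\tau}^{t}$ is symmetric, so $\mathbf{M}_{\tau}^{t}$ is irrotational. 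For the left factor the natural evolution variable is $\tau$: using the reverse-time identities $\frac{\partial}{\partial\tau}\mathbf{T}_{\tau}^{t}=-\mathbf{T}_{\tau}^{t}\mathbf{A}(\tau)$ and $\frac{\partial}{\partial\tau}\mathbf{O}_{t}^{\tau}=\mathbf{A}^{-}(\tau)\mathbf{O}_{t}^{\tau}$ (both obtained by differentiating $(\mathbf{T}_{t}^{\tau})^{-1}$ and $(\mathbf{O}_{t}^{\tau})^{-1}$), the $\mathbf{A}^{-}$ contributions again combine to yield the third line of \eqref{eq:ODEs-1}, and transposing together with orthogonality of $\mathbf{O}_{\tau}^{t}$ shows that $(\mathbf{N}_{\tau}^{t})^{T}$ has a symmetric logarithmic derivative in $\tau$, i.e.\ is irrotational. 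The singular-value claims are immediate: $(\mathbf{M}_{\tau}^{t})^{T}\mathbf{M}_{\tau}^{t}=(\mathbf{T}_{\tau}^{t})^{T}\mathbf{T}_{\tau}^{t}$ and $\mathbf{N}_{\tau}^{t}(\mathbf{N}_{\tau}^{t})^{T}=\mathbf{T}_{\tau}^{t}(\mathbf{T}_{\tau}^{t})^{T}$, since $\mathbf{O}_{\tau}^{t}$ is orthogonal.

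Finally, uniqueness: given any factorization $\mathbf{T}_{\tau}^{t}=\mathbf{O}_{\tau}^{t}\mathbf{M}_{\tau}^{t}$ with $\mathbf{O}_{\tau}^{t}$ a rotational process and $\mathbf{M}_{\tau}^{t}$ irrotational, differentiation in $t$ gives $\mathbf{A}(t)=\dot{\mathbf{O}}_{\tau}^{t}(\mathbf{O}_{\tau}^{t})^{-1}+\mathbf{O}_{\tau}^{t}[\dot{\mathbf{M}}_{\tau}^{t}(\mathbf{M}_{\tau}^{t})^{-1}](\mathbf{O}_{\tau}^{t})^{-1}$, where the first term is skew and the second is symmetric, being a congruence of a symmetric matrix by the orthogonal $\mathbf{O}_{\tau}^{t}$. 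By the uniqueness of the symmetric-plus-skew decomposition, $\dot{\mathbf{O}}_{\tau}^{t}(\mathbf{O}_{\tau}^{t})^{-1}=\mathbf{A}^{-}(t)$, and since the process property forces $\mathbf{O}_{\tau}^{\tau}=\mathbf{I}$, the factor $\mathbf{O}_{\tau}^{t}$ solves exactly the first initial value problem in \eqref{eq:ODEs-1} and is therefore unique; then $\mathbf{M}_{\tau}^{t}$ and $\mathbf{N}_{\tau}^{t}$ are determined as above. I expect the main obstacle to be the careful bookkeeping of which time variable drives each residual factor, namely that $\mathbf{M}_{\tau}^{t}$ is irrotational along $t$ while $(\mathbf{N}_{\tau}^{t})^{T}$ is irrotational along $\tau$; the transpose appearing in $(\mathbf{N}_{\tau}^{t})^{T}$ is precisely what converts the left factor into an object with a symmetric logarithmic derivative, and deriving the reverse-time derivative identities correctly is the step most prone to sign and ordering errors.
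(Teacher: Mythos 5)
Your proposal is correct and follows essentially the same route as the paper's Appendix B: both identify $\mathbf{A}^{\pm}(t)$ as the skew and symmetric parts of the $\tau$-independent generator $\mathbf{A}(t)=\dot{\mathbf{T}}_{\tau}^{t}\mathbf{T}_{t}^{\tau}$, construct $\mathbf{O}_{\tau}^{t}$ from the skew part's initial value problem, read off the equations for the residual factors, and obtain uniqueness from the uniqueness of the symmetric-plus-skew splitting combined with uniqueness of solutions of the resulting initial value problems. The only cosmetic difference is that the paper reaches the left factor via $\mathbf{N}_{\tau}^{t}=(\mathbf{M}_{t}^{\tau})^{-1}$ and an interchange of $\tau$ and $t$, whereas you differentiate $\mathbf{N}_{\tau}^{t}=\mathbf{T}_{\tau}^{t}\mathbf{O}_{t}^{\tau}$ in $\tau$ directly; the two computations are equivalent.
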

\begin{proof}
See Appendix B.
\end{proof}
Once the rotational process $\mathbf{O}_{\tau}^{t}$ and one of the
two irrotational operator families, $\mathbf{\mathbf{M}}_{\tau}^{t}$
and $\mathbf{\mathbf{N}}_{\tau}^{t}$, are known, the other irrotational
operator family can be computed from the relationship \eqref{eq:DPD}
.

\section{DPD of the deformation gradient}

Theorem \ref{theo:DPD} implies that the linear process $\mathbf{\mathbf{F}}_{\tau}^{t}$
(cf. Example \ref{ex:The-deformation-gradient}) can uniquely be written
as the product of left (and right) rotational and irrotational operator
families. Out of the two versions of this decomposition, the left
irrotational operator family also turns out to be objective, i.e.,
its invariants transform properly under Euclidean transformations
of the form
\begin{equation}
\mathbf{x}=\mathbf{Q}(t)\mathbf{y}+\mathbf{b}(t),\label{eq:objective}
\end{equation}
where the matrix $\mathbf{\mathbf{Q}}(t)\in SO(3)$ and the vector
$\mathbf{b}(t)\in\mathbb{R}^{3}$ are smooth functions of $t$ (Truesdell
and Noll \cite{Truesdell65}). We summarize these results in more
precise terms as follows, using notation already introduced in \eqref{eq:WDdef}.
\begin{thm}
\label{theo:FDPD}{[}DPD of the deformation gradient{]} For the deformation
field $\mathbf{X}_{t_{0}}^{t}\colon\mathcal{B}(t_{0})\subset\mathbb{R}^{3}\to\mathcal{B}(t)$,
with $t\in[t_{0},t_{1}]$, consider a trajectory $\mathbf{x}(t)$
with $\mathbf{x}(t_{0})=\mathbf{x}_{0}$. Then for any initial time
of observation $\tau\in[t_{0},t_{1}]$:

(i) The deformation gradient $\mathbf{\mathbf{F}}_{\tau}^{t}\left(\mathbf{x}(\tau)\right)$
admits a unique decomposition of the form
\begin{equation}
\mathbf{\mathbf{F}}_{\tau}^{t}=\mathbf{O}_{\tau}^{t}\mathbf{M}_{\tau}^{t}=\mathbf{N}_{\tau}^{t}\mathbf{O}_{\tau}^{t},\label{eq:DPDdef}
\end{equation}
where the dynamic rotation tensor $\mathbf{\mathbf{O}}_{\tau}^{t}$
is a rotational linear process; the dynamic right stretch tensor $\mathbf{\mathbf{M}}_{\tau}^{t}$,
as well as the transpose of the dynamic left stretch tensor $\mathbf{\mathbf{N}}_{\tau}^{t}$,
are irrotational families of transformations. 

\textup{\emph{(ii) For any $\tau,t\in\mathbb{R}$, the dynamic stretch
tensors $\mathbf{\mathbf{M}}_{\tau}^{t}$ and $\mathbf{\mathbf{N}}_{\tau}^{t}=\left(\mathbf{\mathbf{M}}_{t}^{\tau}\right)^{-1}$
are nonsingular and have the same singular values and principal axes
of strain as $\mathbf{\mathbf{U}}_{\tau}^{t}$ and $\mathbf{\mathbf{V}}_{\tau}^{t}$
do.}}

\textup{(iii) }\textup{\emph{The dynamic rotation tensor $\mathbf{\mathbf{O}}_{\tau}^{t}$,
and the dynamic stretch tensors $\mathbf{\mathbf{M}}_{\tau}^{t}$
and $\mathbf{\mathbf{N}}_{\tau}^{t}$ are solutions of the linear
initial value problems}} 
\begin{eqnarray}
\ \ \dot{\mathbf{O}}_{\tau}^{t} & = & \mathbf{W}\left(\mathbf{x}(t),t\right)\mathbf{O}_{\tau}^{t},\qquad\qquad\qquad\mathbf{\qquad\,\,\,\,\,\,\,\,\,\,\,\,\,\,\,O}_{\tau}^{\tau}=\mathbf{I},\label{eq:OODE}\\
\dot{\mathbf{M}}_{\tau}^{t} & = & \left[\mathbf{O}_{t}^{\tau}\mathbf{D}\left(\mathbf{x}(t),t\right)\mathbf{O}_{\tau}^{t}\right]\mathbf{M}_{\tau}^{t},\qquad\,\,\mathbf{\qquad\,\,\,\,\,\,\,\,\,\,\,\,\,\ M}_{\tau}^{\tau}=\mathbf{I},\label{eq:MODE}\\
\frac{d}{d\tau}\left(\mathbf{N}_{\tau}^{t}\right)^{T} & = & -\left[\mathbf{O}_{\tau}^{t}\mathbf{D}\left(\mathbf{x}(\tau),\tau\right)\mathbf{O}_{t}^{\tau}\right]\left(\mathbf{N}_{\tau}^{t}\right)^{T},\qquad\,\,\,\,\,\left(\mathbf{N}_{t}^{t}\right)^{T}=\mathbf{I}.\label{eq:NODE}
\end{eqnarray}

(iv) The left dynamic stretch tensor $\mathbf{\mathbf{N}}_{\tau}^{t}$
is objective (cf. Remark \ref{remark:objectivity}).\end{thm}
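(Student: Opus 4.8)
The plan is to obtain Theorem~\ref{theo:FDPD} as a direct specialization of the general Dynamic Polar Decomposition in Theorem~\ref{theo:DPD}, applied to the particular linear process $\mathbf{T}_{\tau}^{t}=\mathbf{F}_{\tau}^{t}$ identified in Example~\ref{ex:The-deformation-gradient}. The crucial first step is to evaluate the operators $\mathbf{A}^{-}(t)$ and $\mathbf{A}^{+}(t)$ of \eqref{eq:A+-} for this process. Since $\mathbf{F}_{\tau}^{t}$ solves the equation of variations \eqref{eq:vari}, one has $\dot{\mathbf{F}}_{\tau}^{t}\mathbf{F}_{t}^{\tau}=\nabla\mathbf{v}(\mathbf{x}(t),t)\mathbf{F}_{\tau}^{t}\left(\mathbf{F}_{\tau}^{t}\right)^{-1}=\nabla\mathbf{v}(\mathbf{x}(t),t)$, whence the definitions \eqref{eq:WDdef} give at once $\mathbf{A}^{-}(t)=\mathbf{W}(\mathbf{x}(t),t)$ and $\mathbf{A}^{+}(t)=\mathbf{D}(\mathbf{x}(t),t)$. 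Substituting these identities into the existence-and-uniqueness assertion and the governing equations \eqref{eq:ODEs-1} of Theorem~\ref{theo:DPD} then yields statement (i) and the three initial value problems \eqref{eq:OODE}--\eqref{eq:NODE} of statement (iii) with no further work.

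For statement (ii), I would exploit the orthogonality of $\mathbf{O}_{\tau}^{t}$ guaranteed by Theorem~\ref{theo:DPD}. Left-multiplying the identity $\mathbf{F}_{\tau}^{t}=\mathbf{O}_{\tau}^{t}\mathbf{M}_{\tau}^{t}$ by its transpose gives $\left(\mathbf{M}_{\tau}^{t}\right)^{T}\mathbf{M}_{\tau}^{t}=\left(\mathbf{F}_{\tau}^{t}\right)^{T}\mathbf{F}_{\tau}^{t}=\mathbf{C}_{\tau}^{t}=\left(\mathbf{U}_{\tau}^{t}\right)^{2}$, so that $\mathbf{M}_{\tau}^{t}$ and $\mathbf{U}_{\tau}^{t}$ share right singular vectors (the eigenvectors of $\mathbf{C}_{\tau}^{t}$) and singular values; since $\mathbf{F}_{\tau}^{t}$ is invertible these values are positive, so $\mathbf{M}_{\tau}^{t}$ is nonsingular. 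The analogous computation on $\mathbf{F}_{\tau}^{t}=\mathbf{N}_{\tau}^{t}\mathbf{O}_{\tau}^{t}$ gives $\mathbf{N}_{\tau}^{t}\left(\mathbf{N}_{\tau}^{t}\right)^{T}=\mathbf{B}_{\tau}^{t}=\left(\mathbf{V}_{\tau}^{t}\right)^{2}$, establishing the corresponding claim for $\mathbf{N}_{\tau}^{t}$ and $\mathbf{V}_{\tau}^{t}$. To obtain the relation $\mathbf{N}_{\tau}^{t}=\left(\mathbf{M}_{t}^{\tau}\right)^{-1}$, I would apply the decomposition to the reverse process $\mathbf{F}_{t}^{\tau}=\left(\mathbf{F}_{\tau}^{t}\right)^{-1}=\mathbf{O}_{t}^{\tau}\mathbf{M}_{t}^{\tau}$, invert, and use the process property $\left(\mathbf{O}_{t}^{\tau}\right)^{-1}=\mathbf{O}_{\tau}^{t}$ to compare the result with $\mathbf{F}_{\tau}^{t}=\mathbf{N}_{\tau}^{t}\mathbf{O}_{\tau}^{t}$; uniqueness of the factorization then forces the stated identity.

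The substantive part of the argument is statement (iv). I would start from the rule $\mathbf{F}_{\tau}^{t}\mapsto\mathbf{Q}(t)\mathbf{F}_{\tau}^{t}\mathbf{Q}^{T}(\tau)$ induced on the deformation gradient by the Euclidean change of frame \eqref{eq:objective}, together with the well-known non-objective transformation $\mathbf{W}\mapsto\mathbf{Q}(t)\mathbf{W}\mathbf{Q}^{T}(t)+\dot{\mathbf{Q}}(t)\mathbf{Q}^{T}(t)$ of the spin tensor. The key step is to determine how the dynamic rotation tensor transforms: I would verify that $\mathbf{Q}(t)\mathbf{O}_{\tau}^{t}\mathbf{Q}^{T}(\tau)$ satisfies exactly the transformed equation \eqref{eq:OODE} with the transformed spin and the correct identity initial condition at $t=\tau$, so that uniqueness of the linear initial value problem identifies it as the dynamic rotation tensor in the new frame. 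Feeding this back through $\mathbf{N}_{\tau}^{t}=\mathbf{F}_{\tau}^{t}\left(\mathbf{O}_{\tau}^{t}\right)^{-1}$, the factors $\mathbf{Q}^{T}(\tau)$ and $\mathbf{Q}(\tau)$ cancel, leaving $\mathbf{N}_{\tau}^{t}\mapsto\mathbf{Q}(t)\mathbf{N}_{\tau}^{t}\mathbf{Q}^{T}(t)$, which is precisely the objectivity transformation of an Eulerian tensor. The main obstacle is bookkeeping rather than conceptual: one must track the distinct time arguments $t$ and $\tau$ carefully and confirm that the extra spin term $\dot{\mathbf{Q}}(t)\mathbf{Q}^{T}(t)$ is absorbed entirely into the transformation of $\mathbf{O}_{\tau}^{t}$. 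The same computation applied to $\mathbf{M}_{\tau}^{t}=\left(\mathbf{O}_{\tau}^{t}\right)^{-1}\mathbf{F}_{\tau}^{t}$ instead leaves a residual $\mathbf{Q}(\tau)$ anchored at the initial time, which explains why only the left stretch tensor $\mathbf{N}_{\tau}^{t}$, and not $\mathbf{M}_{\tau}^{t}$, is objective.
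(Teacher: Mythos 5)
Your proposal is correct and follows essentially the same route as the paper: statements (i)--(iii) by specializing Theorem~\ref{theo:DPD} to $\mathbf{T}_{\tau}^{t}=\mathbf{F}_{\tau}^{t}$ with $\mathbf{A}^{-}=\mathbf{W}$ and $\mathbf{A}^{+}=\mathbf{D}$, and statement (iv) by showing that the conjugated tensors satisfy the transformed initial value problems and invoking uniqueness of the DPD, so that $\mathbf{N}_{\tau}^{t}$ transforms by conjugation with $\mathbf{Q}$ evaluated at the current time $t$ while $\mathbf{M}_{\tau}^{t}$ remains anchored at $\tau$. The only differences are cosmetic (your $\mathbf{Q}$ is the transpose of the paper's, and you make explicit the identification of $\mathbf{A}^{\pm}$ and the singular-value argument that the paper delegates to the proof of Theorem~\ref{theo:DPD}).
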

\begin{proof}
See Appendix C.
\end{proof}

\begin{rem}
A physical interpretation of the left DPD in statement (i) Theorem
2 is the following. The deformation gradient $\mathbf{\mathbf{F}}_{\tau}^{t}$
can uniquely be written as a product of two other deformation gradients:
one for a purely rotational (i.e., strainless) linear deformation
field, and one for a purely straining (i.e., irrotational) linear
deformation field. Specifically, $\mathbf{O}_{\tau}^{t}=\mathbf{\partial}_{\mathbf{a_{\tau}}}\mathbf{a}(t)$
is the deformation gradient of the strainless linear deformation $\mathbf{a}_{\tau}\mapsto\mathbf{a}(t;\tau,\mathbf{a}_{\mathbf{\tau}})$
defined by 
\begin{equation}
\dot{\mathbf{a}}=\mathbf{W}\left(\mathbf{x}(t),t\right)\mathbf{a},\label{eq:Odef}
\end{equation}
and $\mathbf{M}_{\tau}^{t}=\mathbf{\partial}_{\mathbf{b_{\tau}}}\mathbf{b}(t)$
is the deformation gradient of the irrotational linear deformation
$\mathbf{b}_{\tau}\mapsto\mathbf{b}(t;\tau,\mathbf{b}_{\tau})$ defined
by
\begin{equation}
\dot{\mathbf{b}}=\mathbf{O}_{t}^{\tau}\mathbf{D}\left(\mathbf{x}(t),t\right)\mathbf{O}_{\tau}^{t}\mathbf{b}.\label{eq:Mdef}
\end{equation}
A similar interpretation holds for the right DPD in statement (i)
of Theorem 2.
\end{rem}

\begin{rem}
Theorem \ref{theo:FDPD} guarantees that the dynamic rotation tensor
$\mathbf{\mathbf{O}}_{\tau}^{t}$ is the fundamental matrix solution
of the classical linear system of ODEs \eqref{eq:OODE}. As a consequence,
$\mathbf{\mathbf{O}}_{\tau}^{t}$ forms a linear process (or linear
dynamical system), thereby satisfying the required dynamical consistency
condition 
\begin{equation}
\mathbf{O}_{\tau}^{t}=\mathbf{O}_{s}^{t}\mathbf{O}_{\tau}^{s},\qquad\forall\,\tau,s,t\in\mathbb{R}.\label{eq:consistencyofO}
\end{equation}
By construction (cf. the proof of Theorem 2), $\mathbf{O}_{\tau}^{t}$
is the unique dynamically consistent rotation tensor out of the infinitely
many possible ones in \eqref{eq:infinitely_many}.
\end{rem}

\begin{rem}
The formula
\begin{equation}
\dot{\mathbf{U}}_{\tau}^{t}=\left[\mathbf{R}_{\tau}^{t}\right]^{T}\left[\nabla\mathbf{v}\left(\mathbf{x}(t),t\right)-\mathbf{\dot{R}}_{\tau}^{t}\left[\mathbf{R}_{\tau}^{t}\right]^{T}\right]\mathbf{U}_{\tau}^{t}\label{eq:Ueq}
\end{equation}
(see, e.g, Truesdell \& Rajagopal \cite{Truesdell09}) reveals that
$\dot{\mathbf{U}}_{\tau}^{t}\left[\mathbf{U}_{\tau}^{t}\right]^{-1}$
is not symmetric, and hence the evolution of the polar stretch tensor
is \emph{not} free from spin. Therefore, the polar decomposition does
not fully separate a purely spinning component from a non-spinning
component in the deformation. In contrast, the dynamic polar decomposition
separates a purely spinning part of the deformation gradient (cf.
\eqref{eq:Odef}) from a purely straining part with zero spin (cf.
\eqref{eq:Mdef}). By construction (cf. the proof of Theorem 2), $\mathbf{M}_{\tau}^{t}$
is the unique spin-free stretch tensor out of the infinitely many
possible ones in \eqref{eq:infinitely_many}.
\end{rem}

\begin{rem}
\label{remark:objectivity}As seen from formulas \eqref{eq:transfDPD-1}-\eqref{eq:transfDPD-2}
in the proof of Theorem 2, a general observer change \eqref{eq:objective}
transforms the dynamic rotation and stretch tensors to the form 
\[
\mathbf{\tilde{O}}_{\tau}^{t}=\mathbf{\mathbf{Q}}^{T}(t)\mathbf{O}_{\tau}^{t}\mathbf{\mathbf{Q}}(\tau),\qquad\mathbf{\mathbf{\tilde{M}}}_{\tau}^{t}=\mathbf{\mathbf{Q}}^{T}(\tau)\mathbf{M}_{\tau}^{t}\mathbf{\mathbf{Q}}(\tau),\qquad\mathbf{\mathbf{\tilde{N}}}_{\tau}^{t}=\mathbf{\mathbf{Q}}^{T}(t)\mathbf{N}_{\tau}^{t}\mathbf{\mathbf{Q}}(t)
\]
in the $\mathbf{y}$ coordinate frame. Thus, the left stretch tensor
$\mathbf{\mathbf{N}}_{\tau}^{t}$ is objective but the right stretch
tensor $\mathbf{\mathbf{M}}_{\tau}^{t}$ is not. Analogously, the
left polar stretch tensor $\mathbf{\mathbf{V}}_{\tau}^{t}$ is objective
but the right polar stretch tensor $\mathbf{\mathbf{U}}_{\tau}^{t}$
is not (cf. Truesdell \& Rajagopal \cite{Truesdell09}).
\end{rem}

\begin{rem}
The relationship \eqref{eq:DPD} gives
\[
\mathbf{M}_{\tau}^{t}=\mathbf{O}_{t}^{\tau}\mathbf{N}_{\tau}^{t}\mathbf{O}_{\tau}^{t}=\left[\mathbf{O}_{\tau}^{t}\right]^{-1}\mathbf{N}_{\tau}^{t}\mathbf{O}_{\tau}^{t}=\left[\mathbf{O}_{\tau}^{t}\right]^{T}\mathbf{N}_{\tau}^{t}\mathbf{O}_{\tau}^{t},
\]
revealing that the right dynamic stretch tensor is just the representation
of the left dynamic stretch tensor in a coordinate frame rotating
under the action of $\mathbf{O}_{\tau}^{t}$. Similarly, eq. \eqref{eq:MODE}
shows that the stretch rate tensor $\mathbf{\dot{M}}_{\tau}^{t}\left[\mathbf{M}_{\tau}^{t}\right]^{-1}$
is just the rate of strain tensor $\mathbf{D}$ represented in the
same rotating frame.
\end{rem}

\begin{rem}
The stretch tensors $\mathbf{\mathbf{M}}_{\tau}^{t}$ and $\mathbf{\mathbf{N}}_{\tau}^{t}$
are also fundamental matrix solutions, yet $\mathbf{\mathbf{M}}_{\tau}^{t}\neq\mathbf{\mathbf{M}}_{s}^{t}\mathbf{\mathbf{M}}_{\tau}^{s}$
and $\mathbf{\mathbf{N}}_{\tau}^{t}\neq\mathbf{\mathbf{N}}_{s}^{t}\mathbf{\mathbf{N}}_{\tau}^{s}.$
This is because the linear systems of ODEs \eqref{eq:MODE}-\eqref{eq:NODE}
are not of the classical type: they have right-hand sides depending
explicitly on the initial time $\tau$ as well. As a consequence,
their fundamental matrix solutions do not form processes. However,
the nonlinear system of differential equations \eqref{eq:OODE}-\eqref{eq:MODE}
has no explicit dependence on $\tau$ when posed for the dependent
variable $\mathbf{H}_{\tau}^{t}=(\mathbf{O}_{\tau}^{t},\mathbf{\mathbf{M}}_{\tau}^{t})$.
As a consequence, the nonlinear process property
\[
\mathbf{H}_{\tau}^{t}=\mathbf{H}_{s}^{t}\circ\mathbf{H}_{\tau}^{s}
\]
holds for this system of equations, and hence the pair $(\mathbf{O}_{\tau}^{t},\mathbf{\mathbf{M}}_{\tau}^{t})$
forms a nonlinear dynamical system. This is not the case for the polar
rotation-stretch pair $(\mathbf{R}_{\tau}^{t},\mathbf{\mathbf{U}}_{\tau}^{t})$
(cf. Appendix A). 
\end{rem}

\begin{rem}
The DPD of the deformation gradient in Theorem \ref{theo:FDPD} replaces
the requirement of symmetry for the polar stretch tensors $\mathbf{\mathbf{U}}_{\tau}^{t}$
and $\mathbf{\mathbf{V}}_{\tau}^{t}$ with the requirement that the
dynamic stretch tensors be deformations generated by purely straining
velocity fields. As noted in statement (ii) of Theorem \ref{theo:FDPD},
$\mathbf{\mathbf{M}}_{\tau}^{t}$ and $\mathbf{\mathbf{N}}_{\tau}^{t}$
still have the same singular values and corresponding principal axes
of strain as their polar equivalents. Thus, they continue to capture
the same objective information about stretch encoded in the right
and left Cauchy-Green strain tensors, $\mathbf{\mathbf{C}}_{\tau}^{t}=\left(\mathbf{\mathbf{F}}_{\tau}^{t}\right)^{T}\mathbf{\mathbf{F}}_{\tau}^{t}$
and $\mathbf{B}_{\tau}^{t}=\mathbf{\mathbf{F}}_{\tau}^{t}\left(\mathbf{\mathbf{F}}_{\tau}^{t}\right)^{T}$.
\end{rem}

\begin{rem}
{[}\emph{Connections with prior work}{]}\label{remark:prior work}
Without the claim of uniqueness, the first dynamic decomposition $\mathbf{\mathbf{F}}_{\tau}^{t}=\mathbf{O}_{\tau}^{t}\mathbf{M}_{\tau}^{t}$
in \eqref{eq:DPDdef} and the two equations \eqref{eq:OODE}-\eqref{eq:MODE}
could also be obtained by first extending a technical result (Theorem
1 of Epstein \cite{epstein62}) on linear differential equations to
arbitrary initial times $\tau$, and then applying this extension
to the equation of variations \eqref{eq:vari}. Also, the finite rotation
family generated by eq. \eqref{eq:OODE} is just the one considered
by Noll \cite{noll55} (p. 27) to derive isotropy-based invariance
condition for general class of (hygrosteric) constitutive laws. In
that context, however, $\mathbf{O}_{\tau}^{t}$ was selected in an
ad hoc fashion out of infinitely many possible rotations because of
the simplicity of its associated rotation rate $\dot{\mathbf{O}}_{\tau}^{t}\mathbf{O}_{\tau}^{t}\mathbf{=W}$.
Finally, eq. \eqref{eq:OODE} also appears formally in the work of
Rubenstein and Atluri \cite{rubinstein83} (see their eq. (41)). They,
however, propose this ODE merely as one generating a plausible rotating
frame in which to study deformation, as opposed to one deduced from
a systematic decomposition of the deformation gradient.
\end{rem}

\section{The relative rotation tensor }

The left dynamic stretch tensor obtained from the DPD of the deformation
gradient are objective, but the dynamic rotation tensor is not. This
is due to the inherent dependence of rigid body rotation on the reference
frame. For deforming bodies, however, there is a non-vanishing part
of the dynamic rotation that deviates from the spatial mean rotation
of the body. This relative rotation is not only dynamically consistent,
but also turns out to be objective for planar deformations.

To state this result more formally for a deforming body $\mathcal{B}(t)=\mathbf{X}_{\tau}^{t}(\mathcal{B}(\tau))$,
we denote the spatial mean of any quantity $\left(\,\cdot\,\right)(\mathbf{x},t)$
defined on $\mathcal{B}(t)$ by
\[
\overline{\left(\,\cdot\,\right)}(t)=\frac{1}{\mathrm{vol}\,(\mathcal{B}(t))}\int_{\mathcal{B}(t)}\left(\,\cdot\,\right)(\mathbf{x},t)\,dV,
\]
where $\mathrm{vol}\,(\,\,\,)$ denotes the volume for three-dimensional
bodies, and the area for two-dimensional bodies. Accordingly, $dV$
refers to the volume or area element, respectively. 
\begin{thm}
\label{theo:relativerot}{[}Relative and mean rotation tensors{]}\end{thm}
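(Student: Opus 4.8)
The plan is to build both factors directly from the dynamic rotation tensor $\mathbf{O}_{\tau}^{t}$ of Theorem \ref{theo:FDPD} by splitting its generator $\mathbf{W}$ into its spatial mean $\overline{\mathbf{W}}(t)$ and the fluctuation $\mathbf{W}-\overline{\mathbf{W}}(t)$. First I would define the \emph{mean rotation tensor} $\overline{\mathbf{O}}_{\tau}^{t}$ as the fundamental matrix solution of $\dot{\overline{\mathbf{O}}}_{\tau}^{t}=\overline{\mathbf{W}}(t)\,\overline{\mathbf{O}}_{\tau}^{t}$, $\overline{\mathbf{O}}_{\tau}^{\tau}=\mathbf{I}$. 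Since $\overline{\mathbf{W}}(t)$ is skew-symmetric and \emph{spatially constant}, Definition \ref{def:rotational}(i) gives $\overline{\mathbf{O}}_{\tau}^{t}\in SO(n)$ at once, and the argument that established \eqref{eq:consistencyofO} for $\mathbf{O}_{\tau}^{t}$ shows verbatim that $\overline{\mathbf{O}}_{\tau}^{t}=\overline{\mathbf{O}}_{s}^{t}\overline{\mathbf{O}}_{\tau}^{s}$ is a memory-free process, identical at every material point. This settles the mean-rotation half of the claim, and in the planar case it yields the dynamic analogue of Cauchy's mean rotation angle $\overline{\phi}_{\tau}^{t}=\tfrac{1}{2}\int_{\tau}^{t}\overline{\omega}(s)\,ds$, with $\overline{\omega}$ the spatial mean vorticity.

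Next I would define the \emph{relative rotation tensor} $\mathbf{\Theta}_{\tau}^{t}$ as the fundamental matrix solution of the fluctuation-driven problem $\dot{\mathbf{\Theta}}_{\tau}^{t}=\left[\mathbf{W}(\mathbf{x}(t),t)-\overline{\mathbf{W}}(t)\right]\mathbf{\Theta}_{\tau}^{t}$, $\mathbf{\Theta}_{\tau}^{\tau}=\mathbf{I}$. Because the difference of two skew-symmetric matrices is skew, Definition \ref{def:rotational}(i) again gives $\mathbf{\Theta}_{\tau}^{t}\in SO(n)$, and since the generator depends only on the current time $t$, $\mathbf{\Theta}_{\tau}^{t}$ is a linear process, hence dynamically consistent in the strong additive sense $\mathbf{\Theta}_{\tau}^{t}=\mathbf{\Theta}_{s}^{t}\mathbf{\Theta}_{\tau}^{s}$. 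To obtain the advertised multiplicative splitting I would differentiate the product $\overline{\mathbf{O}}_{\tau}^{t}\mathbf{\Theta}_{\tau}^{t}$ and compare with \eqref{eq:OODE}: the two coincide exactly when $\mathbf{\Theta}_{\tau}^{t}$ commutes with $\overline{\mathbf{W}}(t)$. In a planar deformation both objects live in the abelian group $SO(2)$, so the commutation is automatic and the clean decomposition $\mathbf{O}_{\tau}^{t}=\overline{\mathbf{O}}_{\tau}^{t}\mathbf{\Theta}_{\tau}^{t}$ follows, together with the relative rotation angle $\phi_{\tau}^{t}-\overline{\phi}_{\tau}^{t}=\tfrac{1}{2}\int_{\tau}^{t}\left[\omega(\mathbf{x}(s),s)-\overline{\omega}(s)\right]ds$.

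For objectivity I would start from the transformation law of the spin, $\tilde{\mathbf{W}}=\mathbf{Q}^{T}\mathbf{W}\mathbf{Q}+\mathbf{Q}^{T}\dot{\mathbf{Q}}$ (the precise sign being immaterial), and observe that its inhomogeneous part is spatially constant, so it passes unchanged through the spatial average and \emph{cancels} in the fluctuation: $\widetilde{\mathbf{W}-\overline{\mathbf{W}}}=\mathbf{Q}^{T}(\mathbf{W}-\overline{\mathbf{W}})\mathbf{Q}$. Thus the relative spin is a genuinely objective tensor. In the planar case the fluctuation reduces to the frame-indifferent scalar $\omega-\overline{\omega}$ multiplying the fixed generator of $SO(2)$, so the initial-value problem defining $\mathbf{\Theta}_{\tau}^{t}$ is literally unchanged under the observer transformation \eqref{eq:objective}, forcing $\tilde{\mathbf{\Theta}}_{\tau}^{t}=\mathbf{\Theta}_{\tau}^{t}$ and proving frame indifference of the relative rotation for planar deformations; the mean-rotation transformation rule needed here is the analogue of the one recorded in Remark \ref{remark:objectivity}.

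The step I expect to be the main obstacle is the interface between dynamic consistency and objectivity in dimension three. There the relative spin $\mathbf{W}-\overline{\mathbf{W}}$ is still objective but no longer commutes with itself at different times, so its time-ordered exponential $\mathbf{\Theta}_{\tau}^{t}$ no longer commutes with the frame rotation $\mathbf{Q}(t)$; consequently $\mathbf{\Theta}_{\tau}^{t}$ remains a dynamically consistent process but ceases to be frame-indifferent, and the product $\overline{\mathbf{O}}_{\tau}^{t}\mathbf{\Theta}_{\tau}^{t}$ need not reproduce $\mathbf{O}_{\tau}^{t}$. This non-commutativity is precisely why the clean factorization and the objectivity must be confined to planar deformations, where the abelian structure of $SO(2)$ removes it. I would therefore prove the process property of the mean and relative rotations and their governing equations in general dimension, and reserve the multiplicative decomposition, the objectivity statement, and the explicit angle formulas for $n=2$.
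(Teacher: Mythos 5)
Your construction of the relative rotation tensor as the fundamental solution of $\dot{\mathbf{\boldsymbol{\Phi}}}_{\tau}^{t}=[\mathbf{W}-\bar{\mathbf{W}}]\mathbf{\boldsymbol{\Phi}}_{\tau}^{t}$ coincides with the paper's, and your two-dimensional objectivity argument (the inhomogeneous term in the spin transformation is spatially constant, hence cancels in the fluctuation, and $SO(2)$ is abelian) is essentially the argument of Appendix D. The genuine gap is elsewhere: you obtain the multiplicative decomposition of $\mathbf{O}_{\tau}^{t}$ only for planar deformations, whereas statement (i) of the theorem asserts the factorizations \eqref{eq:object_decomp} in three dimensions as well, restricting only the \emph{objectivity} of $\mathbf{\boldsymbol{\Phi}}_{\tau}^{t}$ to the planar case. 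The obstruction you identify --- that a mean rotation generated directly by $\bar{\mathbf{W}}(t)$ fails to commute with $\mathbf{\boldsymbol{\Phi}}_{\tau}^{t}$ --- is real, but the correct response is not to weaken the claim; it is to define the mean rotation factor differently. The paper splits
\begin{equation*}
\dot{\mathbf{O}}_{\tau}^{t}=\left[\mathbf{W}-\bar{\mathbf{W}}\right]\mathbf{\boldsymbol{\Phi}}_{\tau}^{t}\mathbf{\boldsymbol{\Theta}}_{\tau}^{t}+\bar{\mathbf{W}}\,\mathbf{\boldsymbol{\Phi}}_{\tau}^{t}\mathbf{\boldsymbol{\Theta}}_{\tau}^{t}
\end{equation*}
and matches the first term with $\dot{\mathbf{\boldsymbol{\Phi}}}_{\tau}^{t}\mathbf{\boldsymbol{\Theta}}_{\tau}^{t}$ and the second with $\mathbf{\boldsymbol{\Phi}}_{\tau}^{t}\dot{\mathbf{\boldsymbol{\Theta}}}_{\tau}^{t}$, which forces
\begin{equation*}
\dot{\mathbf{\boldsymbol{\Theta}}}_{\tau}^{t}=\left[\mathbf{\boldsymbol{\Phi}}_{t}^{\tau}\bar{\mathbf{W}}(t)\mathbf{\boldsymbol{\Phi}}_{\tau}^{t}\right]\mathbf{\boldsymbol{\Theta}}_{\tau}^{t},\qquad\mathbf{\boldsymbol{\Theta}}_{\tau}^{\tau}=\mathbf{I}.
\end{equation*}
The conjugation by $\mathbf{\boldsymbol{\Phi}}$ absorbs exactly the non-commutativity that blocks your product formula, so the decomposition and its uniqueness (from uniqueness of solutions of these initial value problems) hold in any dimension.

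The price --- which the theorem accepts --- is that $\mathbf{\boldsymbol{\Theta}}_{\tau}^{t}$ is then \emph{not} a process, since its generator depends explicitly on $\tau$; only $\mathbf{\boldsymbol{\Phi}}_{\tau}^{t}$ is claimed to be a rotational process. Your identification of the mean rotation with the memory-free process $\overline{\mathbf{O}}_{\tau}^{t}$ generated by $\bar{\mathbf{W}}(t)$ is therefore not the object the theorem describes, and the process property you prove for it is not among the claims. You also omit the second decomposition $\mathbf{O}_{\tau}^{t}=\mathbf{\boldsymbol{\Sigma}}_{\tau}^{t}\mathbf{\boldsymbol{\Phi}}_{\tau}^{t}$, which the paper obtains from the transpose identity $\mathbf{O}_{\tau}^{t}=(\mathbf{O}_{t}^{\tau})^{T}=(\mathbf{\boldsymbol{\Theta}}_{t}^{\tau})^{T}(\mathbf{\boldsymbol{\Phi}}_{t}^{\tau})^{T}$, i.e.\ $\mathbf{\boldsymbol{\Sigma}}_{\tau}^{t}=(\mathbf{\boldsymbol{\Theta}}_{t}^{\tau})^{T}$, together with the $\tau$-differential equation \eqref{eq:SIGMAODE}.
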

\begin{description}
\item [{\emph{(i)}}] \emph{The dynamic rotation tensor $\mathbf{\mathbf{O}}_{\tau}^{t}$
admits a unique decomposition of the form 
\begin{equation}
\mathbf{O}_{\tau}^{t}=\mathbf{\mathbf{\boldsymbol{\Phi}}}_{\tau}^{t}\mathbf{\boldsymbol{\Theta}}_{\tau}^{t}=\mathbf{\boldsymbol{\Sigma}}_{\tau}^{t}\mathbf{\mathbf{\boldsymbol{\Phi}}}_{\tau}^{t},\label{eq:object_decomp}
\end{equation}
where the relative rotation tensor $\mathbf{\mathbf{\boldsymbol{\Phi}}}_{\tau}^{t}$
and the mean rotation tensors $\mathbf{\mathbf{\boldsymbol{\Theta}}}_{\tau}^{t}$
and $\mathbf{\mathbf{\boldsymbol{\Sigma}}}_{\tau}^{t}$ satisfy the
initial value problems
\begin{eqnarray}
\dot{\mathbf{\mathbf{\boldsymbol{\Phi}}}}_{\tau}^{t} & = & \left[\mathbf{W}\left(x(t),t\right)-\bar{\mathbf{W}}\left(t\right)\right]\mathbf{\mathbf{\boldsymbol{\Phi}}}_{\tau}^{t},\qquad\mathbf{\mathbf{\boldsymbol{\Phi}}}_{\tau}^{\tau}=\mathbf{I},\label{eq:PHIODE}\\
\dot{\mathbf{\boldsymbol{\Theta}}}_{\tau}^{t} & = & \left[\mathbf{\mathbf{\boldsymbol{\Phi}}}_{t}^{\tau}\bar{\mathbf{W}}\left(t\right)\mathbf{\mathbf{\boldsymbol{\Phi}}}_{\tau}^{t}\right]\mathbf{\boldsymbol{\Theta}}_{\tau}^{t},\qquad\mathbf{\qquad\boldsymbol{\,\,\,\,\,\Theta}}_{\tau}^{\tau}=\mathbf{I},\label{eq:THETAODE}\\
\frac{d}{d\tau}\left(\mathbf{\boldsymbol{\Sigma}}_{\tau}^{t}\right)^{T} & = & \left[\mathbf{\mathbf{\boldsymbol{\Phi}}}_{\tau}^{t}\bar{\mathbf{W}}(\tau)\mathbf{\mathbf{\boldsymbol{\Phi}}}_{t}^{\tau}\right]\left(\mathbf{\mathbf{\boldsymbol{\Sigma}}}_{\tau}^{t}\right)^{T},\qquad\mathbf{\mathbf{\boldsymbol{\,\,\,\,\,\,\,\,\,\Sigma}}}_{t}^{t}=\mathbf{I}.\label{eq:SIGMAODE}
\end{eqnarray}
}
\item [{\emph{(ii)}}] \emph{The relative rotation tensor $\mathbf{\mathbf{\boldsymbol{\Phi}}}_{\tau}^{t}$
is a rotational process. For two-dimensional deformations, $\mathbf{\mathbf{\boldsymbol{\Phi}}}_{\tau}^{t}$
is also objective.}\end{description}
\begin{proof}
See Appendix D.
\end{proof}

The joint application of Theorems \ref{theo:FDPD} and \ref{theo:relativerot}
gives four possible decompositions of the deformation gradient:
\[
\mathbf{\mathbf{F}}_{\tau}^{t}=\mathbf{\mathbf{\boldsymbol{\Phi}}}_{\tau}^{t}\mathbf{\boldsymbol{\Theta}}_{\tau}^{t}\mathbf{M}_{\tau}^{t}=\mathbf{\boldsymbol{\Sigma}}_{\tau}^{t}\mathbf{\mathbf{\boldsymbol{\Phi}}}_{\tau}^{t}\mathbf{M}_{\tau}^{t}=\mathbf{N}_{\tau}^{t}\mathbf{\mathbf{\boldsymbol{\Phi}}}_{\tau}^{t}\mathbf{\boldsymbol{\Theta}}_{\tau}^{t}=\mathbf{N}_{\tau}^{t}\mathbf{\boldsymbol{\Sigma}}_{\tau}^{t}\mathbf{\mathbf{\boldsymbol{\Phi}}}_{\tau}^{t}.
\]
The relative rotation tensor $\mathbf{\mathbf{\boldsymbol{\Phi}}}_{\tau}^{t}$
is dynamically consistent and objective in two dimensions; the right
and left mean rotation tensors, $\mathbf{\mathbf{\boldsymbol{\Theta}}}_{\tau}^{t}$
and $\mathbf{\mathbf{\boldsymbol{\Sigma}}}_{\tau}^{t}$, are frame-dependent
rotational operator families. While the relative rotation tensor $\mathbf{\mathbf{\boldsymbol{\Phi}}}_{\tau}^{t}$
is generally not objective for three-dimensional deformations, it
still remains frame-invariant under all rotations $\mathbf{\mathbf{Q}}(t)$
whose rotation-rate tensor $\dot{\mathbf{\mathbf{Q}}}^{T}(t)\mathbf{\mathbf{Q}}(t)$
commutes with $\mathbf{\mathbf{\boldsymbol{\Phi}}}_{\tau}^{t}$(cf.
formula \eqref{eq:finobj2} of Appendix D).
\begin{rem}
From equations \eqref{eq:At0}, \eqref{eq:Aform1}, \eqref{eq:Aform2}
and \eqref{eq:inverse-1} of Appendix D, we deduce the following transformation
formulas for the rotation tensors featured in Theorem 3, under observer
changes of the form \eqref{eq:objective}:
\[
\tilde{\mathbf{\mathbf{\boldsymbol{\Phi}}}}_{\tau}^{t}=\mathbf{Q}^{T}(t)\mathbf{\mathbf{\boldsymbol{\Phi}}}_{\tau}^{t}\mathbf{P}(t),\qquad\tilde{\mathbf{\boldsymbol{\Theta}}}_{\tau}^{t}=\mathbf{P}^{T}(t)\mathbf{\boldsymbol{\Theta}}_{\tau}^{t}\mathbf{Q}(\tau),\qquad\mathbf{\boldsymbol{\tilde{\Sigma}}}_{\tau}^{t}=\mathbf{Q}^{T}(t)\mathbf{\boldsymbol{\Sigma}}_{\tau}^{t}\mathbf{Q}(\tau).
\]
Here the rotation tensor $\mathbf{P}(t)\in SO(3)$ satisfies the linear
initial value problem
\[
\dot{\mathbf{P}}(t)=\mathbf{\mathbf{\boldsymbol{\Phi}}}_{t}^{\tau}\dot{\mathbf{Q}}(t)\mathbf{Q}^{T}(t)\mathbf{\mathbf{\boldsymbol{\Phi}}}_{\tau}^{t}\mathbf{P}(t),\qquad\mathbf{P}(\tau)=\mathbf{Q}(\tau).
\]

\end{rem}

\section{Dynamically consistent angular velocity and mean rotation angles}

\subsection{Angular velocity from the dynamic rotation tensor\label{sub:Angular-velocity}}

By equations \eqref{eq:rotationderivative} and \eqref{eq:OODE},
the time-derivatives of the rotation tensor and the dynamic rotation
tensor agree in the limit of infinitesimally short deformations: 
\[
\dot{\mathbf{R}}_{\tau}^{t}\vert_{t=\tau}=\dot{\mathbf{O}}_{\tau}^{t}\vert_{t=\tau}=\mathbf{W}.
\]
As noted in the Introduction, however, the polar rotation does not
give a well-defined, history-independent angular velocity for finite
deformations. At the same time, the dynamic rotation gives the same
angular velocity (deduced from $\mathbf{W}$) both for infinitely
short and for finite deformations. This angular velocity equals the
mean rotation rate of material fibers in two dimensions (Cauchy \cite{cauchy41}).
Here we show that the same equality holds for three-dimensional deformations
as well.

Clearly, the rotation of an infinitesimal rigid sphere in a fluid
differs from the rotation of infinitesimal material fibers in the
fluid. Each such material fiber rotates with a different angular velocity,
even in the simplest two-dimensional steady flows (see Examples 2-3
below) Nevertheless, for all two-dimensional deformations, Cauchy
\cite{cauchy41} found that averaging the angular velocity over all
material fibers emanating from the same point gives a mean angular
velocity equal to $\frac{1}{2}\mathbf{\boldsymbol{\omega}}$ (see
also Truesdell \cite{truesdell54}). This justifies the use of small
spherical tracers to infer the rate of local mean material rotation
in two-dimensional continuum motion (see, e.g., the experiments of
Shapiro \cite{Shapiro61} for fluids). 

In three-dimensional continuum motion, the Maxey-Riley equations (Maxey
\cite{Maxey90}) continue to predict $\frac{1}{2}\mathbf{\boldsymbol{\omega}}$
as the angular velocity of small spherical particles. Experiments
on three-dimensional turbulence confirm this result (see, e.g., Meyer
et al. \cite{meyer13}). One would ideally need, however, an extension
of Cauchy's fiber-averaged angular velocity argument from two to three-dimensions
to justify equating the observed rotation rate of small rigid spheres
with the local mean rate of material rotation. 

The main challenge for such an extension is that a one-dimensional
material element has no well-defined angular velocity in three dimensions.
To see this, we let 
\begin{equation}
\mathbf{e}(t)=\frac{\mathbf{\mathbf{F}}_{\tau}^{t}\mathbf{e}(\tau)}{\left|\mathbf{\mathbf{F}}_{\tau}^{t}\mathbf{e}(\tau)\right|},\label{eq:def_e(t)}
\end{equation}
denote a unit vector tangent to a deforming material fiber along the
trajectory $\mathbf{\mathbf{x}}(t)$. This trajectory starts from
the point $\mathbf{\mathbf{x}}_{\tau}$ at time $\tau$, as shown
in Fig. \ref{fig:fiber}. 
\begin{figure}[H]
\begin{centering}
\includegraphics[width=0.5\textwidth]{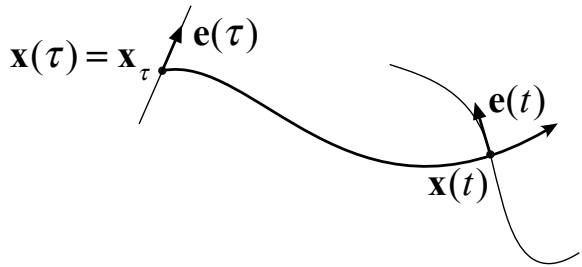}
\par\end{centering}

\caption{The unit vector $\mathbf{e}(t)$ tangent to a material fiber evolving
along the trajectory $\mathbf{\mathbf{x}}(t)$. }
\label{fig:fiber}
\end{figure}
There exists then an open half-plane $\mathcal{P}$ spanned by admissible
angular velocity vectors $\mbox{\ensuremath{\boldsymbol{\nu}}}$ such
that the instantaneous velocity $\dot{\mathbf{e}}$ of the evolving
$\mathbf{e}(t)$ satisfies $\mathbf{\dot{e}}=\mathbf{\boldsymbol{\nu}}\times\mathbf{e}$.
The magnitudes of these admissible angular velocity vectors range
from $\left|\dot{\mathbf{e}}\right|$ to infinity, depending on the
angle they enclose with $\mathbf{e}$ (see Fig. \ref{fig:plane}).
There is, therefore, no unique angular velocity for the evolving material
fiber tangent to $\mathbf{e}.$

\begin{figure}[H]
\begin{centering}
\includegraphics[width=0.55\textwidth]{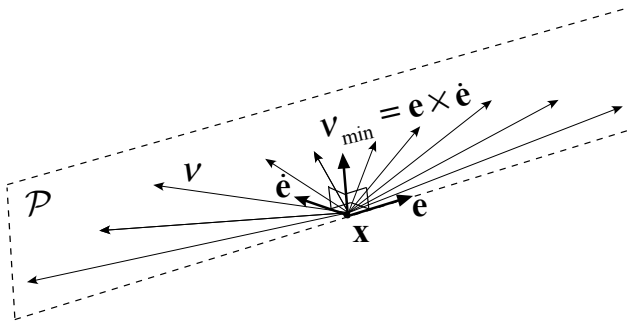}
\par\end{centering}

\caption{The plane $\mathcal{P}$ of all admissible angular velocities $\mathbf{\boldsymbol{\nu}}$
that generate the same velocity $\dot{\mathbf{e}}$ for a unit vector
$\mathbf{e}$ tangent to a deforming material element at $\mathbf{\mathbf{x}}$. }
\label{fig:plane}

\end{figure}

We can nevertheless extend Cauchy's mean rotation result to three-dimension
using the following construct. Let us define the \emph{minimal angular
velocity} vector $\mathbf{\boldsymbol{\nu}}_{\min}(\mathbf{\mathbf{x}},t,\mathbf{e})$
for the unit vector $\mathbf{e}$ as the admissible angular velocity
in $\mathcal{P}$ with the smallest possible norm: 
\begin{equation}
\mathbf{\boldsymbol{\nu}}_{\min}(\mathbf{x},t,\mathbf{e})=\mathbf{e}\times\dot{\mathbf{e}.}\label{eq:nu_min}
\end{equation}
 We then define the \emph{material-fiber-averaged angular velocity}
$\mathbf{\boldsymbol{\nu}}(\mathbf{\mathbf{x}},t)$ at the point $\mathbf{\mathbf{x}}$
of a deforming body $\mathcal{B}(t)$ by the formula
\begin{equation}
\mathbf{\boldsymbol{\nu}}(x,t):=2\left\langle \mathbf{\boldsymbol{\nu}}_{\min}(\mathbf{x},t,\mathbf{e})\right\rangle _{\mathbf{e}\in S_{\mathbf{x}}^{2}},\qquad\mathbf{x}\in\mathcal{B}(t),\label{eq:nugeneral}
\end{equation}
with the $\left\langle \,\cdot\,\right\rangle _{\mathbf{e}\in S_{\mathbf{x}}^{2}}$
operation referring to the mean over all vectors in the unit sphere
$S_{\mathbf{\mathbf{x}}}^{2}$ centered at the point $\mathbf{x}$.
For a perfectly rigid body, we recover from formula \eqref{eq:nugeneral}
the unique angular velocity of the body as the fiber-averaged angular
velocity (see Appendix E). For a general deformable continuum, $\mbox{\ensuremath{\boldsymbol{\nu}}}(\mathbf{\mathbf{x}},t)$
still turns out to be computable and equal to half of the vorticity.
\begin{prop}
\label{prop:Fiber averaged 3D}{[}Fiber-averaged angular velocity
in 3D{]} For a general three-dimensional deforming body $\mathcal{B}(t)$,
the material-fiber-averaged angular velocity at a location $\mathbf{\mathbf{x}}\in\mathcal{B}(t)$
at time $t$ is given by 
\[
\mathbf{\boldsymbol{\nu}}(\mathbf{x},t)\equiv\frac{1}{2}\mathbf{\boldsymbol{\omega}}(\mathbf{x},t),
\]
 where $\mathbf{\boldsymbol{\omega}}(\mathbf{\mathbf{x}},t)$ denotes
the vorticity vector field of $\mathcal{B}(t)$ .\end{prop}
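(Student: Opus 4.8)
The plan is to reduce the fiber-averaged angular velocity to a pointwise Eulerian quantity and then evaluate the resulting spherical average by splitting the velocity gradient into its symmetric and skew parts. First I would differentiate the unit fiber \eqref{eq:def_e(t)}. Because $\mathbf{F}_{\tau}^{t}$ is the fundamental solution of the equation of variations \eqref{eq:vari}, the raw fiber $\mathbf{p}(t)=\mathbf{F}_{\tau}^{t}\mathbf{e}(\tau)$ obeys $\dot{\mathbf{p}}=\nabla\mathbf{v}(\mathbf{x}(t),t)\,\mathbf{p}$, and differentiating the normalization in \eqref{eq:def_e(t)} gives $\dot{\mathbf{e}}=\nabla\mathbf{v}\,\mathbf{e}-\left(\mathbf{e}\cdot\nabla\mathbf{v}\,\mathbf{e}\right)\mathbf{e}$, with $\nabla\mathbf{v}$ taken at $(\mathbf{x},t)$. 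Substituting this into the minimal angular velocity \eqref{eq:nu_min} and using $\mathbf{e}\times\mathbf{e}=\mathbf{0}$, the projection term drops out, leaving the purely local expression $\mathbf{\boldsymbol{\nu}}_{\min}(\mathbf{x},t,\mathbf{e})=\mathbf{e}\times\nabla\mathbf{v}(\mathbf{x},t)\,\mathbf{e}$. In particular $\mathbf{\boldsymbol{\nu}}(\mathbf{x},t)$ depends only on the velocity gradient at $(\mathbf{x},t)$, with no dependence on the observation history or on $\tau$.

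Next I would insert the decomposition $\nabla\mathbf{v}=\mathbf{D}+\mathbf{W}$ from \eqref{eq:WDdef}, so that $\mathbf{\boldsymbol{\nu}}_{\min}=\mathbf{e}\times\mathbf{D}\mathbf{e}+\mathbf{e}\times\mathbf{W}\mathbf{e}$ and, by linearity of the averaging in \eqref{eq:nugeneral}, the strain and spin parts contribute separately. The decisive claim is that the straining part averages to zero: $\left\langle\mathbf{e}\times\mathbf{D}\mathbf{e}\right\rangle_{\mathbf{e}\in S_{\mathbf{x}}^{2}}=\mathbf{0}$. I would establish this by passing to the principal axes of the symmetric tensor $\mathbf{D}=\mathrm{diag}(d_{1},d_{2},d_{3})$, where each Cartesian component of $\mathbf{e}\times\mathbf{D}\mathbf{e}$ takes the form $(d_{k}-d_{j})\,e_{j}e_{k}$ with $j\neq k$, an odd function of $e_{j}$ whose mean over the sphere vanishes. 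Equivalently, for any $\mathbf{A}$ one has $\left\langle\mathbf{e}\times\mathbf{A}\mathbf{e}\right\rangle=\frac{2}{3}\mathbf{a}$, where $\mathbf{a}$ is the axial vector of the skew part of $\mathbf{A}$; for symmetric $\mathbf{D}$ this axial vector is zero.

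It then remains to evaluate the spin contribution. Writing $\mathbf{W}\mathbf{e}=\mathbf{w}\times\mathbf{e}$ for the axial vector $\mathbf{w}$ of $\mathbf{W}$, the identity $\mathbf{e}\times(\mathbf{w}\times\mathbf{e})=\mathbf{w}-(\mathbf{e}\cdot\mathbf{w})\,\mathbf{e}$ together with the isotropy relation $\left\langle\mathbf{e}\,\mathbf{e}^{T}\right\rangle_{S_{\mathbf{x}}^{2}}=\frac{1}{3}\mathbf{I}$ gives $\left\langle\mathbf{e}\times\mathbf{W}\mathbf{e}\right\rangle=\frac{2}{3}\mathbf{w}$, which by \eqref{eq:WDdef} is collinear with the vorticity, since $\mathbf{w}=\frac{1}{2}\mathbf{\boldsymbol{\omega}}$. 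To fix the prefactor in \eqref{eq:nugeneral} and finish, I would observe that, the strain average having vanished, the fiber average of the general deforming body coincides with that of the instantaneously rigid field $\dot{\mathbf{x}}=\mathbf{W}\mathbf{x}$, i.e. a rigid rotation with angular velocity $\mathbf{w}=\frac{1}{2}\mathbf{\boldsymbol{\omega}}$. By the rigid-body property recorded just before the proposition (Appendix E), the normalized fiber average of a rigid rotation returns its own angular velocity; applied here this yields $\mathbf{\boldsymbol{\nu}}(\mathbf{x},t)=\frac{1}{2}\mathbf{\boldsymbol{\omega}}(\mathbf{x},t)$, as claimed.

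I expect the main obstacle to be the vanishing of the strain average, $\left\langle\mathbf{e}\times\mathbf{D}\mathbf{e}\right\rangle=\mathbf{0}$: this is the conceptual core of the result, since it is precisely the insensitivity of the fiber-averaged rotation to stretching that allows the mean material rotation to equal the vorticity-based rotation even in a straining flow. The remaining work is bookkeeping --- confirming the elementary spherical integrals $\left\langle e_{i}e_{j}\right\rangle=\frac{1}{3}\delta_{ij}$ and the parity of the odd monomials, and checking that the prefactor in \eqref{eq:nugeneral} is the one calibrated by the rigid-body reduction so that the surviving spin contribution is normalized to exactly $\frac{1}{2}\mathbf{\boldsymbol{\omega}}$.
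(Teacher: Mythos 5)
Your strategy coincides with the paper's own proof in Appendix F: differentiate the normalized fiber to obtain $\dot{\mathbf{e}}=\left[\mathbf{W}+\mathbf{D}-\left\langle \mathbf{e},\mathbf{D}\mathbf{e}\right\rangle \mathbf{I}\right]\mathbf{e}$ (the paper's \eqref{eq:edotform}), take the cross product with $\mathbf{e}$ so that the projection term drops, split the resulting average into a strain part and a spin part, annihilate the strain part in the principal frame of $\mathbf{D}$, and reduce the spin part to the rigid-body computation of Appendix E. Your parity argument for $\left\langle \mathbf{e}\times\mathbf{D}\mathbf{e}\right\rangle =\mathbf{0}$ (odd monomials $e_{j}e_{k}$, $j\neq k$, integrate to zero) is a cleaner packaging of the paper's explicit spherical-coordinate integral and is sound under either reasonable interpretation of the spherical average.

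There is, however, a genuine internal contradiction in your treatment of the spin term, and you cannot let both of your evaluations stand. Your explicit computation uses the area-uniform second moment $\left\langle \mathbf{e}\mathbf{e}^{T}\right\rangle =\frac{1}{3}\mathbf{I}$ and yields $\left\langle \mathbf{e}\times\mathbf{W}\mathbf{e}\right\rangle =\frac{2}{3}\mathbf{w}$ with $\mathbf{w}=\frac{1}{2}\boldsymbol{\omega}$; combined with the prefactor $2$ in the definition \eqref{eq:nugeneral}, this gives $\boldsymbol{\nu}=\frac{4}{3}\mathbf{w}=\frac{2}{3}\boldsymbol{\omega}$, not $\frac{1}{2}\boldsymbol{\omega}$. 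The rigid-body calibration you invoke immediately afterwards asserts $\left\langle \boldsymbol{\nu}_{\min}\right\rangle =\frac{1}{2}\boldsymbol{\nu}_{rigid}$, i.e.\ $\left\langle \mathbf{e}\times\mathbf{W}\mathbf{e}\right\rangle =\frac{1}{2}\mathbf{w}$ --- a different value for the same integral. The clash comes from the averaging measure: the average actually carried out in \eqref{eq:nurigid-1} is uniform in the spherical parameters $(\phi,\psi)$, with no area Jacobian, for which $\frac{1}{\pi}\int_{0}^{\pi}\sin^{2}\psi\,d\psi=\frac{1}{2}$ but $\left\langle \mathbf{e}\mathbf{e}^{T}\right\rangle \neq\frac{1}{3}\mathbf{I}$; for the genuinely area-uniform average your $\frac{2}{3}\mathbf{w}$ is the correct value of the integral, but then the rigid-body average is $\frac{2}{3}\boldsymbol{\nu}_{rigid}$ and the normalization $2$ in \eqref{eq:nugeneral} no longer returns the angular velocity of a rigid body (the calibrating factor would have to be $\frac{3}{2}$). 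You must commit to a single measure throughout: either adopt the parameter-uniform average and delete the $\left\langle \mathbf{e}\mathbf{e}^{T}\right\rangle =\frac{1}{3}\mathbf{I}$ computation, replacing it by the Appendix E reduction as the paper does, or work area-uniformly and adjust the prefactor accordingly. As written, your argument derives two different values for $\boldsymbol{\nu}(\mathbf{x},t)$.
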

\begin{proof}
See Appendix F.
\end{proof}
Proposition \ref{prop:Fiber averaged 3D} extends Cauchy's mean material
rotation rate result to three dimensions. It supports the expectation
that a self-consistent description of mean material rotation should
yield an instantaneous angular velocity equal to half of the vorticity
for any finite deformation, just as the dynamic rotation tensor $\mathbf{O}_{\tau}^{t}$
does.

\subsection{Dynamically consistent mean rotation angles\label{sub:Dynamically-consistent-material}}

Cauchy \cite{cauchy41} measures the magnitude of finite continuum
rotation locally by computing the rotation angle of initially co-planar
line elements about the normal of their initial plane. This \emph{mean
rotation angle} obeys a complicated, coordinate-dependent formula
(Truesdell \cite{truesdell54}) that remained unevaluated and largely
unused for a long time.

Remarkably, Zheng \& Hwang \cite{zheng92} and Huang et al. \cite{Huang96}
succeeded in evaluating the integral in Cauchy's mean rotation angle
for general planes, obtaining involved expressions defined on different
angular domains. As an alternative measure of mean rotation, Novozhilov
\cite{novozhilov71} proposed to evaluate the spatial mean of the
tangent of Cauchy's mean rotation angle, as opposed to the mean of
the angle itself, over all initially co-planar material vectors. Invariant
formulations of this idea appeared later in Truesdell \& Toupin \cite{truesdell60}
and de Oliviera et al. \cite{oliveira05}. While simpler to evaluate,
Novozhilov's version of the mean rotation angle suffers from singularities
due to the use of the tangent function (de Oliviera et al. \cite{oliveira05}).
Finally, Marzano \cite{Marzano87} proposed the mean of the cosine
of Cauchy's angle as a measure of mean rotation.

For all these mean rotation measures, the total rotation is not well-defined
beyond a range of angles due to the inherent limitations of the inverse
trigonometric functions used in their construction. A more important
issue is, however, that even fully invariant formulations of the mean
rotation angle concept (e.g., Martins \& Podiu--Guiduigli \cite{martinz92}
and Zheng, Hwang \& Betten \cite{zheng94}) extract the rotational
component of a deformation gradient via polar decomposition between
fixed initial and finite times. As a consequence, these mean rotations
are \emph{not material}: they inherit the dynamic inconsistency \eqref{eq:no_product}
of the rotation tensor. 

When evaluated along a material trajectory $\mathbf{x}(t)$, with
$\mathbf{x}(\tau)=\mathbf{x}_{\tau},$ any smooth unit vector field\emph{
$\mathbf{g}(\mathbf{\mathbf{x}},t)$} defines a time-varying axis
$\mathbf{g}(\mathbf{\mathbf{x}}(t),t)$. For any smooth rotation family
$\mathbf{Q}(s)$ defined along $\mathbf{x}(s)$ for $s\in[\tau,t]$,
the total rotation angle $\alpha_{\tau}^{t}$ with respect to the
evolving axis $\mathbf{g}(\mathbf{\mathbf{x}}(s),s)$ is equal to
\[
\alpha_{\tau}^{t}(\mathbf{x}_{\tau};\mathbf{g})=\int_{\tau}^{t}\dot{\mathbf{\boldsymbol{q}}}(s)\cdot\mathbf{g}(\mathbf{x}(s),s)\,ds,
\]
 where the angular velocity vector $\dot{\mathbf{\boldsymbol{q}}}(s)$
of $\mathbf{Q}(s)$ is defined by the relationship 
\[
\mathbf{\dot{Q}}(s)\mathbf{Q}^{T}(s)\mathbf{e}=\dot{\mathbf{\boldsymbol{q}}}(s)\times\mathbf{e},\qquad\forall\mathbf{e}\in\mathbb{R}^{3}.
\]

In line with our definition of dynamical consistency for rotation
tensors, we say that the rotation angle $\alpha_{\tau}^{t}$ with
respect to the axis field $\mathbf{g}(\mathbf{\mathbf{x}},t)$ is
\emph{dynamically consisten}t if it is additive along trajectories.
Specifically, for all times $\tau,\sigma,t\in[t_{0},t_{1}]$, the
angle $\alpha_{\tau}^{t}$ should satisfy 
\begin{equation}
\alpha_{\tau}^{t}(\mathbf{x}_{\tau};\mathbf{g})=\alpha_{\sigma}^{t}(\mathbf{x}_{\sigma};\mathbf{g})+\alpha_{\tau}^{\sigma}(\mathbf{x}_{\tau};\mathbf{g})\label{eq:dyn_const_angle}
\end{equation}
for dynamical consistency. Note that the choice $\mathbf{Q}(t)=\mathbf{R}_{\tau}^{t}$
does not give a dynamically consistent angle by formula \eqref{eq:no_product}
(cf. Remark \ref{remark: angle argument for polar rotation} in Appendix
G). The dynamic polar decomposition, however, provides several dynamically
consistent rotation angles, some of which are even objective. We keep
the terminology used for Cauchy's angle, referring to these dynamically
consistent rotation angles as mean rotation angles. This is because
they represent single-valued, overall fits to a continuum of fiber
rotation angles in a deforming volume element.
\begin{thm}
\label{theo:dynamic angles}{[}Dynamically consistent mean rotation
angles{]} \end{thm}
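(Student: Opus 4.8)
The plan is to reduce each proposed mean rotation angle to an integral whose integrand depends only on the current time $s$, after which dynamical consistency is an immediate property of the Riemann integral, and then to treat objectivity separately by tracking how that integrand transforms under an observer change. The two candidate rotation families that can possibly work are $\mathbf{O}_\tau^t$ and $\mathbf{\boldsymbol{\Phi}}_\tau^t$, since these are the only rotational \emph{processes} produced by the DPD; the mean rotation tensors $\mathbf{\boldsymbol{\Theta}}_\tau^t,\mathbf{\boldsymbol{\Sigma}}_\tau^t$ carry explicit $\tau$-dependence in their rates (cf. \eqref{eq:THETAODE}--\eqref{eq:SIGMAODE}) and hence cannot yield additive angles.

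First I would identify the angular velocity vector $\dot{\mathbf{\boldsymbol{q}}}(s)$ for each choice. For $\mathbf{Q}(s)=\mathbf{O}_\tau^s$, equation \eqref{eq:OODE} gives $\dot{\mathbf{O}}_\tau^s\left(\mathbf{O}_\tau^s\right)^T=\mathbf{W}(\mathbf{x}(s),s)$, so the defining relation $\dot{\mathbf{Q}}\mathbf{Q}^T\mathbf{e}=\dot{\mathbf{\boldsymbol{q}}}\times\mathbf{e}$ identifies $\dot{\mathbf{\boldsymbol{q}}}(s)$ with the axial vector of $\mathbf{W}$, namely $-\tfrac12\boldsymbol{\omega}(\mathbf{x}(s),s)$ in the sign convention of \eqref{eq:vort and rot}. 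The same computation applied to $\mathbf{Q}(s)=\mathbf{\boldsymbol{\Phi}}_\tau^s$, using \eqref{eq:PHIODE}, returns the axial vector of $\mathbf{W}-\bar{\mathbf{W}}$, i.e. $-\tfrac12[\boldsymbol{\omega}(\mathbf{x}(s),s)-\bar{\boldsymbol{\omega}}(s)]$. Substituting these into $\alpha_\tau^t(\mathbf{x}_\tau;\mathbf{g})=\int_\tau^t\dot{\mathbf{\boldsymbol{q}}}(s)\cdot\mathbf{g}(\mathbf{x}(s),s)\,ds$ produces the explicit vorticity-weighted line integrals claimed in the statement.

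The decisive point for dynamical consistency is that both $\mathbf{W}$ and $\mathbf{W}-\bar{\mathbf{W}}$ are independent of the initial observation time $\tau$ — exactly the property that fails for the polar rotation rate in \eqref{eq:matrotrate}. Consequently the integrand $\dot{\mathbf{\boldsymbol{q}}}(s)\cdot\mathbf{g}(\mathbf{x}(s),s)$ is a function of $s$ alone, evaluated along the single trajectory $\mathbf{x}(\cdot)$, and the additivity \eqref{eq:dyn_const_angle} follows at once from $\int_\tau^t=\int_\tau^\sigma+\int_\sigma^t$, with the intermediate point $\mathbf{x}_\sigma=\mathbf{x}(\sigma)$ furnishing the base point for the two sub-interval angles. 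I would state the contrast with $\mathbf{Q}=\mathbf{R}_\tau^t$ explicitly, whose rate carries the extra $\tau$-dependent term in \eqref{eq:matrotrate} and therefore breaks the split.

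For objectivity I would invoke the transformation laws already recorded for the DPD factors. Under the frame change \eqref{eq:objective} the Remark's formula gives $\tilde{\mathbf{\boldsymbol{\Phi}}}_\tau^t=\mathbf{Q}^T(t)\mathbf{\boldsymbol{\Phi}}_\tau^t\mathbf{P}(t)$, with $\mathbf{P}$ solving $\dot{\mathbf{P}}=\mathbf{\boldsymbol{\Phi}}_t^\tau\dot{\mathbf{Q}}\mathbf{Q}^T\mathbf{\boldsymbol{\Phi}}_\tau^t\mathbf{P}$, $\mathbf{P}(\tau)=\mathbf{Q}(\tau)$. In two dimensions all rotations and skew tensors commute, so $\mathbf{\boldsymbol{\Phi}}_t^\tau\dot{\mathbf{Q}}\mathbf{Q}^T\mathbf{\boldsymbol{\Phi}}_\tau^t=\dot{\mathbf{Q}}\mathbf{Q}^T$ and hence $\mathbf{P}(t)=\mathbf{Q}(t)$, reducing the transformation to the conjugation $\tilde{\mathbf{\boldsymbol{\Phi}}}_\tau^t=\mathbf{Q}^T(t)\mathbf{\boldsymbol{\Phi}}_\tau^t\mathbf{Q}(t)$, which leaves a planar rotation angle unchanged; equivalently, the transformed scalar angular velocity equals the original, so the $\mathbf{\boldsymbol{\Phi}}$-based angle is frame-invariant. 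I expect this objectivity step to be the main obstacle: it requires pinning down how the axis field $\mathbf{g}$ transforms and then confirming that the scalar integrand $\dot{\mathbf{\boldsymbol{q}}}\cdot\mathbf{g}$ is genuinely invariant and not merely additive. It is precisely here that the argument is confined to the planar case, since the mean-spin correction $\bar{\mathbf{W}}$ fails to commute with $\mathbf{\boldsymbol{\Phi}}_\tau^t$ in three dimensions, while the $\mathbf{O}$-based angle, built from the non-objective $\mathbf{O}_\tau^t$, is only dynamically consistent and not objective.
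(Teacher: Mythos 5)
Your identification of the angular velocity vectors and your additivity argument coincide with the paper's proof: both rest on the fact that the rates $\dot{\mathbf{O}}_{\tau}^{t}\left(\mathbf{O}_{\tau}^{t}\right)^{T}=\mathbf{W}$ and $\dot{\mathbf{\boldsymbol{\Phi}}}_{\tau}^{t}\left(\mathbf{\boldsymbol{\Phi}}_{\tau}^{t}\right)^{T}=\mathbf{W}-\bar{\mathbf{W}}$ carry no explicit $\tau$-dependence, so the angle is an integral of a function of $s$ alone and \eqref{eq:dyn_const_angle} follows from splitting the integral. That part of statement (i), and the contrast with the polar rotation rate \eqref{eq:matrotrate}, is handled exactly as in Appendix G.

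The gap is in your objectivity argument for statements (ii) and (iii). You propose to deduce frame-invariance of $\phi_{\tau}^{t}$ from the transformation law $\tilde{\mathbf{\boldsymbol{\Phi}}}_{\tau}^{t}=\mathbf{Q}^{T}(t)\mathbf{\boldsymbol{\Phi}}_{\tau}^{t}\mathbf{P}(t)$ and the collapse $\mathbf{P}\equiv\mathbf{Q}$ in the plane, and you explicitly confine the conclusion to two dimensions. But the theorem asserts that the scalar angle $\phi_{\tau}^{t}$ \emph{and} the intrinsic rotation $\psi_{\tau}^{t}$ are objective for general three-dimensional deformations; only the tensor $\mathbf{\boldsymbol{\Phi}}_{\tau}^{t}$ is restricted to planar objectivity. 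The paper's route does not go through the tensor at all: it uses the transformation rule for the vorticity, $\boldsymbol{\omega}=\mathbf{Q}\tilde{\boldsymbol{\omega}}+\dot{\mathbf{q}}$, takes the spatial mean over $\mathcal{B}(t)$ (which is volume-preserving under \eqref{eq:objective}), and subtracts to obtain $\boldsymbol{\omega}-\bar{\boldsymbol{\omega}}=\mathbf{Q}(t)\left[\tilde{\boldsymbol{\omega}}-\bar{\tilde{\boldsymbol{\omega}}}\right]$, i.e.\ formula \eqref{eq: vort diff transf}: the frame-dependent increment $\dot{\mathbf{q}}$ cancels in the relative vorticity in any dimension. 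Combined with the transformation $\tilde{\mathbf{g}}=\mathbf{Q}^{T}\mathbf{g}$ of the axis field (the point you flag as the main obstacle but leave unresolved), the integrand $\left[\boldsymbol{\omega}-\bar{\boldsymbol{\omega}}\right]\cdot\mathbf{g}$ is invariant because $\mathbf{Q}\mathbf{a}\cdot\mathbf{Q}\mathbf{b}=\mathbf{a}\cdot\mathbf{b}$. Statement (iii) then follows by choosing $\mathbf{g}=-\left(\boldsymbol{\omega}-\bar{\boldsymbol{\omega}}\right)/\left|\boldsymbol{\omega}-\bar{\boldsymbol{\omega}}\right|$, which is itself an objective axis field by the same formula; your proposal does not address this case. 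As written, your argument proves a strictly weaker result than the theorem claims.
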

\begin{description}
\item [{\emph{(i)}}] \emph{The rotation angle generated by the dynamic
rotation tensor $\mathbf{O}_{\tau}^{t}$ around the axis family $\mathbf{g}$
is given by the }\textbf{\emph{dynamic rotation}}\emph{ 
\begin{equation}
\varphi_{\tau}^{t}(\mathbf{x}_{\tau};\mathbf{g})=-\frac{1}{2}\int_{\tau}^{t}\mathbf{\boldsymbol{\omega}}(\mathbf{x}(s),s)\cdot\mathbf{g}(\mathbf{x}(s),s)\,ds,\label{eq:LAV}
\end{equation}
which is a dynamically consistent rotation angle.}
\item [{\emph{(ii)}}] \emph{The rotation angle generated by the relative
rotation tensor $\mathbf{\mathbf{\boldsymbol{\Phi}}}_{\tau}^{t}$
around the axis family $\mathbf{g}$ is given by the }\textbf{\emph{relative
dynamic rotation}}\emph{ 
\begin{equation}
\phi_{\tau}^{t}(\mathbf{x}_{\tau};\mathbf{g})=-\frac{1}{2}\int_{\tau}^{t}\left[\mathbf{\boldsymbol{\omega}}(\mathbf{x}(s),s)-\bar{\mathbf{\boldsymbol{\omega}}}(s)\right]\cdot\mathbf{g}(\mathbf{x}(s),s)\,ds,\label{eq:LAVD}
\end{equation}
which is an objective and dynamically consistent rotation angle.}
\item [{\emph{(iii)}}] \emph{The rotation angle generated by the relative
rotation tensor $\mathbf{\mathbf{\boldsymbol{\Phi}}}_{\tau}^{t}$
around its own axis of rotation is given by the }\textbf{\emph{intrinsic
dynamic rotation}}\emph{ 
\begin{equation}
\psi_{\tau}^{t}(\mathbf{x}_{\tau}):=\phi_{\tau}^{t}\left(\mathbf{x}_{\tau};-\frac{\mathbf{\boldsymbol{\omega}}-\bar{\mathbf{\boldsymbol{\omega}}}}{\left|\mathbf{\boldsymbol{\omega}}-\bar{\mathbf{\boldsymbol{\omega}}}\right|}\right)=\frac{1}{2}\int_{\tau}^{t}\left|\mathbf{\boldsymbol{\omega}}(\mathbf{x}(s),s)-\bar{\mathbf{\boldsymbol{\omega}}}(s)\right|\,ds\label{eq:LAVDN}
\end{equation}
which is an objective and dynamically consistent rotation angle.}
\end{description}

\begin{proof}
See Appendix G.
\end{proof}
Figure \ref{fig:angles} illustrates the geometry of the dynamically
consistent mean rotation angles described in Theorem \ref{theo:dynamic angles}.

\begin{figure}[p]
\begin{centering}
\includegraphics[width=0.6\textwidth]{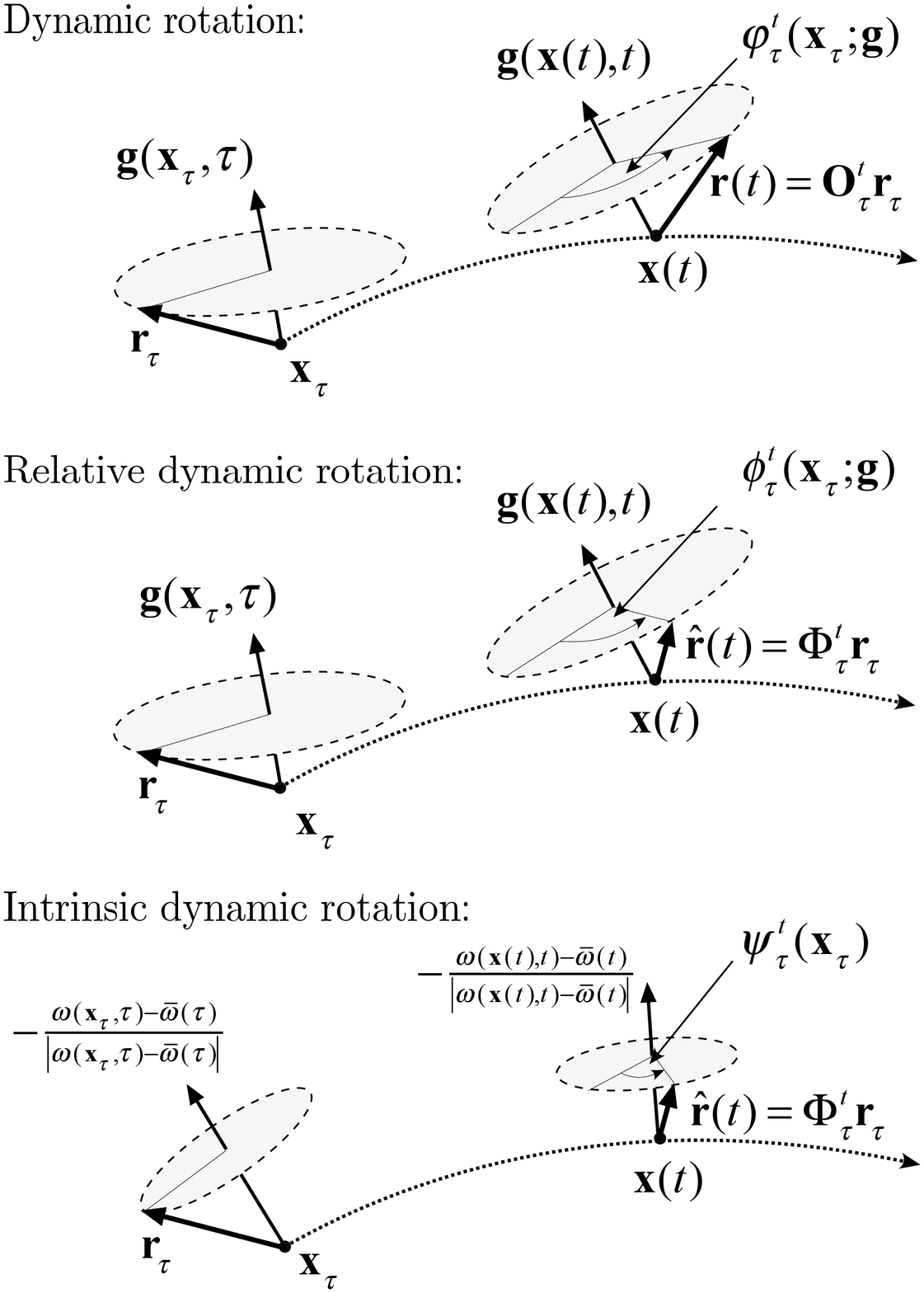}
\par\end{centering}

\caption{The geometry of the dynamic rotation, relative dynamic rotation and
intrinsic dynamic rotation obtained in Theorem \ref{theo:dynamic angles}.
Top: A vector $\mathbf{r}_{\tau}$, based at the initial point $\mathbf{x}_{\tau}$
is rotated by the dynamic rotation tensor $\mathbf{O}_{\tau}^{t}$
into the vector $\mathbf{r}(t)$, spanning the dynamic rotation angle
$\varphi_{\tau}^{t}(\mathbf{x}_{\tau};\mathbf{g})$ around an a priori
defined rotation axis family $\mathbf{g}$. Middle: The same initial
vector $\mathbf{r}_{\tau}$ is now rotated by the relative rotation
tensor $\mathbf{\Phi}_{\tau}^{t}$ into the vector $\hat{\mathbf{r}}(t)$,
spanning the relative dynamic rotation angle $\phi_{\tau}^{t}(\mathbf{x}_{\tau};\mathbf{g})$
around the axis family family $\mathbf{g}$. Bottom: $\mathbf{r}_{\tau}$
is again rotated by the relative rotation tensor $\mathbf{\Phi}_{\tau}^{t}$
into the vector $\hat{\mathbf{r}}(t)$, spanning the intrinsic dynamic
rotation angle $\psi_{\tau}^{t}(\mathbf{x}_{\tau};\mathbf{g})$ around
the the intrinsically defined rotation axis family $-\left(\mathbf{\boldsymbol{\omega}}-\bar{\mathbf{\boldsymbol{\omega}}}\right)/\left|\mathbf{\boldsymbol{\omega}}-\bar{\mathbf{\boldsymbol{\omega}}}\right|$
.}
\label{fig:angles}
\end{figure}

\begin{rem}
The intrinsic dynamic rotation $\psi_{\tau}^{t}$ measures the full
angle swept by the relative rotation tensor along the evolving the
negative relative vorticity vector $-\left(\mathbf{\boldsymbol{\omega}}-\bar{\mathbf{\boldsymbol{\omega}}}\right).$
This scalar measure is objective, even though the relative rotation
tensor $\mathbf{\Phi}_{\tau}^{t}$ generating this angle is only objective
in two dimensions. The intrinsic dynamic rotation rate 
\begin{equation}
\dot{\psi}_{\tau}^{t}(\mathbf{x})=\frac{1}{2}\left|\mathbf{\boldsymbol{\omega}}(\mathbf{x},t)-\bar{\mathbf{\boldsymbol{\omega}}}(t)\right|\label{eq:psidot}
\end{equation}
is also objective both in two- and three dimensions (cf. formula \eqref{eq: vort diff transf}
in the proof of Theorem 4). The intrinsic dynamic rotation rate is,
therefore, a viable candidate for inclusion is rotation-rate-dependent
constitutive laws. In another context, it has already been used to
define and detect rotationally coherent Eulerian vortices objectively
in two-dimensional fluid flows (Haller et al. \cite{Haller15}).
\end{rem}

\begin{rem}
The angle $\psi_{\tau}^{t}$ is always positive: its integrand generates
a positive angular increment, even if the orientation of relative
rotation changes in time due to a zero crossing of the relative vorticity.
For instance, in the two-dimensional experiments of Shapiro \cite{Shapiro61},
$\varphi_{\tau}^{t}(\mathbf{\mathbf{x}}_{\tau};\mathbf{e}_{3})$ gives
precisely the observed net rotation of a small circular body placed
in the fluid. In contrast, $\psi_{\tau}^{t}(\mathbf{\mathbf{x}}_{\tau})$
would report the total angle swept by the circular body relative to
the total mean rotation of the fluid. Both measures are objective,
as stated in Theorem \ref{theo:dynamic angles}. 
\end{rem}

\section{Dynamic rotation and stretch in two dimensions\label{sec:DPD-for-two-dimensional}}

For the material deformation induced by a two-dimensional velocity
field $\mathbf{\mathbf{v}}=(v_{1},v_{2})^{T}$, the spin tensor is
of the form
\[
\mathbf{W}\left(\mathbf{x},t\right)=\left(\begin{array}{cc}
0 & -\frac{1}{2}\omega_{3}(\mathbf{x},t)\\
\frac{1}{2}\omega_{3}(\mathbf{x},t) & 0
\end{array}\right),
\]
where $\omega_{3}=\partial_{x_{1}}v_{2}-\partial_{x_{2}}v_{1}$ is
the plane-normal component of the vorticity. The initial value problem
\eqref{eq:OODE} can then be solved by direct integration to yield
\begin{equation}
\mathbf{O}_{\tau}^{t}=\left(\begin{array}{cr}
\cos\left[\frac{1}{2}\int_{\tau}^{t}\omega_{3}(\mathbf{x}(s),s)ds\right] & -\sin\left[\frac{1}{2}\int_{\tau}^{t}\omega_{3}(\mathbf{x}(s),s)ds\right]\\
\sin\left[\frac{1}{2}\int_{\tau}^{t}\omega_{3}(\mathbf{x}(s),\tau)ds\right] & \cos\left[\frac{1}{2}\int_{\tau}^{t}\omega_{3}(\mathbf{x}(s),s)ds\right]
\end{array}\right),\label{eq:2DOgeneral}
\end{equation}
 whereas the remaining two equations \eqref{eq:MODE}-\eqref{eq:NODE}
take the form
\begin{eqnarray}
\dot{\mathbf{M}}_{\tau}^{t} & = & \left[\mathbf{O}_{t}^{\tau}\mathbf{D}\left(\mathbf{x}(t),t\right)\mathbf{O}_{\tau}^{t}\right]\mathbf{M}_{\tau}^{t},\qquad\mathbf{\qquad\qquad\,\,M}_{\tau}^{\tau}=\mathbf{I},\nonumber \\
\frac{d}{d\tau}\left(\mathbf{N}_{\tau}^{t}\right)^{T} & = & -\left[\mathbf{O}_{\tau}^{t}\mathbf{D}\left(\mathbf{x}(\tau),\tau\right)\mathbf{O}_{t}^{\tau}\right]\left(\mathbf{N}_{\tau}^{t}\right)^{T},\qquad\,\,\left(\mathbf{N}_{t}^{t}\right)^{T}=\mathbf{I},\label{eq:MNodes}
\end{eqnarray}
generally solvable only numerically. 

If the deformation gradient $\mathbf{\mathbf{F}}_{\tau}^{t}$, however,
is explicitly known, then using the self-consistency property \eqref{eq:consistencyofO},
we obtain the solutions of \eqref{eq:MNodes} directly as 
\begin{eqnarray}
\mathbf{M}_{\tau}^{t} & = & \mathbf{O}_{t}^{\tau}\mathbf{\mathbf{F}}_{\tau}^{t}\nonumber \\
 & = & \left(\begin{array}{cr}
\cos\left[\frac{1}{2}\int_{t}^{\tau}\omega_{3}(\mathbf{x}(s),s)ds\right] & -\sin\left[\frac{1}{2}\int_{t}^{\tau}\omega_{3}(\mathbf{x}(s),s)ds\right]\\
\sin\left[\frac{1}{2}\int_{t}^{\tau}\omega_{3}(\mathbf{x}(s),s)ds\right] & \cos\left[\frac{1}{2}\int_{t}^{\tau}\omega_{3}(\mathbf{x}(s),s)ds\right]
\end{array}\right)\mathbf{\mathbf{F}}_{\tau}^{t},\nonumber \\
\label{eq:2DpolarMNgeneral}\\
\mathbf{N}_{\tau}^{t} & = & \mathbf{\mathbf{F}}_{\tau}^{t}\mathbf{O}_{t}^{\tau}\nonumber \\
 & = & \mathbf{\mathbf{F}}_{\tau}^{t}\left(\begin{array}{cr}
\cos\left[\frac{1}{2}\int_{t}^{\tau}\omega_{3}(\mathbf{x}(s),s)ds\right] & -\sin\left[\frac{1}{2}\int_{t}^{\tau}\omega_{3}(\mathbf{x}(s),s)ds\right]\\
\sin\left[\frac{1}{2}\int_{t}^{\tau}\omega_{3}(\mathbf{x}(s),s)ds\right] & \cos\left[\frac{1}{2}\int_{t}^{\tau}\omega_{3}(\mathbf{x}(s),s)ds\right]
\end{array}\right).\nonumber 
\end{eqnarray}

In the present two-dimensional context, we select the rotation axis
\textbf{g} to be the unit normal $\mathbf{e}_{3}$ to the $(x_{1},x_{2})$
plane. With this choice, the unique dynamically consistent, finite
rigid-body rotation of the deformation field can be computed from
\eqref{eq:LAV} as
\begin{equation}
\varphi_{\tau}^{t}(\mathbf{x}_{\tau};\mathbf{e}_{3})=-\frac{1}{2}\int_{\tau}^{t}\omega_{3}(\mathbf{x}(s),s)\,ds.\label{eq:dynangle2D}
\end{equation}
The two-dimensional, objective expression for the relative dynamic
rotation defined in \eqref{eq:LAVD} is 
\begin{equation}
\phi_{\tau}^{t}(\mathbf{x}_{\tau};\mathbf{e}_{3})=-\frac{1}{2}\int_{\tau}^{t}\left[\omega_{3}(\mathbf{x}(s),s)-\bar{\omega}_{3}(s)\right]\,ds,\label{eq:dynangle2Ddev}
\end{equation}
while the intrinsic dynamic rotation in 
\[
\psi_{\tau}^{t}(\mathbf{x}_{\tau})=\frac{1}{2}\int_{\tau}^{t}\left|\omega_{3}(\mathbf{x}(s),s)-\bar{\omega}_{3}(s)\right|\,ds,
\]

Below we evaluate the two-dimensional DPD formulas \eqref{eq:2DOgeneral}-\eqref{eq:2DpolarMNgeneral}
and the dynamic rotation angle on the two examples of Bertrand \cite{betrand73},
which he thought proved the inability of vorticity to characterize
material rotation rates correctly (cf. Truesdell and Rajagopal \cite{Truesdell09}).
\begin{example}
\emph{Simple planar shear.} Consider the incompressible velocity field
$\mathbf{\mathbf{v}}(\mathbf{\mathbf{x}})=\left(a(x_{2}),0\right)^{T}$
for some continuously differentiable scalar function $a(x_{2})$.
The corresponding planar shear deformation gradient is 
\begin{equation}
\mathbf{\mathbf{F}}_{0}^{t}(\mathbf{x}_{0})=\left(\begin{array}{cc}
1 & a^{\prime}(x_{20})t\\
0 & 1
\end{array}\right).\label{eq:Fplanar shear}
\end{equation}
The classic polar decomposition is generally prohibitive to calculate
in the presence of parameters, even for the simple deformation gradient
\eqref{eq:Fplanar shear}. From the calculations of Dienes \cite{dienes79},
however, we obtain
\begin{eqnarray}
\mathbf{R}_{0}^{t} & = & \left(\begin{array}{cr}
\cos\left(-\tan^{-1}\left[\frac{1}{2}a^{\prime}(x_{20})t\right]\right) & -\sin\left(-\tan^{-1}\left[\frac{1}{2}a^{\prime}(x_{20})t\right]\right)\\
\sin\left(-\tan^{-1}\left[\frac{1}{2}a^{\prime}(x_{20})t\right]\right) & \cos\left(-\tan^{-1}\left[\frac{1}{2}a^{\prime}(x_{20})t\right]\right)
\end{array}\right),\label{eq:simple_shear_polar_rot}\\
\mathbf{U}_{0}^{t} & = & \left(\begin{array}{cc}
\cos\left(\tan^{-1}\left[\frac{1}{2}a^{\prime}(x_{20})t\right]\right) & \sin\left(\tan^{-1}\left[\frac{1}{2}a^{\prime}(x_{20})t\right]\right)\\
\sin\left(\tan^{-1}\left[\frac{1}{2}a^{\prime}(x_{20})t\right]\right) & a^{\prime}(x_{20})t\sin\left(\tan^{-1}\left[\frac{1}{2}a^{\prime}(x_{20})t\right]\right)+\cos\left(\tan^{-1}\left[\frac{1}{2}a^{\prime}(x_{20})t\right]\right)
\end{array}\right),\nonumber \\
\mathbf{V}_{0}^{t} & = & \left(\begin{array}{cr}
\frac{1+\sin^{2}\left(\tan^{-1}\left[\frac{1}{2}a^{\prime}(x_{20})t\right]\right)}{\cos\left(\tan^{-1}\left[\frac{1}{2}a^{\prime}(x_{20})t\right]\right)} & \sin\left(\tan^{-1}\left[\frac{1}{2}a^{\prime}(x_{20})t\right]\right)\\
\sin\left(\tan^{-1}\left[\frac{1}{2}a^{\prime}(x_{20})t\right]\right) & \cos\left(\tan^{-1}\left[\frac{1}{2}a^{\prime}(x_{20})t\right]\right)
\end{array}\right),\nonumber 
\end{eqnarray}
 showing that the polar rotation angle $\beta(t,\tau)$ satisfies
\begin{equation}
\tan\beta(t,\tau)=-\frac{1}{2}a^{\prime}(x_{20})\left(t-\tau\right).\label{eq:betadef}
\end{equation}
The dynamic inconsistency \eqref{eq:no_product} of polar rotations
is already transparent in this simple example. Indeed, noting that
\begin{equation}
\beta(t,s)=\tan^{-1}\left[\frac{1}{2}a^{\prime}(x_{20})\left(t-s\right)\right]+\tan^{-1}\left[\frac{1}{2}a^{\prime}(x_{20})s\right]\neq\tan^{-1}\left[\frac{1}{2}a^{\prime}(x_{20})t\right],\qquad t\neq0,\label{eq:betaform}
\end{equation}
 we obtain from \eqref{eq:simple_shear_polar_rot} and \eqref{eq:betaform}that
\begin{equation}
\mathbf{R}_{s}^{t}\mathbf{R}_{0}^{s}=\left(\begin{array}{cc}
\cos\left(\beta(t,s)\right) & \sin\left(\beta(t,s)\right)\\
-\sin\left(\beta(t,s)\right) & \cos\left(\beta(t,s)\right)
\end{array}\right)\neq\mathbf{R}_{0}^{t},\qquad\forall t\neq0.\label{eq:simple_shear_non_commute}
\end{equation}

To compute the dynamic polar decomposition from Theorem \ref{theo:FDPD},
we first note that 
\[
\omega_{3}(\mathbf{x}(t))=-\partial_{x_{2}}v_{1}(x_{2}(t))\equiv-a^{\prime}(x_{20}),
\]
 and hence the entries of the rate-of-strain tensor $\mathbf{D}(\mathbf{\mathbf{x}}(t))$
satisfy 
\[
D_{11}=D_{22}=0,\qquad D_{12}(\mathbf{x}(t))=D_{21}(\mathbf{x}(t))\equiv\frac{1}{2}a^{\prime}(x_{20}).
\]
 Therefore, formulas \eqref{eq:2DOgeneral}-\eqref{eq:2DpolarMNgeneral}
give the dynamic polar decomposition factors
\begin{eqnarray*}
\mathbf{O}_{0}^{t} & = & \left(\begin{array}{cr}
\cos\left[-\frac{1}{2}a^{\prime}(x_{20})t\right] & -\sin\left[-\frac{1}{2}a^{\prime}(x_{20})t\right]\\
\sin\left[-\frac{1}{2}a^{\prime}(x_{20})t\right] & \cos\left[-\frac{1}{2}a^{\prime}(x_{20})t\right]
\end{array}\right),\\
\\
\mathbf{M}_{0}^{t} & = & \left(\begin{array}{cc}
\cos\left[\frac{1}{2}a^{\prime}(x_{20})t\right] & a^{\prime}(x_{20})t\cos\left[\frac{1}{2}a^{\prime}(x_{20})t\right]-\sin\left[\frac{1}{2}a^{\prime}(x_{20})t\right]\\
\sin\left[\frac{1}{2}a^{\prime}(x_{20})t\right] & a^{\prime}(x_{20})t\sin\left[\frac{1}{2}a^{\prime}(x_{20})t\right]+\cos\left[\frac{1}{2}a^{\prime}(x_{20})t\right]
\end{array}\right),\\
\\
\mathbf{N}_{0}^{t} & = & \left(\begin{array}{cc}
a^{\prime}(x_{20})t\sin\left[\frac{1}{2}a^{\prime}(x_{20})t\right]+\cos\left[\frac{1}{2}a^{\prime}(x_{20})t\right] & a^{\prime}(x_{2})t\cos\left[\frac{1}{2}a^{\prime}(x_{20})t\right]-\sin\left[\frac{1}{2}a^{\prime}(x_{20})t\right]\\
\sin\left[\frac{1}{2}a^{\prime}(x_{20})t\right] & \cos\left[\frac{1}{2}a^{\prime}(x_{20})t\right]
\end{array}\right).\\
\end{eqnarray*}
Finally, by formula \eqref{eq:dynangle2D}, the dynamic rotation is
simply 
\[
\varphi_{0}^{t}(\mathbf{x}_{0};\mathbf{e}_{3})=-\frac{1}{2}a^{\prime}(x_{20})t,
\]
which we plot in Fig. \ref{fig:angleplots1} for comparison with the
rotation angle generated by the rotation tensor $\mathbf{\mathbf{R}}_{0}^{t}$
as a function of time. We also show in the figure the consequence
of the lack of additivity for the polar rotation, as verified in \eqref{eq:simple_shear_non_commute}.
Indeed, computing the polar rotation angle as a superposition of finite
sub-rotations, even from its analytic formula and hence without numerical
error, will give differing results.
\begin{figure}[H]
\begin{centering}
\includegraphics[width=0.8\textwidth]{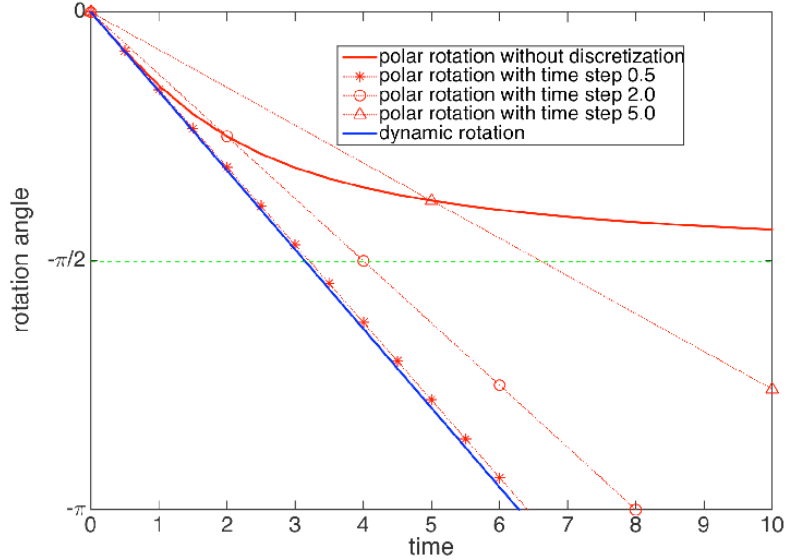}
\par\end{centering}

\caption{The classic polar rotation angle (red) and the dynamic rotation angle
(blue) as a function of time for the deformation gradient \eqref{eq:Fplanar shear}
describing planar, linear shear with $a(x_{20})=x_{20}$. Also shown
are the polar rotation angles computed from three different levels
of discretization in time. At each time step, the polar rotation angle
is incrementally recomputed, with the current time taken as the initial
time in eq. \eqref{eq:betadef}. The new rotational increment is then
added to the rotation accumulated so far. With decreasing discretization
step, the polar rotation angle computed in this incremental fashion
necessarily converges to the dynamic rotation angle by formula \eqref{eq:matrotrate}.
(Both the polar and the dynamic rotation angles represent an overall
assessment of the local rotation; individual material fibers all rotate
by different angles.)}
\label{fig:angleplots1}
\end{figure}
 By formula \eqref{eq:dynangle2Ddev}, the relative dynamic rotation
is 
\[
\phi_{0}^{t}(\mathbf{x}_{0};\mathbf{e}_{3})=-\frac{1}{2}\left[a^{\prime}(x_{20})-\overline{a^{\prime}(x_{20})}\right]t,
\]
with the overbar denoting spatial average over the domain of interest.
Finally, the intrinsic dynamic rotation is
\[
\psi_{0}^{t}(\mathbf{x}_{0})=\frac{1}{2}\left|a^{\prime}(x_{20})-\overline{a^{\prime}(x_{20})}\right|t
\]

We conclude from Fig. \ref{fig:angleplots1} that generic material
elements rotate at the well-defined mean rate $\dot{\varphi}_{0}^{t}(\mathbf{\mathbf{x}}_{0};\mathbf{e}_{3})=-\frac{1}{2}a^{\prime}(x_{20})$.
This is at odds with the polar mean rotation rate which tends to zero
over time. 

At first sight, it is the decaying polar rotation rate that agrees
with one's physical intuition. Indeed, as Flanagan and Taylor \cite{flanagan87}
write about this example: ``Clearly the body experiences rotations
which diminish over time,...''. By the end of any given finite deformation
interval, the rotation of infinitely many material fibers indeed slows
down. At the same time, however, the rotation of infinitely many other
material fibers is accelerating. For instance, at any given time $t$,
material fibers in vertical position are just reaching their maximal
material rotation rate $-\frac{3}{2}a^{\prime}(x_{20})$ (cf. formula
\eqref{eq:edotform}). Overall material fiber rotation, therefore,
does not die out.

We show a more detailed sketch of the behavior of material fibers
in Fig. \ref{fig:shear_deformation}. The frame is fixed to the trajectory
in the middle, which then becomes a set of fixed points. At any given
time, different material fibers rotate at different speeds; the lengths
of the arcs illustrate the magnitudes of angular velocities for the
corresponding material fibers. Only horizontal material fibers have
zero angular velocity. The average material angular velocity is equal
to $-\frac{1}{2}a^{\prime}(x_{20})t$ by Cauchy's classic result (Cauchy
\cite{cauchy41}) or by the restriction of our Proposition \ref{prop:Fiber averaged 3D}
to two dimensions. An infinitesimal, rigid circular tracer (shaded
area) placed in the deformation field rotates precisely at this angular
velocity. Most of this was already pointed out by Helmholtz \cite{Helmholtz68}
in his response to Bertrand \cite{betrand73}, but his observations
have apparently not been interpreted in the context of polar rotations.

\begin{figure}[H]
\begin{centering}
\includegraphics[width=0.6\textwidth]{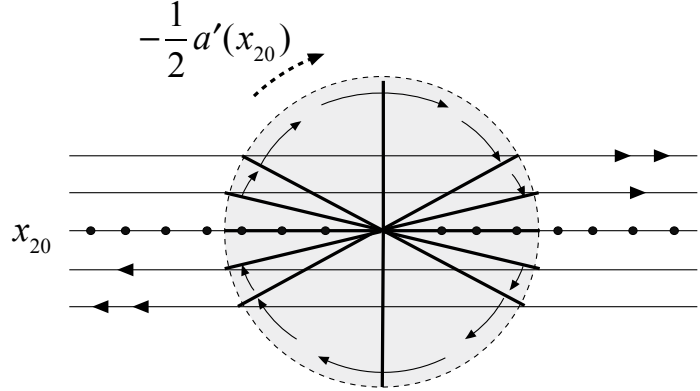}
\par\end{centering}

\caption{Rotation of material lines in a parallel shear field. }
\label{fig:shear_deformation}
\end{figure}

\end{example}

\begin{example}
\emph{Irrotational vortex}. Consider the two-dimensional, circularly
symmetric, incompressible velocity field $\mathbf{\mathbf{v}}(\mathbf{\mathbf{x}})=\left(\frac{\,-x_{2}\alpha}{x_{1}^{2}+x_{2}^{2}},\frac{\alpha x_{1}}{x_{1}^{2}+x_{2}^{2}}\right)^{T}$
, where $\alpha\in\mathbb{R}$ is a parameter. By direct calculation,
we obtain the vorticity and displacement fields 
\[
\omega_{3}\equiv0,\qquad\mathbf{X}_{0}^{t}(x_{0})=\left(\begin{array}{cc}
\cos\frac{\alpha t}{\left|\mathbf{x}_{0}\right|^{2}} & -\sin\frac{\alpha t}{\left|\mathbf{x}_{0}\right|^{2}}\\
\sin\frac{\alpha t}{\left|\mathbf{x}_{0}\right|^{2}} & \cos\frac{\alpha t}{\left|\mathbf{x}_{0}\right|^{2}}
\end{array}\right)\mathbf{x}_{0},
\]
as well as the deformation gradient
\begin{equation}
\mathbf{\mathbf{F}}_{0}^{t}(x_{0})=\left(\begin{array}{cr}
\cos\frac{\alpha t}{\left|\mathbf{x}_{0}\right|^{2}}+\frac{2x_{10}\alpha t\left(x_{10}\sin\frac{\alpha t}{\left|\mathbf{x}_{0}\right|^{2}}+x_{20}\cos\frac{\alpha t}{\left|\mathbf{x}_{0}\right|^{2}}\right)}{\left|\mathbf{x}_{0}\right|^{4}} & -\sin\frac{\alpha t}{\left|\mathbf{x}_{0}\right|^{2}}+\frac{2x_{20}\alpha t\left(x_{10}\sin\frac{\alpha t}{\left|\mathbf{x}_{0}\right|^{2}}+x_{20}\cos\frac{\alpha t}{\left|\mathbf{x}_{0}\right|^{2}}\right)}{\left|\mathbf{x}_{0}\right|^{4}}\\
\sin\frac{\alpha t}{\left|\mathbf{x}_{0}\right|^{2}}-\frac{2x_{10}\alpha t\left(x_{10}\cos\frac{\alpha t}{\left|\mathbf{x}_{0}\right|^{2}}-x_{20}\sin\frac{\alpha t}{\left|\mathbf{x}_{0}\right|^{2}}\right)}{\left|\mathbf{x}_{0}\right|^{4}} & \cos\frac{\alpha t}{\left|\mathbf{x}_{0}\right|^{2}}-\frac{2x_{20}\alpha t\left(x_{10}\cos\frac{\alpha t}{\left|\mathbf{x}_{0}\right|^{2}}-x_{20}\sin\frac{\alpha t}{\left|\mathbf{x}_{0}\right|^{2}}\right)}{\left|\mathbf{x}_{0}\right|^{4}}
\end{array}\right).\label{eq:defgrad_irrot_vortex}
\end{equation}
We also obtain from formulas \eqref{eq:2DOgeneral}-\eqref{eq:2DpolarMNgeneral}
the dynamic polar decomposition factors
\[
\mathbf{O}_{0}^{t}=\left(\begin{array}{cc}
1 & 0\\
0 & 1
\end{array}\right),\qquad\mathbf{M}_{0}^{t}=\mathbf{N}_{0}^{t}=\mathbf{\mathbf{F}}_{0}^{t}(\mathbf{x}_{0}).
\]
By formulas \eqref{eq:dynangle2D}-\eqref{eq:dynangle2Ddev}, the
dynamic rotation, the relative dynamic rotation, and the intrinsic
dynamic rotation all vanish: 
\[
\varphi_{0}^{t}(\mathbf{x}_{0};\mathbf{e}_{3})=\phi_{0}^{t}(\mathbf{x}_{0};\mathbf{e}_{3})=\psi_{0}^{t}(\mathbf{x}_{0})\equiv0.
\]
We show this together with the numerically computed polar rotation
angle in Fig. \ref{fig:angleplots2}.
\begin{figure}[H]
\begin{centering}
\includegraphics[width=0.7\textwidth]{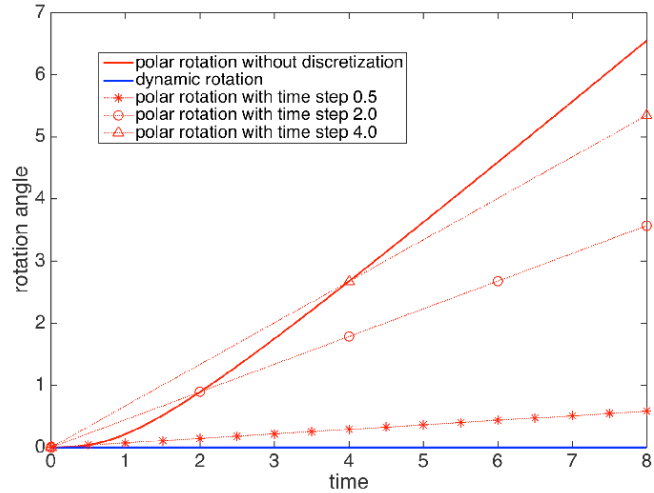}
\par\end{centering}

\caption{The classic polar rotation angle (red) and the dynamic rotation angle
(blue) as a function of time for the deformation gradient \eqref{eq:defgrad_irrot_vortex}
of an irrotational vortex with $\alpha=1$. Also shown are the exact
polar rotation angles computed incrementally for three different levels
of discretization in time, as in Fig. \ref{fig:angleplots1}. }
\label{fig:angleplots2}
\end{figure}

The vanishing dynamic rotation angle is consistent with the lack of
rotation exhibited by circular tracers in irrotational vortex experiments
(Shapiro \cite{Shapiro61}). Figure \ref{fig:irrotational_vortex}
illustrates the translation of such a tracer (shaded area). While
exceptional material fibers tangent to trajectories rotate with the
angular velocity of the trajectory, other fibers rotate in the opposite
direction due to shear. The average material angular velocity is equal
to zero by Cauchy's classic result, as well as by the restriction
of our Proposition \ref{prop:Fiber averaged 3D} to two dimensions.
Again, these observations were already made by Helmholtz \cite{Helmholtz68}
to Bertrand \cite{betrand73}, but have apparently not been evaluated
relative to the rotation predicted by the polar decomposition (see,
e.g., Dienes \cite{dienes86}, who mentions this example). 
\begin{figure}[H]
\begin{centering}
\includegraphics[width=0.5\textwidth]{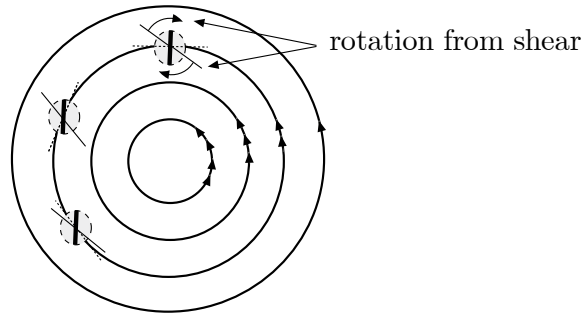}
\par\end{centering}

\caption{Rotation of material line elements around an irrotational vortex. }
\label{fig:irrotational_vortex}
\end{figure}

\end{example}

\section{Dynamic rotation and stretch in three dimensions}

For material deformation fields induced by three-dimensional velocity
fields $\mathbf{\mathbf{v}}=(v_{1},v_{2},v_{3})^{T}$, the spin tensor
can be written as
\[
\mathbf{W}\left(\mathbf{x},t\right)=\left(\begin{array}{ccc}
0 & -\frac{1}{2}\omega_{3}(\mathbf{x},t) & \frac{1}{2}\omega_{2}(\mathbf{x},t)\\
\frac{1}{2}\omega_{3}(\mathbf{x},t) & 0 & -\frac{1}{2}\omega_{1}(\mathbf{x},t)\\
-\frac{1}{2}\omega_{2}(\mathbf{x},t) & \frac{1}{2}\omega_{1}(\mathbf{x},t) & 0
\end{array}\right),
\]
where $\mathbf{\boldsymbol{\omega}}=(\omega_{1},\omega_{2},\omega_{3})=\mathbf{\nabla}\times\mathbf{\mathbf{v}}$.
The three-dimensional rotational process $\mathbf{\mathbf{O}}_{\tau}^{t}$
is the normalized fundamental matrix solution of the non-autonomous,
three--dimensional linear system of equations \eqref{eq:OODE}. At
this level of generality, \eqref{eq:OODE} must be solved numerically. 

As in the two-dimensional case, if both the rotational process $\mathbf{\mathbf{O}}_{\tau}^{t}$
and the deformation gradient $\mathbf{\mathbf{F}}_{\tau}^{t}$ are
known, then the remaining factors in the left and right DPD can be
computed as 
\begin{equation}
\mathbf{M}_{\tau}^{t}=\mathbf{O}_{t}^{\tau}\mathbf{\mathbf{F}}_{\tau}^{t},\qquad\mathbf{N}_{\tau}^{t}=\mathbf{\mathbf{F}}_{\tau}^{t}\mathbf{O}_{t}^{\tau}.\label{eq:3DMNgeneral}
\end{equation}

Finally, the dynamic rotation $\varphi_{\tau}^{t}(\mathbf{\mathbf{x}}_{\tau};\mathbf{g})$,
its relative part $\phi_{\tau}^{t}(\mathbf{\mathbf{x}}_{\tau};\mathbf{g})$
and the intrinsic rotation $\psi_{\tau}^{t}(\mathbf{\mathbf{x}}_{\tau};\mathbf{g})$
obey the formulas \eqref{eq:LAV}-\eqref{eq:LAVDN} without simplification.
\begin{example}
\emph{Three-dimensional, unsteady, parallel shear}. For a smooth,
unsteady parallel shear field in three dimensions, the velocity field
is in the general form
\[
\mathbf{v}(\mathbf{x},t)=\left(\begin{array}{c}
v_{1}(x_{3},t)\\
v_{2}(x_{3},t)\\
v_{3}(t)
\end{array}\right),
\]
where the velocity components are smooth functions of their arguments.
The spin tensor and the deformation gradient can be obtained by direct
calculation as
\begin{eqnarray}
\mathbf{W}(\mathbf{x}(t),t) & = & \left(\begin{array}{ccc}
0 & 0 & \frac{1}{2}\partial_{x_{3}}v_{1}(x_{3}(t),t)\\
0 & 0 & \frac{1}{2}\partial_{x_{3}}v_{2}(x_{3}(t),t)\\
-\frac{1}{2}\partial_{x_{3}}v_{1}(x_{3}(t),t) & -\frac{1}{2}\partial_{x_{3}}v_{2}(x_{3}(t),t) & 0
\end{array}\right),\nonumber \\
\mathbf{\mathbf{F}}_{\tau}^{t}(\mathbf{x}_{0}) & = & \left(\begin{array}{ccc}
1 & 0 & \int_{\tau}^{t}\partial_{x_{3}}v_{1}\left(x_{3}(s),s\right)ds\\
0 & 1 & \int_{\tau}^{t}\partial_{x_{3}}v_{2}\left(x_{3}(s),s\right)ds\\
0 & 0 & 1
\end{array}\right).\label{eq:DF3Dexample}
\end{eqnarray}
 The dynamic rotation tensor $\mathbf{\mathbf{O}}_{\tau}^{t}$, therefore,
satisfies the non-autonomous system of differential equations
\begin{equation}
\frac{d}{dt}\mathbf{O}_{\tau}^{t}=\left(\begin{array}{ccc}
0 & 0 & \frac{1}{2}\partial_{x_{3}}v_{1}(x_{3}(t),t)\\
0 & 0 & \frac{1}{2}\partial_{x_{3}}v_{2}(x_{3}(t),t)\\
-\frac{1}{2}\partial_{x_{3}}v_{1}(x_{3}(t),t) & -\frac{1}{2}\partial_{x_{3}}v_{2}(x_{3}(t),t) & 0
\end{array}\right)\mathbf{O}_{\tau}^{t}.\label{eq:3Dexample}
\end{equation}

Without further assumptions, this non-autonomous system can only be
solved numerically, or via an asymptotic Magnus-expansion (Magnus
\cite{magnus54}). For simplicity, we assume from now that $v_{2}(x_{3},t)\equiv cv_{1}(x_{3},t)$
for some constant $c\in\mathbb{R}$. In that case, the coefficient
matrix of \eqref{eq:3Dexample} commutes with its own integral, and
hence the fundamental matrix solution of \eqref{eq:3Dexample} is
just the exponential of the integral of its coefficient matrix (Epstein
\cite{epstein63}). Indeed, we then have
\begin{eqnarray}
\mathbf{O}_{\tau}^{t} & = & e^{\frac{1}{2}\int_{\tau}^{t}\partial_{x_{3}}v_{1}(x_{3}(\tau),\tau)d\tau}\exp\left(\begin{array}{ccc}
0 & 0 & 1\\
0 & 0 & c\\
-1 & -c & 0
\end{array}\right)\label{eq:O3Dexample}\\
 & = & \frac{\exp\left[\frac{1}{2}\int_{\tau}^{t}\partial_{x_{3}}v_{1}\left(x_{30}+\int_{\tau}^{\tau}v_{3}(\sigma)\,d\sigma,s\right)\,ds\right]}{c^{2}+1}\nonumber \\
 &  & \times\left(\begin{smallmatrix}\begin{array}{ccc}
\cos\left(c^{2}+1\right)+c^{2} & c\left[\cos\left(c^{2}+1\right)-1\right] & \sqrt{c^{2}+1}\left(\sin\left(c^{2}+1\right)+c^{2}\right)\\
c\left[\cos\left(c^{2}+1\right)-1\right] & c^{2}\cos\left(c^{2}+1\right)+1 & c\sqrt{c^{2}+1}\sin\left(c^{2}+1\right)\\
-\sqrt{c^{2}+1}\left(\sin\left(c^{2}+1\right)+c^{2}\right) & -c\sqrt{c^{2}+1}\sin\left(c^{2}+1\right) & \cos\left(c^{2}+1\right)
\end{array}\end{smallmatrix}\right).\nonumber 
\end{eqnarray}
Then, from formulas \eqref{eq:3DMNgeneral}, \eqref{eq:DF3Dexample}
and \eqref{eq:O3Dexample} we obtain the left and right DPD factors
$\mathbf{\mathbf{M}}_{\tau}^{t}$ and $\mathbf{\mathbf{N}}_{\tau}^{t}$
explicitly, which we omit here for brevity. With the vorticity vector
\[
\mathbf{\mathbf{\boldsymbol{\omega}}}=\partial_{x_{3}}v_{1}\left(\begin{array}{c}
1\\
-c\\
0
\end{array}\right),
\]
and with respect to a constant rotation axis defined by a unit vector
$\mathbf{g}=(g_{1},g_{2},g_{3})^{T}$, the frame-dependent dynamic
rotation angle is of the form 
\[
\varphi_{\tau}^{t}(\mathbf{x}_{\tau};\mathbf{g})=\frac{1}{2}\left(g_{1}-cg_{2}\right)\int_{\tau}^{t}\partial_{x_{3}}v_{1}\left(x_{30}+\int_{\tau}^{s}v_{3}(\sigma)\,d\sigma,s\right)\,ds.
\]
In contrast, the (objective) relative dynamic rotation is given by
\[
\phi_{\tau}^{t}(\mathbf{x}_{\tau};\mathbf{g})=\frac{1}{2}\left(g_{1}-cg_{2}\right)\int_{\tau}^{t}\left[\partial_{x_{3}}v_{1}\left(x_{30}+\int_{\tau}^{s}v_{3}(\sigma)\,d\sigma,s\right)-\overline{\partial_{x_{3}}v_{1}}(s)\right]\,ds,
\]
 and the (objective) intrinsic dynamic rotation is given by
\[
\psi_{\tau}^{t}(\mathbf{x}_{\tau})=\frac{1}{2}\sqrt{c^{2}+1}\int_{\tau}^{t}\left|\partial_{x_{3}}v_{1}\left(x_{30}+\int_{\tau}^{s}v_{3}(\sigma)\,d\sigma,s\right)-\overline{\partial_{x_{3}}v_{1}}(s)\right|\,ds.
\]

\end{example}

\section{Conclusions}

The classic polar decomposition of the deformation gradient is a broadly
employed tool in analyzing continuum deformation. Given the deformation
gradient, one obtains the polar rotation and stretch tensors from
algorithms based on straightforward linear algebra. Beyond computational
simplicity, polar rotation offers a powerful and rigorous tool to
identify a static rotational component of the linearized deformation
between fixed initial and final configurations.

Polar rotations computed over different time intervals, however, do
not have the fundamental additivity property of solid-body rotations.
As a consequence, polar rotation does not identify a mean material
rotation for volume elements which is nevertheless experimentally
observable in fluids (Shapiro \cite{Shapiro61}). Polar rotation also
suggests a mean angular velocity distribution that depends on the
length of the observation period, introducing an irremovable memory
effect into the deformation history on purely kinematic grounds (cf.
Appendix A). Finally, the evolution of the polar stretch tensor is
not free from spin. In summary, the static optimality of the polar
decomposition between two fixed configurations also comes with dynamic
sub-optimality for time-varying configurations.

To address these disadvantages, here we have extended the idea of
polar decomposition from a single linear mapping between two fixed
configurations to a time-dependent process. The resulting dynamic
polar decomposition (DPD) yields unique left and right factorizations
of $\mathbf{F}_{\tau}^{t}$ into the deformation gradient of a purely
rotating (strainless) deformation and the deformation gradient of
a purely straining (irrotational) deformation. The former deformation
gradient, the dynamic rotation tensor, is a dynamically consistent
rotation family. The latter deformation gradient, the (left) dynamic
stretch tensor, is objective, just as its classic polar left stretch
counterpart. The dynamic stretch tensors also reproduce the same Cauchy--Green
strains and principal strain directions between any two configurations,
as the classic polar stretch tensors do. Unlike the right polar stretch
tensor, however, the right dynamic stretch tensor is spin-free.

The DPD provides a previously missing mathematical link between the
deformation gradient and numerical algorithms that rotate the reference
frame incrementally at the spin rate (Hughes and Winget \cite{hughes80},
Rubinstein and Atluri \cite{rubinstein83}). The dynamic rotation
tensor arising from the DPD reproduces precisely the mean material
rotation rate of volume elements, as defined by Cauchy \cite{cauchy41}.
This mean rotation rate is directly observable in two-dimensional
fluids by placing a small spherical tracer in the flow (Shapiro \cite{Shapiro61}).
The same experiment cannot be carried out for solids. Any possible
experiment in solids, however, with an ability to measure the average
rotation rate of all fibers in a material volume element, necessarily
has to return the rate obtained from the DPD (cf. Proposition 1).

The DPD also provides new dynamic rotation angles for volume elements.
These angles represent dynamically consistent and simply computable
alternatives to Cauchy's classic mean rotation angle, whose evaluation
has been difficult using the classic polar decomposition (cf. Section
\ref{sub:Dynamically-consistent-material}). The dynamic rotation
angles also enable the extension of polar-rotation-based material
vortex detection in two-dimensional deformations (Farazmand and Haller
\cite{Farazmand14}) to DPD-based material vortex detection in three-dimensions
(Haller et al. \cite{Haller15}).

On the computational side, the DPD cannot be obtained from simple
linear algebraic manipulations on the single linear mapping $\mathbf{F}_{\tau}^{t}$,
as is the case for the classic polar decomposition. Instead, one has
to solve non-autonomous linear differential equations over the time
interval $[\tau,t]$ to obtain the DPD of $\mathbf{F}_{\tau}^{t}$.
On the upside, this also means that the dynamic rotation-stretch tensor
pair $(\mathbf{O}_{\tau}^{t},\mathbf{M}_{\tau}^{t})$ together satisfies
an explicit system of differential equations, i.e., form a dynamical
system that is free from memory effects. This is not the case for
their classic polar counterparts: $(\mathbf{R}_{\tau}^{t},\mathbf{U}_{\tau}^{t})$
satisfy an implicit, nonlinear system of differential equations, which
does not define a dynamical system and has unavoidable memory effects
(cf. Appendix A).

We believe that memory effects should enter models of the deformation
process in a controlled fashion, through parameters in the constitutive
equations, rather than in an uncontrolled and un-parametrized fashion,
through the rotational kinematics. For this reason, we consider the
intrinsic dynamic rotation rate $\dot{\psi}_{\tau}^{t}$, defined
in \eqref{eq:psidot}, a viable candidate for inclusion in constitutive
laws, given that it is simple, objective and memory-free. For two-dimensional
deformations, the rotation rate $\dot{\mathbf{\mathbf{\boldsymbol{\Phi}}}}_{\tau}^{t}\mathbf{\mathbf{\boldsymbol{\Phi}}}_{\tau}^{t}=\mathbf{W}-\bar{\mathbf{W}}$
of the relative rotation tensor can also be used, as it is objective
by eq. \eqref{eq:transfSOmega-3}.

Finally, we expect the DPD to be useful in experimental techniques
producing time-resolved deformation with large strains. An example
is the Digital Image Correlation (DIC) applied to granular materials,
where the classic polar rotation tensor has been used so far to identify
macroscopic rigid-body rotation components of the deformation field
(see, e.g., Rechenmacher et al. \cite{rechenmacher11}).

\vskip 0.5 true cm

\textbf{Acknowledgement} I acknowledge very helpful discussions with
Alexander Ehret, Mohammad Farazmand, Florian Huhn, Edoardo Mazza and
David Öttinger. I am also grateful for the insightful suggestions
of the two anonymous reviewers of this article.

\section{Appendix A: Polar rotations do not form a dynamical system}

We start by recalling the well-known temporal evolution of the deformation
gradient. Let us fix a material trajectory $\mathbf{x}(t)$, with
$\mathbf{x}(t_{0})=\mathbf{x}_{0}$. The deformation gradient along
this trajectory obeys the differential equation (cf. Example 1)
\begin{equation}
\dot{\mathbf{F}}_{\tau}^{t}=\mathbf{\nabla}\mathbf{v}(\mathbf{x}(t),t)\mathbf{F}_{\tau}^{t},\label{eq:FODE}
\end{equation}
where $\mathbf{v}(\mathbf{x},t)$ is the velocity field associated
with the deformation. The time $\tau\in[t_{0},t_{1}]$ is arbitrary,
labeling a reference configuration from which an observer follows
the deformation gradient up to time $t\in[t_{0},t_{1}].$ The solution
$\mathbf{F}_{\tau}^{t}$ of the differential equation \eqref{eq:FODE},
therefore, depends \emph{implicitly} on the start time $\tau$ of
the observation, without $\tau$ entering the differential equation
explicitly.

A deformation rate tensor (analogous to the polar rotation rate) can
also be defined for the deformation gradient as
\begin{equation}
\dot{\mathbf{F}}_{\tau}^{t}\left(\mathbf{F}_{\tau}^{t}\right)^{-1}=\mathbf{\nabla}\mathbf{v}(\mathbf{x}(t),t).\label{eq:defrate}
\end{equation}
This gives a well-defined deformation rate at the point $\mathbf{x}(t)$
of the deformed configuration at time $t$, independent of the initial
time $\tau$ at which the observer started monitoring the linearized
deformation along the trajectory $\mathbf{\mathbf{x}}(t)$. One may,
in particular, select the start time of the observation as $t=\tau$
and obtain the same rate $\dot{\mathbf{F}}_{t}^{t}\left(\mathbf{F}_{t}^{t}\right)^{-1}=\mathbf{\nabla}\mathbf{v}(\mathbf{\mathbf{x}}(t),t)$. 

We now show that this is not the case for the polar rotation rate.
The differential equation for the polar rotation tensor along the
trajectory $\mathbf{x}(t)$ is of the form
\begin{equation}
\dot{\mathbf{\mathbf{R}}}_{\tau}^{t}=\left[\mathbf{W}(\mathbf{x}(t),t)-\frac{1}{2}\mathbf{R}_{\tau}^{t}\left[\dot{\mathbf{U}}_{\tau}^{t}\left(\mathbf{U}_{\tau}^{t}\right)^{-1}-\left(\mathbf{U}_{\tau}^{t}\right)^{-1}\dot{\mathbf{U}}_{\tau}^{t}\right]\left(\mathbf{R}_{\tau}^{t}\right)^{T}\right]\mathbf{\mathbf{R}}_{\tau}^{t}.\label{eq:nonlinR}
\end{equation}
This gives the instantaneous rotation rate at the point $\mathbf{x}(t)$,
at time $t$, in the form 
\begin{equation}
\dot{\mathbf{\mathbf{R}}}_{\tau}^{t}\left(\mathbf{\mathbf{R}}_{\tau}^{t}\right)^{T}=\mathbf{W}(\mathbf{x}(t),t)-\frac{1}{2}\mathbf{R}_{\tau}^{t}\left[\dot{\mathbf{U}}_{\tau}^{t}\left(\mathbf{U}_{\tau}^{t}\right)^{-1}-\left(\mathbf{U}_{\tau}^{t}\right)^{-1}\dot{\mathbf{U}}_{\tau}^{t}\right]\left(\mathbf{R}_{\tau}^{t}\right)^{T}\label{eq:Rrate}
\end{equation}
for the observer monitoring the infinitesimal deformation along $\mathbf{x}(t)$
from the initial time $\tau$ up to the present time $t$. Note that
this rate depends \emph{explicitly} on the initial time $\tau$ of
observation through $\mathbf{U}_{\tau}^{t}$. In particular, for an
observation starting at time $t=\tau,$ we obtain $\dot{\mathbf{R}}_{t}^{t}\left(\mathbf{R}_{t}^{t}\right)^{-1}=\mathbf{W}(\mathbf{x}(t),t),$
which is quite different from \eqref{eq:Rrate} with $\tau\neq t.$
Therefore, the instantaneous polar rotation rate at a given location
and time is ill-defined when different start times for the observation
are allowed.

There is, in fact, a deeper effect at play here. Rather than examining
the rates $\dot{\mathbf{F}}_{\tau}^{t}\left(\mathbf{F}_{\tau}^{t}\right)^{-1}$
and $\dot{\mathbf{\mathbf{R}}}_{\tau}^{t}\left(\mathbf{\mathbf{R}}_{\tau}^{t}\right)^{T}$,
let us simply examine if the derivatives $\dot{\mathbf{F}}_{\tau}^{t}$
and $\dot{\mathbf{\mathbf{R}}}_{\tau}^{t}$ are independent of the
observational history. Note that the derivative of $\mathbf{F}_{\tau}^{t}$
in \eqref{eq:FODE} only depends on the current time $t$ and $\mathbf{F}_{\tau}^{t}$
itself. Thus, in the language of differential equations, \eqref{eq:FODE}
is a non-autonomous dynamical system (or a \emph{process}; cf. Dafermos
\cite{dafermos71}) for the deformation gradient $\mathbf{F}_{\tau}^{t}$,
with its future evolution fully determined by its present state. The
defining properties of a process, spelled out for the tensor family
$\mathbf{F}_{\tau}^{t}$, are 
\[
\mathbf{F}_{\tau}^{t}=\mathbf{F}_{s}^{t}\circ\mathbf{F}_{\tau}^{s},\qquad\mathbf{F}_{t}^{t}=\mathbf{I},\qquad\forall s,\tau,t\in[t_{0},t_{1}],
\]
with the circle denoting the composition of two functions. By the
linearity of \eqref{eq:FODE}, $\mathbf{F}_{\tau}^{t}$ is actually
a linear process, and hence we simply have $\mathbf{F}_{s}^{t}\circ\mathbf{F}_{\tau}^{s}=\mathbf{F}_{s}^{t}\mathbf{F}_{\tau}^{s}$.
The linearity of the dynamical system \eqref{eq:FODE}, however, plays
no role in our current argument.\footnote{An example of a nonlinear process is a general deformation field,
satisfying the nonlinear differential equation $\dot{\mathbf{X}}_{\tau}^{t}=\mathbf{v}(\mathbf{X}_{\tau}^{t},t).$
In this case, the nonlinear process properties take the form $\mathbf{X}_{\tau}^{t}=\mathbf{X}_{s}^{t}\circ\mathbf{X}_{\tau}^{s}$
and $\mathbf{X}_{t}^{t}=\mathbf{I,}$ for all $s,\tau,t\in[t_{0},t_{1}].$
Here the function composition cannot be replaced by a simple product.}

In contrast, the derivative of $\mathbf{R}_{\tau}^{t}$ in \eqref{eq:nonlinR}
depends on the current time $t$, on the tensor $\mathbf{R}_{\tau}^{t}$
itself, as well as on the initial time $\tau$ of the observation
through the quantity $\mathbf{U}_{\tau}^{t}$. As a consequence, the
nonlinear differential equation \eqref{eq:nonlinR} is \emph{not}
a dynamical system (or process), because its future evolution is not
determined fully by its present state, and hence 
\begin{equation}
\mathbf{R}_{\tau}^{t}\neq\mathbf{R}_{s}^{t}\circ\mathbf{R}_{\tau}^{s}\label{eq:nocomp}
\end{equation}
holds. Thus, in addition to not being a linear process by \eqref{eq:no_product},
the polar rotation tensor also fails to be a nonlinear process by
property \eqref{eq:nocomp}. Instead, $\mathbf{R}_{\tau}^{t}$ satisfies
a nonlinear differential equation with \emph{memory}. 

Even when considered together, the $(\mathbf{R}_{\tau}^{t},\mathbf{U}_{\tau}^{t})$
tensor pair does not satisfy an explicit system of differential equations.
Rather, the pair satisfies a nonlinear implicit system of differential
equations formed by \eqref{eq:matrotrate}-\eqref{eq:Ueq} (albeit
this system has no explicit dependence on $\tau$). As a consequence,
the pair $(\mathbf{R}_{\tau}^{t},\mathbf{U}_{\tau}^{t})$ generally
does not form a nonlinear dynamical system (or nonlinear process)
either, and hence displays explicit memory effects beyond the customary
implicit dependence on the reference configuration.

\section{Appendix B: Proof of Theorem \ref{theo:DPD}}

Substituting the decomposition in \eqref{eq:DPD} into \eqref{eq:A+-},
and imposing the requirement that $\mathbf{\mathbf{O}}_{\tau}^{t}$
is rotational and $\mathbf{\mathbf{M}}_{\tau}^{t}$ is irrotational
(cf. Definition \ref{def:rotational}), we obtain that 
\begin{eqnarray}
\mathbf{A}^{-}(t) & = & \dot{\mathbf{O}}_{\tau}^{t}\left[\mathbf{O}_{\tau}^{t}\right]^{T}+\frac{1}{2}\mathbf{O}_{\tau}^{t}\left[\dot{\mathbf{M}}_{\tau}^{t}\left[\mathbf{M}_{\tau}^{t}\right]^{-1}-\left[\dot{\mathbf{M}}_{\tau}^{t}\left[\mathbf{M}_{\tau}^{t}\right]^{-1}\right]^{T}\right]\left[\mathbf{O}_{\tau}^{t}\right]^{T}\nonumber \\
 & = & \dot{\mathbf{O}}_{\tau}^{t}\left[\mathbf{O}_{\tau}^{t}\right]^{T},\nonumber \\
\mathbf{A}^{+}(t) & = & \frac{1}{2}\left[\dot{\mathbf{O}}_{\tau}^{t}\left[\mathbf{O}_{\tau}^{t}\right]^{T}+\mathbf{O}_{\tau}^{t}\left[\dot{\mathbf{O}}_{\tau}^{t}\right]^{T}\right]+\frac{1}{2}\mathbf{O}_{\tau}^{t}\left[\dot{\mathbf{M}}_{\tau}^{t}\left[\mathbf{M}_{\tau}^{t}\right]^{-1}+\left[\dot{\mathbf{M}}_{\tau}^{t}\left[\mathbf{M}_{\tau}^{t}\right]^{-1}\right]^{T}\right]\left[\mathbf{O}_{\tau}^{t}\right]^{T}\nonumber \\
 & = & \frac{1}{2}\frac{d}{dt}\left[\mathbf{O}_{\tau}^{t}\left[\mathbf{O}_{\tau}^{t}\right]^{T}\right]+\frac{1}{2}\mathbf{O}_{\tau}^{t}\left[\dot{\mathbf{M}}_{\tau}^{t}\left[\mathbf{M}_{\tau}^{t}\right]^{-1}+\left[\dot{\mathbf{M}}_{\tau}^{t}\left[\mathbf{M}_{\tau}^{t}\right]^{-1}\right]^{T}\right]\left[\mathbf{O}_{\tau}^{t}\right]^{T}\nonumber \\
 & = & \frac{1}{2}\mathbf{O}_{\tau}^{t}\left[\dot{\mathbf{M}}_{\tau}^{t}\left[\mathbf{M}_{\tau}^{t}\right]^{-1}+\left[\dot{\mathbf{M}}_{\tau}^{t}\left[\mathbf{M}_{\tau}^{t}\right]^{-1}\right]^{T}\right]\left[\mathbf{O}_{\tau}^{t}\right]^{T}\nonumber \\
 & = & \mathbf{O}_{\tau}^{t}\dot{\mathbf{M}}_{\tau}^{t}\left[\mathbf{M}_{\tau}^{t}\right]^{-1}\left[\mathbf{O}_{\tau}^{t}\right]{}^{T}.\label{eq:A+-2}
\end{eqnarray}
 Expressing the derivatives of $\mathbf{\mathbf{O}}_{\tau}^{t}$ and
$\mathbf{\mathbf{M}}_{\tau}^{t}$ from \eqref{eq:A+-2} proves the
first two equations in \eqref{eq:ODEs-1}. We also note that
\[
\left[\mathbf{T}_{\tau}^{t}\right]^{T}\mathbf{T}_{\tau}^{t}=\left[\mathbf{M}_{\tau}^{t}\right]^{T}\left[\mathbf{O}_{\tau}^{t}\right]^{T}\mathbf{O}_{\tau}^{t}\mathbf{M}_{\tau}^{t}=\left[\mathbf{M}_{\tau}^{t}\right]^{T}\mathbf{M}_{\tau}^{t},
\]
and hence $\mathbf{T}_{\tau}^{t}$ and $\mathbf{\mathbf{M}}_{\tau}^{t}$
have the same singular values, as claimed.

Using the notation from eq. \eqref{eq:TODE-1} in eq. \eqref{eq:TODE},
we can further write
\[
\dot{\mathbf{T}}_{\tau}^{t}\mathbf{T}_{t}^{\tau}=\mathbf{A}(t)=\mathbf{A}^{-}(t)+\mathbf{A}^{+}(t),
\]
 with $\mathbf{A}^{\pm}(t)=\frac{1}{2}\left[\mathbf{A}(t)\pm\mathbf{A}^{T}(t)\right]$.
Therefore, $\mathbf{A}^{\pm}(t)$ are indeed independent of $\tau$
and hence $\mathbf{A}^{\pm}(\tau)$ are independent of $t,$ as already
suggested by our notation. 

From the now proven first equation of \eqref{eq:ODEs-1}, we conclude
that $\mathbf{\mathbf{O}}_{\tau}^{t}$ is indeed a linear process,
as the fundamental matrix solution of a classic non-autonomous system
of linear ODEs (with no explicit dependence on the initial time $\tau$).
We also conclude that $\mathbf{\mathbf{M}}_{\tau}^{t}$ is a two-parameter
family of nonsingular operators, even though it is generally not a
process. In particular, $\mathbf{\mathbf{M}}_{\tau}^{t}$ does not
form a process because the coefficient matrix of the second system
of ODEs in \eqref{eq:ODEs-1} has explicit dependence on the initial
time $\tau$. As a consequence, we generally have
\[
\left(\mathbf{M}_{t}^{\tau}\right)^{-1}\neq\mathbf{M}_{\tau}^{t}.
\]

To prove the left-polar decomposition involving $\mathbf{\mathbf{N}}_{\tau}^{t}$
in \eqref{eq:DPD}, we observe that 
\[
\mathbf{T}_{\tau}^{t}=\left[\mathbf{T}_{t}^{\tau}\right]^{-1}=\left[\mathbf{O}_{t}^{\tau}\mathbf{M}_{t}^{\tau}\right]^{-1}=\left(\mathbf{M}_{t}^{\tau}\right)^{-1}\left(\mathbf{O}_{t}^{\tau}\right)^{-1}=\left(\mathbf{M}_{t}^{\tau}\right)^{-1}\mathbf{O}_{\tau}^{t},
\]
thus setting 
\begin{equation}
\mathbf{N}_{\tau}^{t}=\left(\mathbf{M}_{t}^{\tau}\right)^{-1},\label{eq:inverse}
\end{equation}
we conclude the existence of $\mathbf{\mathbf{N}}_{\tau}^{t}$, as
claimed.  Interchanging the role of $\tau$ and $t$ in the second
equation of \eqref{eq:ODEs-1}, we obtain the differential equation
\begin{equation}
\frac{d}{d\tau}\mathbf{M}_{t}^{\tau}=\left[\mathbf{O}_{\tau}^{t}\mathbf{A}^{+}(\tau)\mathbf{O}_{t}^{\tau}\right]\mathbf{M}_{t}^{\tau},\qquad\mathbf{M}_{t}^{t}=\mathbf{I}.\label{eq:M^s_t}
\end{equation}
 By formula \eqref{eq:inverse}, we have 
\[
\left(\frac{d}{d\tau}\mathbf{N}_{\tau}^{t}\right)\mathbf{M}_{t}^{\tau}+\mathbf{N}_{\tau}^{t}\left(\frac{d}{d\tau}\mathbf{M}_{t}^{\tau}\right)=0,
\]
which together with \eqref{eq:M^s_t} yields 
\begin{equation}
\frac{d}{d\tau}\mathbf{N}_{\tau}^{t}=-\mathbf{N}_{\tau}^{t}\left[\mathbf{O}_{\tau}^{t}\mathbf{A}^{+}(\tau)\mathbf{O}_{t}^{\tau}\right],\qquad\mathbf{N}_{t}^{t}=\mathbf{I}.\label{eq:ODEs+}
\end{equation}
Taking the transpose of the expressions involved in the initial value
problem \eqref{eq:ODEs+} proves the last equation in \eqref{eq:ODEs-1}.
Finally, the uniqueness of both decompositions in \eqref{eq:DPD}
follows from the uniqueness of the solutions of the initial value
problems in \eqref{eq:ODEs-1}.

\section{Appendix C: Proof of Theorem \ref{theo:FDPD}}

Statements (i)-(iii) follow by a direct application of Theorem \ref{theo:DPD}
to the process $\mathbf{T}_{\tau}^{t}=\mathbf{\mathbf{F}}_{\tau}^{t}$.
To prove statement (iv), we apply the time-dependent coordinate change
\eqref{eq:objective} to the expression $\mathbf{\mathbf{x}}(t)=\mathbf{X}_{\tau}^{t}(\mathbf{\mathbf{x}}_{\tau})$
and obtain
\begin{equation}
\mathbf{y}(t)=\mathbf{Q}^{T}(t)\left[\mathbf{\mathbf{X}}_{\tau}^{t}(\mathbf{Q}(\tau)\mathbf{y}_{\tau}+\mathbf{b}(\tau))-\mathbf{b}(t)\right].\label{eq:ytransf}
\end{equation}
Differentiation of this equation with respect to $\mathbf{y}_{\tau}$
yields the transformed deformation gradient $\tilde{\mathbf{\mathbf{F}}}_{\tau}^{t}=\partial_{\mathbf{y}_{\tau}}\mathbf{y}(t)$
in the form 
\begin{equation}
\tilde{\mathbf{\mathbf{F}}}_{\tau}^{t}=\mathbf{Q}^{T}(t)\mathbf{\mathbf{F}}_{\tau}^{t}\mathbf{Q}(\tau),\label{eq:transformeddefgrad}
\end{equation}
showing that the deformation gradient tensor is not objective (cf.
Liu \cite{liu04}). Differentiating \eqref{eq:transformeddefgrad}
with respect to time, and first subtracting then adding the transpose
of the resulting equation, yields the transformed spin and rate-of-strain
tensors
\begin{equation}
\tilde{\mathbf{W}}(\mathbf{y},t)=\mathbf{Q}^{T}(t)\mathbf{W}(\mathbf{x},t)\mathbf{Q}(t)-\mathbf{Q}^{T}(t)\dot{\mathbf{Q}}(t),\qquad\tilde{\mathbf{D}}\left(\mathbf{y},t\right)=\mathbf{Q}^{T}(t)\mathbf{D}(\mathbf{x},t)\mathbf{Q}(t),\label{eq:transfSOmega}
\end{equation}
respectively, indicating that $\mathbf{\mathbf{W}}$ is not objective
but $\mathbf{D}$ is objective. 

Using the decomposition $\mathbf{\mathbf{F}}_{\tau}^{t}$ obtained
from statement (i) in the original $\mathbf{\mathbf{x}}$-frame, we
factorize the transformed deformation gradient \eqref{eq:transformeddefgrad}
as 
\begin{equation}
\tilde{\mathbf{\mathbf{F}}}_{\tau}^{t}=\mathbf{Q}^{T}(t)\mathbf{O}_{\tau}^{t}\mathbf{M}_{\tau}^{t}\mathbf{Q}(\tau)=\tilde{\mathbf{O}}_{\tau}^{t}\tilde{\mathbf{M}}_{\tau}^{t},\qquad\tilde{\mathbf{O}}_{\tau}^{t}=\mathbf{Q}^{T}(t)\mathbf{O}_{\tau}^{t}\mathbf{Q}(\tau),\qquad\tilde{\mathbf{M}}_{\tau}^{t}=\mathbf{Q}^{T}(\tau)\mathbf{M}_{\tau}^{t}\mathbf{Q}(\tau).\label{eq:transfDPD-1}
\end{equation}
We want to show that this factorization is in fact the unique DPD
of the transformed deformation gradient $\tilde{\mathbf{\mathbf{F}}}_{\tau}^{t}$. 

To this end, note that
\begin{eqnarray}
\dot{\tilde{\mathbf{O}}}_{\tau}^{t} & = & \mathbf{Q}^{T}(t)\mathbf{\dot{O}}_{\tau}^{t}\mathbf{Q}(\tau)+\dot{\mathbf{Q}}^{T}(t)\mathbf{O}_{\tau}^{t}\mathbf{Q}(\tau)\nonumber \\
 & = & \mathbf{Q}^{T}(t)\mathbf{W}(\mathbf{x}(t),t)\mathbf{O}_{\tau}^{t}\mathbf{Q}(\tau)+\dot{\mathbf{Q}}^{T}(t)\mathbf{O}_{\tau}^{t}\mathbf{Q}(\tau)\nonumber \\
 & = & \tilde{\mathbf{W}}(\mathbf{y}(t),t)\tilde{\mathbf{O}}_{\tau}^{t},\nonumber \\
\dot{\tilde{\mathbf{M}}}_{\tau}^{t} & = & \mathbf{Q}^{T}(\tau)\mathbf{\dot{M}}_{\tau}^{t}\mathbf{Q}(\tau)\nonumber \\
 & = & \mathbf{Q}^{T}(\tau)\mathbf{O}_{t}^{\tau}\mathbf{D}(\mathbf{x}(t),t)\mathbf{O}_{\tau}^{t}\mathbf{M}_{\tau}^{t}\mathbf{Q}(\tau)\nonumber \\
 & = & \left[\tilde{\mathbf{O}}_{t}^{\tau}\tilde{\mathbf{D}}\left(\mathbf{y}(t),t\right)\tilde{\mathbf{O}}_{\tau}^{t}\right]\tilde{\mathbf{M}}_{\tau}^{t},\label{eq:transfDPD}
\end{eqnarray}
where we have used the identity $\dot{\mathbf{\mathbf{Q}}}^{T}\mathbf{\mathbf{Q}}=-\mathbf{\mathbf{Q}}^{T}\dot{\mathbf{\mathbf{Q}}}$
and the formulas from \eqref{eq:transfSOmega}. Therefore, by \eqref{eq:transfDPD},
$\tilde{\mathbf{\mathbf{O}}}_{\tau}^{t}$ is a rotational process
and $\tilde{\mathbf{\mathbf{M}}}_{\tau}^{t}$ is an irrotational family
of operators. By the uniqueness of the DPD, we conclude that \eqref{eq:transfDPD-1}
indeed represent the unique dynamic polar decomposition of the transformed
deformation gradient $\tilde{\mathbf{\mathbf{F}}}_{\tau}^{t}$.  By
the relation $\tilde{\mathbf{\mathbf{M}}}_{\tau}^{t}=\mathbf{\mathbf{Q}}^{T}(\tau)\mathbf{\mathbf{M}}_{\tau}^{t}\mathbf{\mathbf{Q}}(\tau)$,
the transformed dynamic stretch tensor $\tilde{\mathbf{\mathbf{M}}}_{\tau}^{t}$
is related to its original counterpart $\mathbf{\mathbf{M}}_{\tau}^{t}$
through a similarity transformation, and hence all scalar invariants
of $\mathbf{\mathbf{M}}_{\tau}^{t}$ are preserved in the new frame.
We do not, however, have $\tilde{\mathbf{\mathbf{M}}}_{\tau}^{t}=\mathbf{\mathbf{Q}}^{T}(t)\mathbf{\mathbf{M}}_{\tau}^{t}\mathbf{\mathbf{Q}}(t)$
and hence $\mathbf{\mathbf{M}}_{\tau}^{t}$ is not objective. Finally,
rewriting the transformed deformation gradient as

\begin{equation}
\tilde{\mathbf{\mathbf{F}}}_{\tau}^{t}=\mathbf{Q}^{T}(t)\mathbf{N}_{\tau}^{t}\mathbf{O}_{\tau}^{t}\mathbf{Q}(\tau)=\tilde{\mathbf{N}}_{\tau}^{t}\tilde{\mathbf{O}}_{\tau}^{t},\qquad\tilde{\mathbf{N}}_{\tau}^{t}=\mathbf{Q}^{T}(t)\mathbf{N}_{\tau}^{t}\mathbf{Q}(t),\qquad\tilde{\mathbf{O}}_{\tau}^{t}=\mathbf{Q}^{T}(t)\mathbf{O}_{\tau}^{t}\mathbf{Q}(\tau),\label{eq:transfDPD-2}
\end{equation}
and repeating the rest of the above argument for the left dynamic
stretch tensor $\mathbf{\mathbf{N}}_{\tau}^{t}$ completes the proof
of statement (iv). Note that $\mathbf{N}_{\tau}^{t}$ is objective
by the second formula in \eqref{eq:transfDPD-2}.

\section{Appendix D: Proof of Theorem \ref{theo:relativerot}}

To prove the first decomposition in \eqref{eq:object_decomp}, we
write the rotation tensor $\mathbf{\mathbf{O}}_{\tau}^{t}$ in the
form $\mathbf{\mathbf{O}}_{\tau}^{t}=\mathbf{\boldsymbol{\Phi}}_{\tau}^{t}\mathbf{\boldsymbol{\Theta}}_{\tau}^{t}$,
with $\mathbf{\mathbf{\boldsymbol{\Phi}}}_{\tau}^{t}$,$\mathbf{\boldsymbol{\Theta}}_{\tau}^{t}$$\in SO(3)$,
$\mathbf{\mathbf{\boldsymbol{\Phi}}}_{\tau}^{\tau}=\mathbf{\boldsymbol{\Theta}}_{\tau}^{\tau}=\mathbf{I}$
yet to be determined. Differentiating this factorization with respect
to $t$ gives
\begin{equation}
\dot{\mathbf{O}}_{\tau}^{t}=\dot{\mathbf{\mathbf{\boldsymbol{\Phi}}}}_{\tau}^{t}\mathbf{\boldsymbol{\Theta}}_{\tau}^{t}+\mathbf{\mathbf{\boldsymbol{\Phi}}}_{\tau}^{t}\dot{\mathbf{\boldsymbol{\Theta}}}_{\tau}^{t}.\label{eq:objdecomp1}
\end{equation}
 At the same time, we also rewrite the ODE \eqref{eq:OODE} defining
$\mathbf{\mathbf{O}}_{\tau}^{t}$ in the form
\begin{equation}
\dot{\mathbf{O}}_{\tau}^{t}=\left[\mathbf{W}(\mathbf{x}(t),t)-\bar{\mathbf{W}}(t)\right]\mathbf{\mathbf{\boldsymbol{\Phi}}}_{\tau}^{t}\mathbf{\boldsymbol{\Theta}}_{\tau}^{t}+\bar{\mathbf{W}}(t)\mathbf{\mathbf{\boldsymbol{\Phi}}}_{\tau}^{t}\mathbf{\boldsymbol{\Theta}}_{\tau}^{t}.\label{eq:objdecomp2}
\end{equation}
Equating the first and second terms in the right-hand sides of \eqref{eq:objdecomp1}
and \eqref{eq:objdecomp2} leads to the initial value problems \eqref{eq:PHIODE}-\eqref{eq:THETAODE}
proving the uniqueness of the first decomposition in statement (i).
The relative rotation tensor is a rotational process, given that it
is the fundamental matrix solution of the classical system of ODEs
\eqref{eq:PHIODE}, whose skew-symmetric right-hand side has no explicit
dependence on the initial time $\tau$. By \eqref{eq:THETAODE}, the
mean-rotation tensor $\mathbf{\boldsymbol{\Theta}}_{\tau}^{t}$ forms
a rotational operator family. However, $\mathbf{\boldsymbol{\Theta}}_{\tau}^{t}$
is generally not a linear process, given the explicit dependence of
the right-hand side of \eqref{eq:THETAODE} on the initial time $\tau$.

To prove the second decomposition of $\mathbf{\mathbf{O}}_{\tau}^{t}$
in \eqref{eq:object_decomp}, we observe that 
\[
\mathbf{O}_{\tau}^{t}=\left[\mathbf{O}_{t}^{\tau}\right]^{T}=\left[\mathbf{\mathbf{\boldsymbol{\Phi}}}_{t}^{\tau}\mathbf{\boldsymbol{\Theta}}_{t}^{\tau}\right]^{T}=\left(\mathbf{\boldsymbol{\Theta}}_{t}^{\tau}\right)^{T}\left(\mathbf{\mathbf{\boldsymbol{\Phi}}}_{t}^{\tau}\right)^{T}=\left(\mathbf{\boldsymbol{\Theta}}_{t}^{\tau}\right)^{T}\mathbf{\mathbf{\boldsymbol{\Phi}}}_{\tau}^{t},
\]
thus setting 
\begin{equation}
\mathbf{\boldsymbol{\Sigma}}_{\tau}^{t}=\left(\mathbf{\boldsymbol{\Theta}}_{t}^{\tau}\right)^{T}\in SO(3),\label{eq:inverse-1}
\end{equation}
we recover the left mean rotation tensor $\mathbf{\boldsymbol{\Sigma}}_{\tau}^{t}$,
as claimed.  Interchanging the role of $\tau$ and $t$ in the second
equation of \eqref{eq:ODEs-1}, we find that 
\begin{equation}
\frac{d}{d\tau}\mathbf{\boldsymbol{\Theta}}_{t}^{\tau}=\left[\mathbf{\mathbf{\boldsymbol{\Phi}}}_{\tau}^{t}\bar{\mathbf{W}}(\tau)\mathbf{\mathbf{\boldsymbol{\Phi}}}_{t}^{\tau}\right]\mathbf{\boldsymbol{\Theta}}_{t}^{\tau},\qquad\mathbf{\boldsymbol{\Theta}}_{t}^{t}=\mathbf{I},\label{eq:M^s_t-1}
\end{equation}
thus, using formula \eqref{eq:inverse-1}, we obtain 
\[
\left(\frac{d}{d\tau}\mathbf{\boldsymbol{\Sigma}}_{\tau}^{t}\right)\mathbf{\boldsymbol{\Theta}}_{t}^{\tau}+\mathbf{\boldsymbol{\Sigma}}_{\tau}^{t}\left(\frac{d}{d\tau}\mathbf{\boldsymbol{\Theta}}_{t}^{\tau}\right)=0.
\]
This last equation together with \eqref{eq:M^s_t-1} gives the initial
value problem 
\begin{equation}
\frac{d}{d\tau}\mathbf{\boldsymbol{\Sigma}}_{\tau}^{t}=-\mathbf{\boldsymbol{\Sigma}}_{\tau}^{t}\left[\mathbf{\mathbf{\boldsymbol{\Phi}}}_{\tau}^{t}\bar{\mathbf{W}}(\tau)\mathbf{\mathbf{\boldsymbol{\Phi}}}_{t}^{\tau}\right],\qquad\mathbf{\boldsymbol{\Sigma}}_{t}^{t}=\mathbf{I}.\label{eq:ODEs+-1}
\end{equation}
Taking the transpose of \eqref{eq:ODEs+-1} proves the last equation
in \eqref{eq:SIGMAODE}. Again, the uniqueness of both decompositions
in \eqref{eq:object_decomp} follows from the uniqueness of solutions
to \eqref{eq:SIGMAODE}. Finally, $\mathbf{\boldsymbol{\Sigma}}_{\tau}^{t}$
is a rotational operator family, but not a process, as discussed already
for $\mathbf{\boldsymbol{\Theta}}_{t}^{\tau}$.

To prove the last statement of the theorem, we first change coordinates
under a general Euclidean transformation \eqref{eq:objective}, and
use tilde, as in the proof of Theorem \ref{theo:FDPD}, to denote
quantities in the $\mathbf{y}$ coordinate frame. We recall from formula
\eqref{eq:transfSOmega} the form of the transformed vorticity tensor
\begin{equation}
\tilde{\mathbf{W}}(\mathbf{y},t)=\mathbf{Q}^{T}(t)\mathbf{W}(\mathbf{x},t)\mathbf{Q}(t)-\mathbf{Q}^{T}(t)\dot{\mathbf{Q}}(t).\label{eq:transfSOmega-1}
\end{equation}
Taking the spatial mean of both sides in eq. \eqref{eq:transfSOmega-1}
over the body $\mathcal{B}(t)$, and noting that the transformation
\eqref{eq:objective} preserves the volume of $\mathcal{B}(t)$, we
obtain 
\begin{equation}
\bar{\tilde{\mathbf{W}}}(t)=\mathbf{Q}^{T}(t)\bar{\mathbf{W}}(t)\mathbf{Q}(t)-\mathbf{Q}^{T}(t)\dot{\mathbf{Q}}(t).\label{eq:transfSOmega-2}
\end{equation}
Subtracting \eqref{eq:transfSOmega-2} from \eqref{eq:transfSOmega-1}
gives
\begin{equation}
\tilde{\mathbf{W}}(\mathbf{y},t)-\bar{\tilde{\mathbf{W}}}(t)=\mathbf{Q}^{T}(t)\left[\mathbf{W}(\mathbf{x},t)-\mathbf{\bar{W}}(t)\right]\mathbf{Q}(t).\label{eq:transfSOmega-3}
\end{equation}

Next, using the decomposition of $\mathbf{\mathbf{O}}_{\tau}^{t}$
from statement (i) in the original $\mathbf{\mathbf{x}}$-frame, we
factorize the transformed dynamic rotation tensor $\tilde{\mathbf{\mathbf{O}}}_{\tau}^{t}$
obtained in eq. \eqref{eq:transfDPD-1} as 
\begin{equation}
\tilde{\mathbf{O}}_{\tau}^{t}=\tilde{\mathbf{\mathbf{\boldsymbol{\Phi}}}}_{\tau}^{t}\tilde{\mathbf{\boldsymbol{\Theta}}}_{\tau}^{t},\qquad\tilde{\mathbf{\mathbf{\boldsymbol{\Phi}}}}_{\tau}^{t}=\mathbf{Q}^{T}(t)\mathbf{\mathbf{\boldsymbol{\Phi}}}_{\tau}^{t}\mathbf{P}(t),\qquad\tilde{\mathbf{\boldsymbol{\Theta}}}_{\tau}^{t}=\mathbf{P}^{-1}(t)\mathbf{\boldsymbol{\Theta}}_{\tau}^{t}\mathbf{Q}(\tau),\label{eq:transfDPD-1-1}
\end{equation}
with the matrix $\mathbf{P}(t)$ to be determined in a way that \eqref{eq:transfDPD-1-1}
gives the unique relative-mean rotation decomposition of $\tilde{\mathbf{\mathbf{O}}}_{\tau}^{t}$
in the $\mathbf{y}$ coordinate frame. Both $\tilde{\mathbf{\mathbf{\boldsymbol{\Phi}}}}_{\tau}^{t}$
and $\mathbf{\mathbf{\boldsymbol{\Phi}}}_{\tau}^{t}$, as well as
$\tilde{\mathbf{\boldsymbol{\Theta}}}_{\tau}^{t}$ and $\mathbf{\boldsymbol{\Theta}}_{\tau}^{t}$,
are equal to the identity matrix at time $t=\tau,$ thus by \eqref{eq:transfDPD-1-1},
$\mathbf{P}(t)$ must necessarily satisfy
\begin{equation}
\mathbf{P}(\tau)=\mathbf{Q}(\tau).\label{eq:At0}
\end{equation}

To determine $\mathbf{P}(t)$, we differentiate the expression for
$\tilde{\mathbf{\mathbf{\boldsymbol{\Phi}}}}_{\tau}^{t}$ in \eqref{eq:transfDPD-1-1},
then use \eqref{eq:PHIODE} and \eqref{eq:transfSOmega-3} to obtain
\begin{eqnarray}
\dot{\tilde{\mathbf{\boldsymbol{\Phi}}}}_{\tau}^{t} & = & \dot{\mathbf{Q}}^{T}(t)\mathbf{\mathbf{\boldsymbol{\Phi}}}_{\tau}^{t}\mathbf{P}(t)+\mathbf{Q}^{T}(t)\mathbf{\dot{\mathbf{\boldsymbol{\Phi}}}}_{\tau}^{t}\mathbf{P}(t)+\mathbf{Q}^{T}(t)\mathbf{\mathbf{\boldsymbol{\Phi}}}_{\tau}^{t}\dot{\mathbf{P}}(t),\nonumber \\
 & = & \dot{\mathbf{Q}}^{T}(t)\mathbf{\mathbf{\boldsymbol{\Phi}}}_{\tau}^{t}\mathbf{P}(t)+\mathbf{Q}^{T}(t)\left[\mathbf{W}(\mathbf{x},t)-\bar{\mathbf{W}}(t)\right]\mathbf{\mathbf{\boldsymbol{\Phi}}}_{\tau}^{t}\mathbf{P}(t)+\mathbf{Q}^{T}(t)\mathbf{\mathbf{\boldsymbol{\Phi}}}_{\tau}^{t}\dot{\mathbf{P}}(t)\nonumber \\
 & = & \left[\tilde{\mathbf{W}}(\mathbf{y},t)-\bar{\tilde{\mathbf{W}}}(t)\right]\tilde{\mathbf{\mathbf{\boldsymbol{\Phi}}}}_{\tau}^{t}+\dot{\mathbf{Q}}^{T}(t)\mathbf{\mathbf{\boldsymbol{\Phi}}}_{\tau}^{t}\mathbf{P}(t)+\mathbf{Q}^{T}(t)\mathbf{\mathbf{\boldsymbol{\Phi}}}_{\tau}^{t}\dot{\mathbf{P}}(t).\label{eq:finobj1}
\end{eqnarray}
The transformed relative rotation tensor $\tilde{\mathbf{\mathbf{\boldsymbol{\Phi}}}}_{\tau}^{t}$
is defined by the equation $\dot{\tilde{\mathbf{\boldsymbol{\Phi}}}}_{\tau}^{t}=\left[\tilde{\mathbf{W}}(\mathbf{y},t)-\bar{\tilde{\mathbf{W}}}(t)\right]\tilde{\mathbf{\mathbf{\boldsymbol{\Phi}}}}_{\tau}^{t}$
in the $\mathbf{y}$ coordinates, therefore \eqref{eq:finobj1} implies
\[
\dot{\mathbf{Q}}^{T}(t)\mathbf{\mathbf{\boldsymbol{\Phi}}}_{\tau}^{t}\mathbf{P}(t)+\mathbf{Q}^{T}(t)\mathbf{\mathbf{\boldsymbol{\Phi}}}_{\tau}^{t}\dot{\mathbf{P}}(t)=\mathbf{0},
\]
or, equivalently, 
\begin{equation}
\dot{\mathbf{P}}(t)=\mathbf{\mathbf{\boldsymbol{\Phi}}}_{t}^{\tau}\dot{\mathbf{Q}}(t)\mathbf{Q}^{T}(t)\mathbf{\mathbf{\boldsymbol{\Phi}}}_{\tau}^{t}\mathbf{P}(t).\label{eq:Aform1}
\end{equation}
 This linear system of differential equations has a skew-symmetric
coefficient matrix, therefore $\mathbf{P}(t)$ is a proper orthogonal
matrix, and hence
\begin{equation}
\mathbf{P}^{-1}(t)=\mathbf{P}^{T}(t).\label{eq:Aform2}
\end{equation}

For two-dimensional deformations, the skew-symmetric tensor $\dot{\mathbf{\mathbf{Q}}}^{T}(t)\mathbf{\mathbf{Q}}(t)$
is always a scalar multiple of a rotation tensor, and hence commutes
with any other two-dimensional rotation tensor. Consequently, equation
\eqref{eq:Aform1} can be re-written as 
\begin{equation}
\dot{\mathbf{Q}}^{T}(t)\mathbf{\mathbf{\boldsymbol{\Phi}}}_{\tau}^{t}\mathbf{P}(t)+\mathbf{Q}^{T}(t)\mathbf{\mathbf{\boldsymbol{\Phi}}}_{\tau}^{t}\dot{\mathbf{P}}(t)=\mathbf{\mathbf{\boldsymbol{\Phi}}}_{\tau}^{t}\left[\dot{\mathbf{Q}}^{T}(t)\mathbf{P}(t)+\mathbf{Q}^{T}(t)\dot{\mathbf{P}}(t)\right]=\mathbf{\mathbf{\boldsymbol{\Phi}}}_{\tau}^{t}\frac{d}{dt}\left[\mathbf{Q}^{T}(t)\mathbf{P}(t)\right]=\mathbf{0},\label{eq:finobj2}
\end{equation}
implying that $\mathbf{Q}^{T}(t)\mathbf{P}(t)$ is a constant rotation.
Therefore, by \eqref{eq:At0}, we conclude from \eqref{eq:finobj2}
for two-dimensional deformations that $\mathbf{P}(t)\equiv\mathbf{Q}(t).$
Thus formula \eqref{eq:transfDPD-1-1} gives 
\[
\tilde{\mathbf{\mathbf{\boldsymbol{\Phi}}}}_{\tau}^{t}=\mathbf{Q}^{T}(t)\mathbf{\mathbf{\boldsymbol{\Phi}}}_{\tau}^{t}\mathbf{Q}(t),
\]
proving statement (ii) of Theorem \ref{theo:relativerot}.

\section{Appendix E: Fiber-averaged angular velocity of a rigid body}

Consider a perfectly rigid body $\mathcal{R}(t)$, with a well-defined
angular velocity vector $\mathbf{\boldsymbol{\nu}}_{rigid}(t)$ (see
Fig. \ref{fig:nu_min}a). 
\begin{figure}[H]
\begin{centering}
\includegraphics[width=0.7\textwidth]{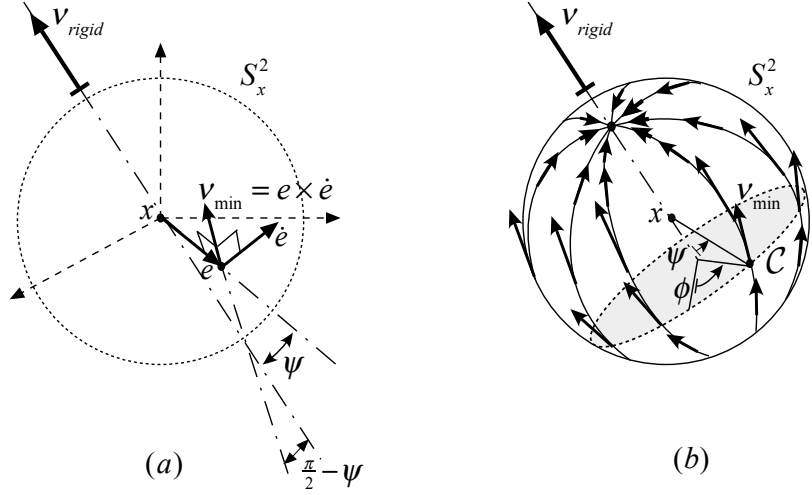}
\par\end{centering}

\caption{(a) The geometry of the minimal admissible angular velocity $\mathbf{\boldsymbol{\nu}}_{\min}\left(\mathbf{\mathbf{x}},t;\mathbf{e}\right)$
at a point $\mathbf{\mathbf{x}}$ and the actual angular velocity
$\mathbf{\boldsymbol{\nu}}_{rigid}(t)$ in case of an ideal rigid
body motion. (b) The radial components of the vector field $\mathbf{\boldsymbol{\nu}}_{\min}\left(\mathbf{\mathbf{x}},t;\mathbf{e}\right)$
along the circle $\mathcal{C}$ average out to zero, and hence only
the components normal to the plane of $\mathcal{C}$ contribute to
the average $\left\langle \mathbf{\boldsymbol{\nu}}_{\min}(\mathbf{\mathbf{x}},t,\mathbf{e})\right\rangle _{\mathbf{e}\in S_{\mathbf{x}}^{2}}$. }
\label{fig:nu_min}
\end{figure}
We seek to average $\mathbf{\boldsymbol{\nu}}_{\min}\left(\mathbf{\mathbf{x}},t;\mathbf{e}\right)$
over all vectors $\mathbf{e}(\phi,\psi)$ taken from the spherically
parametrized unit sphere $S_{\mathbf{\mathbf{x}}}^{2}$. Note the
cancellation of the averaged vector in radial directions normal to
$\mathbf{\boldsymbol{\nu}}_{rigid}(t)$ due to the circular symmetry
shown in Fig. \ref{fig:nu_min}b. Further note from Fig. \ref{fig:nu_min}a
that the projection of $\mathbf{\boldsymbol{\nu}}_{\min}\left(\mathbf{\mathbf{x}},t;\mathbf{e}\right)$
on the axis of rotation defined by $\mathbf{\boldsymbol{\nu}}_{rigid}(t)$
is 
\[
\left|\mathbf{\boldsymbol{\nu}}_{\min}\left(\mathbf{x},t;\mathbf{e}\right)\right|\sin\psi=\left|\mathbf{\boldsymbol{\nu}}_{rigid}(t)\right|\sin^{2}\psi.
\]
From these considerations, we obtain that the average of the vector
field $\mathbf{\boldsymbol{\nu}}_{\min}\left(\mathbf{\mathbf{x}},t;\mathbf{e}\right)$
over $S_{\mathbf{\mathbf{x}}}^{2}$ is 
\begin{eqnarray}
\left\langle \mathbf{\boldsymbol{\nu}}_{\min}(\mathbf{x},t,\mathbf{e})\right\rangle _{\mathbf{e}\in S_{\mathbf{x}}^{2}} & = & \frac{1}{2\pi}\intop_{0}^{2\pi}\left[\frac{1}{\pi}\intop_{0}^{\pi}\mathbf{\boldsymbol{\nu}}_{\min}\left(\mathbf{x},t;\mathbf{e}(\phi,\psi)\right)\,d\psi\right]d\phi=\frac{1}{\pi}\intop_{0}^{\pi}\mathbf{\boldsymbol{\nu}}_{rigid}(t)\sin^{2}\psi\,d\psi\label{eq:nurigid-1}\\
 & = & \frac{1}{2}\mathbf{\boldsymbol{\nu}}_{rigid}(t).
\end{eqnarray}
Therefore, for the material fiber-averaged angular velocity $\mathbf{\boldsymbol{\nu}}(t,\mathbf{\mathbf{x}})$
defined in \eqref{eq:nugeneral}, we obtain
\begin{equation}
\mathbf{\boldsymbol{\nu}}(t,\mathbf{x})=\mathbf{\boldsymbol{\nu}}_{rigid}(t)\label{eq:nu=00003Dnu_rigid}
\end{equation}
 in the case of a perfectly rigid body.

\section{Appendix F: Proof of Proposition \ref{prop:Fiber averaged 3D}}

In order to calculate the fiber-averaged angular velocity $\mathbf{\boldsymbol{\nu}}(\mathbf{\mathbf{x}},t)$
defined in \eqref{eq:nugeneral}, we first need a general expression
for the derivative $\mathbf{\dot{e}}(t)$ for an arbitrary unit vector
$\mathbf{e}(t)$ tangent to an evolving material fiber. Differentiating
the definition \eqref{eq:def_e(t)} of $\mathbf{e}(t)$ in time, and
using Example \ref{ex:The-deformation-gradient}, we obtain 
\[
\dot{\mathbf{e}}=\frac{\mathbf{\nabla v}(\mathbf{x}(t),t)\mathbf{\mathbf{F}}_{\tau}^{t}\mathbf{e}(\tau)-\mathbf{\mathbf{F}}_{\tau}^{t}\mathbf{e}(\tau)\frac{\left\langle \mathbf{\nabla v}(\mathbf{x}(t),t)\mathbf{\mathbf{F}}_{\tau}^{t}\mathbf{e}(\tau)\,,\,\mathbf{\mathbf{F}}_{\tau}^{t}\mathbf{e}(\tau)\right\rangle }{\left|\mathbf{\mathbf{F}}_{\tau}^{t}\mathbf{e}(\tau)\right|}}{\left|\mathbf{\mathbf{F}}_{\tau}^{t}\mathbf{e}(\tau)\right|^{2}}.
\]
 Setting $\tau$ equal to $t$ in this last equation and using formula
\eqref{eq:WDdef} gives
\begin{equation}
\dot{\mathbf{e}}=\left[\mathbf{W}+\mathbf{D}-\left\langle \mathbf{e},\mathbf{D}\mathbf{e}\right\rangle \mathbf{I}\right]\mathbf{e}.\label{eq:edotform}
\end{equation}
This equation is broadly known (see, e.g., Chadwick \cite{Chadwick76}),
and has only been re-derived here for completeness and notational
consistence. 

Taking the cross product of both sides with $\mathbf{e}$ and using
the definitions \eqref{eq:nu_min} and \eqref{eq:nugeneral}, we obtain
from \eqref{eq:edotform} the general expression 
\begin{eqnarray}
\mathbf{\boldsymbol{\nu}} & = & 2\left\langle \mathbf{e}\times\mathbf{W}\mathbf{e}\right\rangle _{\mathbf{e}\in S_{\mathbf{x}}^{2}}+2\left\langle \mathbf{e}\times\mathbf{De}\right\rangle _{\mathbf{e}\in S_{\mathbf{x}}^{2}}\nonumber \\
 & = & \frac{1}{2}\mathbf{\boldsymbol{\omega}}+2\left\langle \mathbf{e}\times\mathbf{D}\mathbf{e}\right\rangle _{\mathbf{e}\in S_{\mathbf{x}}^{2}},\label{eq:nuxt_average}
\end{eqnarray}
where we have applied the relationship \eqref{eq:nu=00003Dnu_rigid}
to the rigid body rotation generated by the angular velocity tensor
$\mathbf{\mathbf{W}}(\mathbf{\mathbf{x}},t)$ with angular velocity
$\mathbf{\boldsymbol{\nu}}_{rigid}(t)=\frac{1}{2}\boldsymbol{\omega}(\mathbf{\mathbf{x}},t)$. 

Let $\{\mathbf{b}_{i}(\mathbf{\mathbf{x}},t)\}_{i=1}^{3}$ denote
a positively oriented orthonormal basis for the rate-of-strain tensor
$\mathbf{D}(\mathbf{\mathbf{x}},t)$, with corresponding eigenvalues
$\sigma_{1}(\mathbf{\mathbf{x}},t)\leq\sigma_{2}(\mathbf{\mathbf{x}},t)\leq\sigma_{3}(\mathbf{\mathbf{x}},t).$
In this basis, the unit vector $\mathbf{e}$ has the classic spherical
coordinate representation (cf. Fig. \ref{fig:nu_min}b) 
\[
\mathbf{e}=\cos\psi\cos\phi\mathbf{b}_{1}+\cos\psi\sin\phi\mathbf{b}_{2}+\sin\psi\mathbf{b}_{3},
\]
from which we obtain
\[
\mathbf{e}\times\mathbf{De}=\frac{1}{2}\left(\sigma_{3}-\sigma_{2}\right)\sin2\psi\sin\phi\mathbf{b}_{1}+\frac{1}{2}\left(\sigma_{2}-\sigma_{3}\right)\sin2\psi\cos\phi\mathbf{b}_{2}+\frac{1}{2}\left(\sigma_{2}-\sigma_{1}\right)\sin2\phi\cos^{2}\phi\mathbf{b}_{3}.
\]
This shows that 
\[
\left\langle \mathbf{e}\times\mathbf{De}\right\rangle _{\mathbf{e}\in S_{\mathbf{x}}^{2}}=0,
\]
thus formula \eqref{eq:nuxt_average} simplifies to $\boldsymbol{\nu}(\mathbf{\mathbf{x}},t)=\frac{1}{2}\mathbf{\boldsymbol{\omega}}(\mathbf{\mathbf{x}},t)$,
proving the statement of Proposition \ref{prop:Fiber averaged 3D}.

\section{Appendix G: Proof of Theorem \ref{theo:dynamic angles}}

By Theorem 2, we have 
\begin{eqnarray*}
\mathbf{\dot{O}}_{\tau}^{t}\left[\mathbf{O}_{\tau}^{t}\right]^{T}\mathbf{e} & = & \mathbf{W}(\mathbf{x}(t),t)\mathbf{e=}-\frac{1}{2}\mathbf{\omega}(\mathbf{x}(t),t)\mathbf{\times e},\\
\mathbf{\dot{\Phi}}_{\tau}^{t}\left[\mathbf{\Phi}_{\tau}^{t}\right]^{T}\mathbf{e} & = & \left[\mathbf{W}(\mathbf{x}(t),t)-\mathbf{\bar{W}}(t)\right]\mathbf{e=}-\frac{1}{2}\left[\mathbf{\omega}(\mathbf{x}(t),t)-\bar{\mathbf{\omega}}(t)\right]\times\mathbf{e}.
\end{eqnarray*}
Therefore, 
\begin{eqnarray*}
\varphi_{\tau}^{t}(\mathbf{x}_{\tau};\mathbf{g}) & = & -\frac{1}{2}\int_{\tau}^{t}\mathbf{\boldsymbol{\omega}}(\mathbf{x}(s),s)\cdot\mathbf{g}(\mathbf{x}(s),s)\,ds\\
 & = & -\frac{1}{2}\int_{\sigma}^{t}\mathbf{\boldsymbol{\omega}}(\mathbf{x}(s),s)\cdot\mathbf{g}(\mathbf{x}(s),s)\,ds-\frac{1}{2}\int_{\tau}^{\sigma}\mathbf{\boldsymbol{\omega}}(\mathbf{x}(s),s)\cdot\mathbf{g}(\mathbf{x}(s),s)\,ds\\
 & = & \varphi_{\sigma}^{t}(\mathbf{x}_{\sigma};\mathbf{g})+\varphi_{\tau}^{\sigma}(\mathbf{x}_{\tau};\mathbf{g}),
\end{eqnarray*}
and similarly, 
\begin{equation}
\phi_{\tau}^{t}(\mathbf{x}_{\tau};\mathbf{g})=\phi_{\sigma}^{t}(\mathbf{x}_{\sigma};\mathbf{g})+\phi_{\tau}^{\sigma}(\mathbf{x}_{\tau};\mathbf{g}),\label{eq:addangle}
\end{equation}
proving the dynamical consistency of $\varphi_{\tau}^{t}$ and $\phi_{\tau}^{t}$,
and completing the proof of statement (i) of the theorem.

To complete the proof of statement (ii), we must prove the objectivity
of the relative dynamic rotation $\varphi_{\tau}^{t}(\mathbf{\mathbf{x}}_{\tau};\mathbf{g})$
under a Euclidean frame change of the form \eqref{eq:objective}.
As is well known (see., e.g., Truesdell \& Rajagopal \cite{Truesdell09}),
the transformed vorticity $\tilde{\mathbf{\boldsymbol{\omega}}}(\mathbf{y},t)$
is related to the original vorticity $\boldsymbol{\omega}(\mathbf{\mathbf{x}},t)$
through the formula 
\begin{equation}
\boldsymbol{\omega}(\mathbf{x},t)=\mathbf{Q}(t)\tilde{\boldsymbol{\omega}}(\mathbf{y},t)+\dot{\mathbf{q}}(t),\label{eq:vort trans}
\end{equation}
where the vector $\dot{\mathbf{q}}$ is defined via the identity $\frac{1}{2}\dot{\mathbf{q}}\times\tilde{\mathbf{a}}=\dot{\mathbf{\mathbf{Q}}}\mathbf{Q}^{T}\tilde{\mathbf{a}}$
for all $\tilde{\mathbf{a}}\in\mathbb{\mathbb{R}}^{3}$, accounting
for the additional vorticity introduced by the frame change. Taking
the spatial means of both sides in \eqref{eq:vort trans} over the
evolving continuum $\mathcal{B}(t)$ gives
\begin{equation}
\bar{\mathbf{\boldsymbol{\omega}}}(t)=\mathbf{Q}(t)\bar{\tilde{\mathbf{\boldsymbol{\omega}}}}(t)+\dot{\mathbf{q}}(t),\label{eq:vort trans plus}
\end{equation}
because the volume of $\mathcal{B}(t)$ remains constant under the
the Euclidean frame change \eqref{eq:objective}. Subtracting \eqref{eq:vort trans plus}
from \eqref{eq:vort trans}, we obtain that

\begin{equation}
\mathbf{\boldsymbol{\omega}}(\mathbf{x},t)-\bar{\mathbf{\boldsymbol{\omega}}}(t)=\mathbf{Q}(t)\left[\tilde{\mathbf{\boldsymbol{\omega}}}(\mathbf{y},t)-\bar{\tilde{\mathbf{\boldsymbol{\omega}}}}(t)\right].\label{eq: vort diff transf}
\end{equation}
The vector field $\mathbf{g}(\mathbf{\mathbf{x}},t)$ is transformed
under the frame change as
\begin{equation}
\tilde{\mathbf{g}}(\mathbf{y},t)=\mathbf{Q}^{T}(t)\mathbf{g}(\mathbf{x},t).\label{eq:transformed e}
\end{equation}
We observe that in the rotating frame, $\tilde{\mathbf{g}}$ is necessarily
time-dependent, even if $\mathbf{g}$ was originally chosen as a time-independent
constant direction. Using the formulas \eqref{eq: vort diff transf}
and \eqref{eq:transformed e}, we obtain that
\begin{eqnarray*}
\phi_{\tau}^{t}(\mathbf{x}_{\tau};\mathbf{g}) & = & -\frac{1}{2}\intop_{\tau}^{t}\left[\boldsymbol{\omega}(\mathbf{x}(s),s)-\bar{\mathbf{\boldsymbol{\omega}}}(s)\right]\cdot\mathbf{g}(\mathbf{x}(s),s)\,\,ds\\
 & =-\frac{1}{2} & \intop_{\tau}^{t}\mathbf{Q}(s)\left[\tilde{\mathbf{\boldsymbol{\omega}}}(\mathbf{y}(s),s)-\bar{\tilde{\mathbf{\boldsymbol{\omega}}}}(s)\right]\cdot\mathbf{Q}(s)\mathbf{g}(\mathbf{x}(s),s)\,\,ds,\\
 & = & -\frac{1}{2}\intop_{\tau}^{t}\left[\tilde{\mathbf{\boldsymbol{\omega}}}(\mathbf{y}(s),s)-\bar{\tilde{\mathbf{\boldsymbol{\omega}}}}(s)\right]\cdot\tilde{\mathbf{g}}(\mathbf{x}(s),s)\,\,ds\\
 & = & \phi_{\tau}^{t}(\mathbf{y}_{\tau};\tilde{\mathbf{g}}),
\end{eqnarray*}
which completes the proof of statement (ii) of the theorem. Statement
(iii) then follows by setting $\mathbf{g}=-\left(\mathbf{\omega}-\bar{\mathbf{\omega}}\right)/\left|\mathbf{\omega}-\bar{\mathbf{\omega}}\right|.$ 
\begin{rem}
\label{remark: angle argument for polar rotation}The argument leading
to \eqref{eq:addangle} would \emph{not} work for the polar rotation
angle. Indeed, the angular velocity $\dot{\mathbf{\boldsymbol{q}}}_{polar}(t,\tau)$
of the polar rotations inherits explicit dependence on $\tau$ from
$\dot{\mathbf{\mathbf{R}}}_{\tau}^{t}\left(\mathbf{\mathbf{R}}_{\tau}^{t}\right)^{T}$.
As a consequence, for the polar rotation angle defined as 
\[
\gamma_{\tau}^{t}(\mathbf{x}_{\tau};\mathbf{g})=\int_{\tau}^{t}\dot{\mathbf{\boldsymbol{q}}}_{polar}(s,\tau)\cdot\mathbf{g}(\mathbf{x}(s),s)\,ds,
\]
 we obtain 
\begin{eqnarray*}
\gamma_{\tau}^{t}(\mathbf{x}_{\tau};\mathbf{g}) & = & \int_{\sigma}^{t}\dot{\mathbf{\boldsymbol{q}}}_{polar}(s,\tau)\cdot\mathbf{g}(\mathbf{x}(s),s)\,ds+\int_{\tau}^{\sigma}\dot{\mathbf{\boldsymbol{q}}}_{polar}(s,\tau)\cdot\mathbf{g}(\mathbf{x}(s),s)\,ds\\
 & = & \int_{\sigma}^{t}\dot{\mathbf{\boldsymbol{q}}}_{polar}(s,\tau)\cdot\mathbf{g}(\mathbf{x}(s),s)\,ds+\gamma_{\tau}^{\sigma}(\mathbf{x}_{\tau};\mathbf{g})\\
 & \ne & \gamma_{\sigma}^{t}(\mathbf{x}_{\sigma};\mathbf{g})+\gamma_{\tau}^{\sigma}(\mathbf{x}_{\tau};\mathbf{g}),
\end{eqnarray*}
 because we generally have $\dot{\mathbf{\boldsymbol{q}}}_{polar}(s,\tau)\neq\dot{\mathbf{\boldsymbol{q}}}_{polar}(s,\sigma)$
for $\tau\neq\sigma.$\end{rem}

\end{document}